\newcommand\delb[1]{}
\def\todown{\searrow}
\numberwithin{equation}{section}
\newtheorem{theorem}{Theorem}[section]
\newtheorem{lemma}[theorem]{Lemma}
\newtheorem{corollary}[theorem]{Corollary}
\newtheorem{proposition}[theorem]{Proposition}
\newtheorem{definition}[theorem]{Definition}
\newtheorem{assumption}[theorem]{Assumption}
\theoremstyle{remark}
\newtheorem{remark}[theorem]{Remark}
\newcommand{\be}{\begin{equation}}
\newcommand{\ee}{\end{equation}}
\newcommand{\ba}{\begin{array}}
\newcommand{\ea}{\end{array}}
\newcommand{\beas}{\begin{eqnarray*}}
\newcommand{\eeas}{\end{eqnarray*}}
\newcommand{\bea}{\begin{eqnarray}}
\newcommand{\eea}{\end{eqnarray}}
\newcommand{\Leray}{{\mathrm{P}} \, }
\newcommand{\Curl}{{\mathrm{Curl}} \, }
\newcommand{\curln}{{\mathrm{curl}}\, } 
\newcommand{\curlb}{{\boldsymbol{\mathrm{curl}}}\, } 
\newcommand{\Div}{\mathrm{div} \, }
\newcommand{\Grad}{\nabla}
\newcommand{\DDelta}{\boldsymbol{\mathrm{\Delta}} \, }
\newcommand{\A}{\boldsymbol{\mathrm{A}}  }
\newcommand{\B}{\boldsymbol{\mathrm{B}}  }
\newcommand{\CC}{\boldsymbol{\mathrm{C}}  }
\newcommand{\bL}{\boldsymbol{\mathrm{L}}  }
\newcommand{\loc}{\mathrm{loc}}
\newcommand{\nnabla}{\boldsymbol{\mathrm{\nabla}} \, }
\newcommand{\ve}{\mathbf{e}}
\newcommand{\vf}{{\bf f}}
\newcommand{\vg}{{\bf g}}
\newcommand{\vu}{\mathbf{ u}}
\newcommand{\vv}{{\bf v}}
\newcommand{\vw}{{\bf w}}
\newcommand{\vx}{{\bf x}}
\newcommand{\vy}{{\bf y}}
\newcommand{\vY}{{\bf Y}}
\newcommand{\ovz}{{\bf z}}
\newcommand{\x}{\mathbf{x}}
\newcommand{\xb}{\mathbf{x}}
\newcommand{\yb}{\mathbf{y}}
\newcommand{\vphi}{{\bf \phi}}
\newcommand{\xh}{\widehat{\x}}
\newcommand{\vomega}{\boldsymbol{\mathrm{\omega}} \, }
\newcommand{\bbP}{\mathbb{P}}
\newcommand{\bS}{\mathbb{S}^2}
\newcommand{\calA}{\mathcal A}
\newcommand{\calB}{\mathcal B}
\newcommand{\calD}{\mathcal D}
\newcommand{\calF}{\mathcal F}
\newcommand{\calG}{\mathcal G}
\newcommand{\calH}{\mathcal H}
\newcommand{\calV}{\mathcal V}
\newcommand{\calL}{\mathcal L}
\newcommand{\calX}{\mathcal X}
\newcommand{\frakD}{\mathfrak D}
\newcommand{\frakR}{\mathfrak R}
\newcommand{\frakT}{\mathfrak T}
\newcommand{\tvu}{{\tilde{\vu}}}
\newcommand{\tvv}{{\tilde{\vv}}}
\newcommand{\hH}{\mathbb{H}}
\newcommand{\nN}{\mathbb{N}}
\newcommand{\E}{\mathbb{E}}
\newcommand{\R}{\mathbb{R}}
\newcommand{\zZ}{\mathbb{Z}}
\newcommand{\nk}{ {n^{(k)}} }
\newcommand\del[1]{}
\newcommand\delc[1]{} 
\newcommand\comc[1]{ {\color{red} \small{#1 }}} 
\title[Stochastic Navier--Stokes on spheres]
{Random attractors for the stochastic Navier--Stokes equations
on the 2D unit sphere}
\author{Z.~Brze\'{z}niak}
\address{%
Department of Mathematics \\
The University of York \\
Heslington, York, Y010 5DD \\
United Kingdom}
\email{zdzislaw.brzezniak@york.ac.uk}
\author{B.~Goldys}
\address{%
School of Mathematics and Statistics \\
The University of Sydney, NSW 2006 \\
Australia}
\email{beniamin.goldys@sydney.edu.au}
\author{Q. T. Le Gia}
\address{%
School of Mathematics and Statistics \\
University of New South Wales \\
Sydney, NSW 2052, Australia}
\email{qlegia@unsw.edu.au}
\thanks{This work was completed with the support of
Australian Research Council. The authors would like to thank
an anonymous referee for helpful comments to improve the
presentation of the paper.}
\subjclass{Primary 35B41; Secondary 35Q35}
\keywords{random attractors, energy method, asymptotically
compact random dynamical systems, stochastic Navier-Stokes, unit sphere}
\date{\today}
\begin{document}
\begin{abstract}
In this paper we prove the existence of random attractors for
the Navier--Stokes equations on 2 dimensional sphere under random forcing irregular
in space and time. We also deduce the existence of an invariant measure.
\end{abstract}
\maketitle
\section{Introduction}

Complex three dimensional flows in the atmosphere and oceans are modelled assuming that
the Earth's surface is an approximate sphere. Then it is natural to model the global
atmospheric circulation on Earth (and large
planets) using the Navier-Stokes equations (NSE)
on 2-dimensional sphere coupled to classical thermodynamics~\cite{atmosp_model_books}.
This approach is relevant for geophysical flow modeling.

Many authors have studied the deterministic NSEs on the unit sphere. Notably,
Il'in and Filatov \cite{IliFil89,Ili91} considered the existence and uniqueness of solutions
to these equations and estimated the Hausdorff
dimension of their global attractors \cite{Ili94}. Temam and Wang \cite{TemWan93}
considered the inertial forms of NSEs on sphere while
Temam and Ziane \cite{TemZia97}, see also \cite{Avez+Bamberger_1978},  proved that the
NSEs on a 2-dimensional sphere is a limit
of NSEs defined on a spherical shell \cite{TemZia97}. In other directions, Cao, Rammaha and
Titi \cite{CaoRamTit99} proved the Gevrey regularity of the solution and found an upper
bound on the asymptotic degrees of
freedom for the long-time dynamics.

Concerning the numerical simulation of the deterministic NSEs on
sphere, Fengler and Freeden \cite{FenFre05} obtained some impressive
numerical results using the spectral method, while the numerical analysis of a pseudo-
spectral method for these equations has been carried out in Ganesh, Le Gia and Sloan
in \cite{GanLeGSlo11}.

In our earlier paper \cite{BrzGolLeG15} we analysed
the Navier-Stokes equations on the 2-dimensional sphere with Gaussian random forcing. We proved the existence
and uniqueness of solutions and continuous dependence on data in various topologies.  We also studied
qualitative properties of the stochastic NSEs on the unit sphere in the context of random
dynamical systems.

Building on those preliminary studies, in the current  paper, we prove the existence of
random attractors for the stochastic NSEs on the 2-dimensional unit sphere.
Let us recall here that, given a probability space, a random
attractor is a compact random set, invariant for the associated
random dynamical system and attracting every bounded random set in its basis of
attraction (see Definition \ref{randomattractor}).

In the area of SPDEs the notions of random
and pullback attractors were introduced by
Brze{\'z}niak et al. in \cite{Brz+Cap+Fland_1993}, and  by Crauel and
Flandoli in \cite{Crauel+F_1994}. These concepts have been later used to obtain
crucial information on the asymptotic behaviour of random
(Brze{\'z}niak et al. \cite{Brz+Cap+Fland_1993}), stochastic
(Arnold \cite{Arnold_1998}, Crauel and Flandoli
\cite{Crauel+F_1994}, Crauel \cite{Crauel_1995},Flandoli and Schmalfuss \cite{Flandoli-Schmalfuss_1996}) and
non-autonomous PDEs (Schmalfuss \cite{Schmalfuss}, Kloeden and
Schmalfuss \cite{Kloeden}, Caraballo et al.
\cite{Caraballo_L_R_2006}).

We do not know if our system is dissipative in $H^1$. Therefore,  despite
the fact that the embedding $H^1 \hookrightarrow L^2$ is compact, the asymptotic
compactness approach seems to be the only method available in the $L^2$-setting
to yield the existence of an attractor, hence of an invariant measure.

The paper is organised as follows. In Section~\ref{sec:NSEs on S}, we recall the
relevant properties of the deterministic NSEs on the unit sphere,
outline key function spaces, and recall the weak formulation of these equations.
In Section~\ref{sec:sNSEs on S}, we define the stochastic NSEs on the unit sphere. The stochastic NSEs is decomposed into an Ornstein-Uhlenbeck process
and a deterministic NSEs with random forcing. First we construct a stationary
solution onf the Ornstein-Uhlenbeck process (associated with the linear part
of the stochastic NSEs) and then identify a shift-invariant subset of full
measure satisfying a strong law of large numbers.
We then review the key existence and uniqueness results obtained in \cite{BrzGolLeG15}.
In Section~\ref{sec:attractor} we prove the existence of a random attractor of
the stochastic NSEs on the 2-d sphere, which is the main result of the paper.
In doing so, we present Lemma~\ref{newlemma6_5}, which is a corrected
version of \cite[Lemma 6.5]{BrzGolLeG15}. Based on the asymptotic results in the lemma,
 a new class of functions $\mathfrak{R}$ is defined in Definition~\ref{def classR}. The class
$\mathfrak{DR}$ of all closed and bounded random sets is then defined using functions
in the class $\mathfrak{R}$.
The main results are given in Theorem~\ref{thm-main}, which asserts that the
random dynamical system generated $\varphi$ by the NSEs
on the unit sphere is $\mathfrak{DR}$-asymptotically compact. Hence, in view of a result
on existence of a random attractor (Theorem~\ref{Teorema1}), the existence of a random
attractor of $\varphi$ is deduced.

The paper
is concluded with a simple proof of the existence of an invariant measure and some comments
on the question of its uniqueness.
\par
In our paper a special attention is given to the noise with low space regularity. While many
works on random attractors consider only finite dimensional noise, we follow here the approach
from Brze{\'z}niak et al \cite{Brzetal10} and consider an infinite dimensional driving
Wiener process with minimal assumptions on its Cameron-Martin space
(known also as the Reproducing Kernel Hilbert Space),
see Remark \ref{rem-radon} and the Introduction to \cite{Brzetal10} for motivation.

\section{The Navier--Stokes equations on a rotating unit sphere}
\label{sec:NSEs on S}
The sphere is a very special example of a compact Riemannian manifold without boundary hence one
could recall all the classical tools from differential geometry developed for such manifolds.
However we have decided to follow a different path of using the polar coordinates and defining all such objects directly.

Our presentation here is a self-contained version of an analogous introductory
section from our companion paper \cite{BrzGolLeG15}. A reader who is familiar with the last reference
can skip reading this section.
\subsection{Preliminaries}
By $\bS$ we will denote the standard $2$-dimensional unit sphere,
i,e, a subset of the Euclidean space $\R^3$ described by
\begin{equation}\label{eqn-2.01}
 \bS=\{ \xb=(x_1,x_2,x_3) \in \R^3: \vert \xb \vert^2 = x_1^2+x_2^2 +x_3^2 = 1\}.
\end{equation}

Let us now define the surface gradient  $\nnabla$ and $\curlb$ operators acting on tangent vector fields and the surface gradient acting on scalar functions, all  with respect to scalar product in the tangent spaces $T_{\xb} \bS$ inherited from $\R^3$.

Suppose that $\vu$ and $\vv$ are two tangent vector fields on $\bS$ and $f: \bS \rightarrow \R$, all of $C^1$ class.   By some classical results,
see for instance \cite{DubNovFom86} or \cite[Definition 3.31]{Dri04},
there exist a neighbourhood $U$ of $\bS$ in $\R^3$ and vector fields $\tilde{\vu}:U\rightarrow \R^3$,
$\tilde{\vv}:U \rightarrow \R^3$  and a function $\tilde{f}:U \rightarrow \R^3$ such that $\tilde{\vu}|_{\bS} = \vu$, $\tilde{\vv}|_{\bS} = \vv$ and
 $\tilde{f}|_{\bS} = f$.
Then we put, for $\xb \in \bS$,
\begin{eqnarray}\label{eqn-nablauu}
  (\nnabla_\vv \vu)(\xb) &=& \pi_{\xb}
  \Big( \sum_{i=1}^3 \tilde{\vv}_i(\xb) \partial_i \tvu (\xb) \Big)=\pi_{\xb}
  \big(  ( \tilde{\vv}(\xb) \cdot\tilde{\nabla})  \tvu (\xb) \big),
\\
\label{eqn-curlb}
(\curlb \vu)(\xb) &=& (\mathrm{id} -\pi_{\xb})\big( (\tilde{\nabla}\times  \tvu) (\xb) \big)=\big( \xb \cdot  (\tilde{\nabla}\times  \tvu) (\xb) \big)\,\xb,
\\
\label{eqn-gradient f}
(\nabla f)(\xb) &=& \pi_{\xb}
  \big(  \tilde{\nabla} \tilde{f}  (\xb)\big),
\end{eqnarray}
where $\tilde{\nabla}$ is the gradient in $\R^3$ and, for $\xb \in \bS$,
the map $\pi_{\xb} : \R^3 \rightarrow T_{\xb} \bS$ is the orthogonal projection, i.e.
\begin{equation}\label{eqn-pi}
\pi_{\xb}:\R^3\ni \yb\mapsto \yb-(\xb \cdot \yb)\xb=-\xb \times( \xb \times \yb)\in  T_{{\xb}}\bS.
\end{equation}
Let us point out that the definitions of $(\nnabla_\vv \vu)$ and $\curlb \vu $ above are independent of the choice of the extensions $\tilde{\vu}$ and $\tilde{\vv}$.
In the former case, this can be shown either using a general approach from the references above or, as in our companion paper \cite{BrzGolLeG15} by exploiting
a  well known formula for the $\R^3$-vector product\footnote{
\[
\mathbf{a}\times(\mathbf{b}\times \mathbf{c})=( \mathbf{a} \cdot \mathbf{c})  \mathbf{b}- ( \mathbf{a} \cdot \mathbf{b}) \mathbf{c}, \;\; \mathbf{a},\mathbf{b}, \mathbf{c}\in \R^3.
\]}
to get
\begin{equation}\label{short_nonlin0}
  (\tvu \cdot \tilde{\nabla}) \tvu  =  \tilde{\nabla} \frac{|\tvu|^2}{2} - \tvu \times (\tilde{\nabla} \times \tvu).
\end{equation}
\delc{
It follows that, using the notation above, that the restrictions to $\bS$ of  $\tvu \times (\tilde{\nabla} \times \tvu)$ and
$\tilde{\tvu} \times (\tilde{\nabla} \times \tilde{\tvu})$ coincide.}

If follows from the definition \eqref{eqn-curlb} that
$\curlb \vu$ is a normal vector field on $\bS$, i.e. $\curlb \vu (\xb)\perp T_x\bS$ for every $\xb \in \bS$. Since the co-dimension of $T_{\xb}\bS$ in $\mathbb{R}^3$ is equal to $1$, this normal vector field can be  identified  with a scalar function on $\bS$ denoted by $\curln \vu$ by
\[
(\curln \vu (\xb)) \xb= [\curlb \vu](\xb), \;\;\; \xb \in\bS.
\]
Note that it follows that

\begin{equation} \label{equ:defcurln}
\curln \vu (\xb):= \xb \cdot  (\tilde{\nabla}\times  \tvu) (\xb) , \;\;\; \xb \in\bS.
\end{equation}

%
\begin{lemma}\label{lem-vector product}
If  $\tvu$ and $\tvv$ are $\mathbb{R}^3$-valued vector fields on  $\bS$, and $\vu$ and $\vv$ are tangent vector fields on  $\bS$, defined by    $\vu(\xb) = \pi_{\xb} (\tvu((\xb)))$ and $\tvv(\xb) = \pi_{\xb}(\tvv(\xb))$, $\xb \in {\bS}$, then
the following identity holds
\begin{equation}\label{equ:piuv}
\pi_{\xb} (\tvu(\xb) \times \tvv(\xb)) =
\vu(\xb) \times ( (\xb \cdot \vv(\xb)) \xb) +
(\xb \cdot \vu(\xb)) \xb \times \vv(\xb),\;\;\;    \;\; \xb\in \bS.
\end{equation}
\end{lemma}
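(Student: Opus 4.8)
The plan is to exploit the orthogonal decomposition of each ambient vector into its tangential and normal components at $\xb$, and then expand the cross product bilinearly. By the definition of the projection in \eqref{eqn-pi}, for every $\xb \in \bS$ one has
\[
\tvu(\xb) = \vu(\xb) + (\xb \cdot \tvu(\xb))\,\xb, \qquad \tvv(\xb) = \vv(\xb) + (\xb \cdot \tvv(\xb))\,\xb,
\]
where in each case the first summand lies in $T_{\xb}\bS$ and the second is normal to it. Note that since $\vu,\vv$ are tangent we have $\xb\cdot\vu(\xb)=\xb\cdot\vv(\xb)=0$, so for \eqref{equ:piuv} to be nontrivial the scalar coefficients on its right-hand side must be read as the normal components $\xb\cdot\tvu$ and $\xb\cdot\tvv$ of the ambient fields; I will prove the identity in this form.

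First I would substitute the two decompositions into $\tvu(\xb)\times\tvv(\xb)$ and expand into four terms using bilinearity of the cross product. The purely normal term is a scalar multiple of $\xb\times\xb=0$ and therefore drops out, leaving
\[
\tvu\times\tvv = \vu\times\vv + (\xb\cdot\tvv)\,(\vu\times\xb) + (\xb\cdot\tvu)\,(\xb\times\vv).
\]

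Next I would apply $\pi_{\xb}$ term by term, the key being to classify each summand as tangential or normal. Since $\vu(\xb),\vv(\xb)\in T_{\xb}\bS$ are both orthogonal to $\xb$, their cross product $\vu\times\vv$ is parallel to $\xb$, hence normal, so $\pi_{\xb}(\vu\times\vv)=0$. On the other hand each of $\vu\times\xb$ and $\xb\times\vv$ is orthogonal to $\xb$, hence already tangential, so $\pi_{\xb}$ leaves them unchanged. Collecting the two surviving terms gives
\[
\pi_{\xb}(\tvu\times\tvv) = (\xb\cdot\tvv)\,(\vu\times\xb) + (\xb\cdot\tvu)\,(\xb\times\vv),
\]
which is exactly \eqref{equ:piuv}.

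The only genuinely delicate step is the orthogonality bookkeeping in the projection: verifying that the cross product of two tangent vectors is normal while any cross product involving $\xb$ is tangential. Both facts are immediate from the defining property $\mathbf{a}\times\mathbf{b}\perp\mathbf{a},\mathbf{b}$, so I do not anticipate a real obstacle — once the orthogonal splitting is set up, the identity follows from a short bilinear expansion together with $\xb\times\xb=0$.
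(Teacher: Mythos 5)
Your proof is correct and follows essentially the same route as the paper's: decompose $\tvu(\xb)$ and $\tvv(\xb)$ into tangential and normal parts, expand the cross product bilinearly, and observe that the tangential--tangential term is normal (so it is killed by $\pi_{\xb}$) while the mixed terms are already tangential. Your remark that the scalar coefficients in \eqref{equ:piuv} must be read as $\xb\cdot\tvu$ and $\xb\cdot\tvv$ (otherwise the right-hand side vanishes identically) is a correct reading of a notational slip that is also present in the paper's own proof, where $\vu^{\perp}=(\vu\cdot\xb)\xb$ should be $(\tvu\cdot\xb)\xb$.
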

\begin{proof} Let us fix $\xb \in {\bS}$. Then
we can decompose vectors $\tvu=\tvu(\xb)$ and $\tvu=\tvv(\xb)$ into the tangential $\vu=\vu(\xb) \in T_{\xb} \bS$ and $\vv=\vv(\xb) \in T_{\xb} \bS$, and the normal
component as follows
\begin{align*}
  \tvu &= \vu + \vu^\perp \quad
  \text{ with }\delc{\vu \in T_{\xb} \bS,}
  \;\; \vu^\perp = (\vu \cdot \xb ) \xb \\
  \tvv &= \vv + \vv^\perp \quad
  \text{ with }\delc{\vv \in T_{\xb} \bS,}
  \;\; \vv^\perp = (\vv \cdot \xb ) \xb
\end{align*}
Thus, as $\vu \times \vv$ is normal to $T_{\xb} \bS$ so that  $\pi_{\xb}(\vu \times \vv) = 0$, and $\vu \times \vv^\perp, \vu^\perp \times \vv \in T_{\xb} \bS$,
 we infer that
\[
 \pi_{\xb}\big(\tvu \times \tvv\big) = \pi_{\xb}\big(\vu \times \vv + \vu \times \vv^\perp + \vu^\perp \times \vv\big) =\vu \times \vv^\perp + \vu^\perp \times \vv.
\]
 Hence the lemma is proved.
\end{proof}

Suppose now again that $\vu$ is a  tangent vector fields on $\bS$ and $\tilde{\vu}$ is a $\R^3$-valued vector field defined in some
 neighbourhood $U$ of $\bS$ in $\R^3$  such that $\tilde{\vu}|_{\bS} = \vu$. Applying formula \eqref{equ:piuv} to  the vector fields\footnote{(or rather their respective restrictions to the sphere $\bS$)} $\tilde{\vu}$ and
$\tvv = \tilde{\nabla} \times \tvu$, since also $(\tvu \cdot \xb)=0$ we get
\begin{eqnarray}\label{equ:conc1}
\pi_{\xb}(\tvu \times (\tilde{\nabla} \times \tvu))
&=& \vu \times ( (\xb \cdot (\tilde{\nabla} \times \tvu)) \xb)
 + (\tvu \cdot \xb) \xb \times (\tilde{\nabla} \times \tvu)
\\
\delc{With a tangential vector field $\vu$, since the normal component
is zero, \eqref{equ:conc1} is reduced to}
\label{equ:conc3}
\delc{\pi_{\xb}(\tvu \times (\tilde{\nabla} \times \tvu))}
&=& \vu \times \big( (\xb \cdot (\tilde{\nabla} \times \tvu)) \xb\big)
=(\xb \cdot (\tilde{\nabla} \times \tvu))\big(\vu \times  \xb \big), \;\;\; \xb \in\bS.
\end{eqnarray}

\delc{
\comc{I don't follow why we use a word "So" here. I would rather put here "Let". What do you think?}
So, for a tangential vector field $\vu$ on $\bS$, by $\curln \vu$ we will understand an $\mathbb{R}$-valued function defined by
\begin{equation*}
[\curln \vu](\xb) := \xb \cdot [(\tilde{\nabla} \times \tvu)(\xb)], \;\;\; \xb \in\bS.
\end{equation*}
}

Hence by formulae  \eqref{equ:conc3}, \eqref{equ:defcurln} and the above definitions
we obtain
\begin{equation}
  \pi_{\xb} [\tvu \times (\tilde{\nabla} \times \tvu)] (\xb)=  [\vu(\xb) \times \xb] \; {\curln}\vu(\xb) = \vu(\xb) \times \curlb \vu(\xb) \,   \;\; \xb \in \mathbb{S}^2, \label{eqn:star}
\end{equation}

Here, we use the following notation. Given another  tangential vector field $\vv$ on $\bS$, we will denote by $ \vv \times \curlb \vu$ a tangential vector field defined
as the $\mathbb{R}^3$ vector product of vectors,  tangent $ \vv$ and  normal $\curlb \vu$, i.e.
\begin{equation}
\label{eqn-vxcurlu}
 [\vv \times \curlb \vu](\xb):= \vv (\xb) \times \curlb \vu (\xb)), \;\;\; \xb \in\bS.
 \end{equation}
\delc{ Note that then we have
\[
 [\vv \times \curlb \vu](\xb) =  \vv (\xb) \times \xb \;(\curln \vu (\xb)), \;\;\; \xb \in\bS.
\]
}
 Thus from the above and \eqref{short_nonlin1} we infer that
\begin{equation}\label{short_nonlin1}
        \nnabla_\vu \vu = \Grad \frac{|\vu|^2}{2} - \vu \times {\curln}\vu.
\end{equation}

We will use the classical spherical coordinates to describe (in a non-unique way) the points on the sphere $\bS$

\begin{equation}\label{eqn-2.02}
  \xb =\xh(\theta,\phi) =(\sin\theta\cos\phi,\sin\theta\sin\phi,\cos\theta),
  \quad 0\le \theta \le \pi,\; 0\le \phi \leq 2\pi.
\end{equation}

With a bit of ambiguity, if $\xb =\xh(\theta,\phi)$ as in \eqref{eqn-2.02}, the
angles $\theta$ and $\phi$ will be denoted by $\theta(\xb)$ and $\phi(\xb)$,
or just $\theta$ and $\phi$.

For $(\theta,\phi) \in [0,\pi] \times [0,2\pi)$, by
$\ve_\theta=\ve_\theta(\theta,\phi)$
and $\ve_\phi=\ve_\phi(\theta,\phi)$ we will denote an orthonormal basis in the tangent
plane $T_{\xb} \bS$, where $\xb = \xh(\theta,\phi)$, defined by

\begin{equation}\label{eqn-2.03}
\ve_\theta = (\cos\theta\cos\phi,\cos\theta\sin\phi,-\sin\theta), \quad
\ve_\phi = (-\sin\phi,\cos\phi,0).
\end{equation}

If  $f: \bS \rightarrow \R$ is $C^1$ a class function,  then we can show that the surface gradient of $f$ has the following form
\begin{equation}\label{eqn-2.04}
 \Grad f =  \frac{\partial \hat{f}}{\partial \theta} \ve_\theta +
           \frac{1}{\sin \theta} \frac{\partial \hat{f}}{\partial \phi} \ve_\phi,
\end{equation}
where $\xb = \xh(\theta,\phi)$ and $\hat{f}: [0,\pi] \times [0,2\pi) \rightarrow \R$
is such that $f(\xh(\theta,\phi)) = \hat{f}(\theta,\phi)$ for all $(\theta,\phi)$.
In what follows, we will often not distinguish between functions $f$ and $\hat{f}$ and
use the notation $f$ for both.

Similarly, if $\vu$ is a (tangential) vector field on $\bS$ which can be written in
a form $\vu = (u_\theta,u_\phi)$ with respect to the (moving) basis $\ve_{\theta}, \ve_{\phi}$,
that is
\[
  \vu(\theta,\phi) = u_\theta(\theta,\phi) \ve_\theta(\theta,\phi)+u_\phi(\theta,\phi) \ve_\phi(\theta,\phi)
\]
we define the surface divergence with respect to the surface area on $\bS$ by the formula
\begin{equation}\label{eqn-2.05}
 \Div \vu = \frac{1}{\sin\theta} \left( \frac{\partial}{\partial \theta} (u_\theta \sin \theta) +
                                        \frac{\partial}{ \partial \phi} u_\phi \right).
\end{equation}

\delc{
Let us  note that $(\xb \cdot (\tilde{\nabla} \times \tvu)) \xb$ is just
the orthogonal projection of the vector $\tilde{\nabla} \times \tvu$ onto the normal component of $T_{\xb} \bS$. Furthermore, }
With slight abuse of notation, for $\xb\in \bS$,
\begin{equation}\label{equ:conc2}
\begin{aligned}
\curln \vu(\xb)=\xb \cdot (\tilde{\nabla} \times \tvu) &=
  x_1(\partial_2 u_3 - \partial_3 u_2)
+ x_2(\partial_3 u_1 - \partial_1 u_3)
+ x_3(\partial_1 u_2 - \partial_2 u_1)\\
&=\partial_1(x_3 u_2 - x_2 u_3) + \partial_2(x_1 u_3 - x_3 u_1)
+ \partial_3(x_2 u_1 - x_1 u_2) \\
& = \mathrm{div}(\tvu \times \xb) = - \mathrm{div}(\xb \times \tvu).
\end{aligned}
\end{equation}

Finally, see \cite{Ili91},
\delc{Assume that  $\vu$ is  a tangent vector field on $\bS$, and} if  $f$ a scalar function on  $\bS$, then
we define a tangent vector field $\Curl f$ by
\begin{equation}\label{Curl2}
[\Curl f](\xb) = -\xb \times \Grad f (\xb),  \quad
\delc{[\curln \vu](\xb) =  -[\Div(\xb \times \vu)](\xb),}  \;\; \xb\in \bS
\end{equation}

\delc{Moreover, given two tangential vector fields $\vu$ and $\vv$, the tangential vector field
\begin{equation}\label{eqn-v curl u}
[\vu(\xb) \times \xb]{\curln}\vv(\xb) \,, \;\; \xb\in \mathbb{S}^2
\end{equation}
 will be denoted by $\vu \times {\curln}\del{_{\xb}} \vv$.}

The surface diffusion operator acting on  tangential
vector fields  on $\bS$ is  denoted by $\DDelta$
(known as the vector Laplace-Beltrami or Laplace-de Rham operator)
and  is defined as
\begin{equation}\label{deRham}
 \DDelta \vu = \Grad \Div \vu - \Curl \curln \vu.
\end{equation}
Using \eqref{equ:conc2} and \eqref{Curl2}, one can derive the following relations connecting the above operators:
\begin{equation} \label{curln_curl_psi}
  \Div \Curl \psi  = 0, \qquad
  \curln \Curl \psi  = - \Delta \psi , \qquad
\DDelta \Curl \psi  = \Curl \Delta \psi .
\end{equation}

The Navier--Stokes equations (NSEs) for the evolution of the (tangential) velocity
vector field $\vu(t,\xb)=(u_\theta(t,\xb),u_\phi(t,\xb))$ on the 2-dimensional rotating
unit sphere $\bS$ under the influence of an external force $f(\xb) = (f_\theta,f_\phi)$ takes the following form
\cite{EbiMar70,Tay92}
\begin{equation}\label{NSE_sphere}  
\partial_t \vu + \nabla_{\vu} \vu - \nu \bL \vu +
  \vomega \times \vu + \frac{1}{\rho} \Grad p = \vf, \quad  \Div \vu = 0,
  \quad \vu(0,\cdot) = \vu_0.
\end{equation}

Let us describe the notations used above in more details.
Firstly, $\nu$ and $\rho$ are two positive constants which can be seen as simplified
physical constants called the viscosity and the density of the fluid.
The word ``rotational'' refers to the Coriolis acceleration $\vomega$ which is normal vector field defined  by
\begin{equation}\label{eqn-2.07}
\vomega=2\Omega\cos\big(\theta(\xb)\big) \xb, \quad \xb \in \bS,
\end{equation}
where $\Omega$ is a given constant. Note that if $\xb = (x_1,x_2,x_3)$ then $\theta(\xb)=\cos^{-1}(x_3)$.

In what follows we will identify the normal vector field $\vomega$ with the corresponding scalar function
$\omega$ defined by
\[
\omega(\xb)=2\Omega\cos\big(\theta(\xb)\big), \qquad \xb \in \bS.
\]

The operator
$\bL$ is given by \cite{Tay92}
\be\label{equ:Lop}
\bL = \DDelta + 2 \mbox{Ric},
\ee
where $\DDelta$ is the Laplace-de Rham operator, see \eqref{deRham},  and Ric denotes the Ricci
tensor of the two-dimensional sphere $\bS$. It is well known that
(see e.g. \cite[page 75]{Wal07})
\begin{equation}\label{equ:RicS}
 \mbox{Ric} = \left[\begin{array}{cc}
                            1  & 0 \\
                            0  & \sin^2\theta
                     \end{array} \right]
                     \end{equation}
We remark that in papers in \cite{CaoRamTit99,Ili91,IliFil89,TemWan93} the authors consider NSEs with  $\bL = \DDelta$
but the analysis in our paper are still valid in that case.
\par

\subsection{Function spaces on the sphere}
In what follows we denote by $dS$ the Lebesgue integration with respect to the surface measure (or the volume measure when $\bS$ is seen as
a Riemannian manifold). In the spherical coordinates we have,
locally, $dS = \sin \theta d\theta d\phi$. For $p\in[1,\infty)$ we will use the notation $L^p=L^p(\bS)$ for the space $L^p\left(\mathbb S^2,\mathbb R\right)$ of $p$-integrable scalar functions
on $\bS$ endowed with the norm
\[\|v\|_{L^p}=\left(\int_{\bS}|v({\vx})|^p\,dS(\vx)\right)^{1/p}.\]
For $p=2$ the corresponding inner product is denoted by
\[\left( v_1,v_2\right)=\left( v_1,v_2\right)_{L^2\left(\bS\right)}=\int_{\bS} v_1v_2\, dS.\]
We will denote by $\mathbb{L}^p=\mathbb{L}^p(\bS)$ the space $L^p\left(\bS,T\bS\right)$ of vector fields $\vv:\bS\to T\bS$
endowed with the norm
\[\|\vv\|_{\mathbb{L}^p}=\left(\int_{\bS}
|\vv({\vx})|^p\,dS(\vx)\right)^{1/p},\]
where, for ${\vx}\in \bS$,  $|\vv(\vx)|$ stands for the length of $\vv(\vx)$ in the tangent space ${T_{\vx}\bS}$. For $p=2$ the corresponding inner product is denoted by
\[\left(\vv_1,\vv_2\right)=\left(\vv_1,\vv_2\right)_{\mathbb{L}^2}=\int_{\bS}\vv_1\cdot\vv_2(S)\,dS.\]
Throughout the paper, the  induced norm on  $\mathbb{L}^2(\bS)$ is
denoted by $\| \cdot \|$ and for other inner product spaces,
say $X$ with inner product $(\cdot,\;\cdot )_{X}$, the associated norm is
denoted by  $\| \cdot \|_X$.

We have the following identities for appropriate scalar and
vector fields~\cite[(2.4)-(2.6)]{Ili91}:
\begin{eqnarray}
(\Grad \psi,\; \vv) &=& - (\psi,\; \Div \vv), \label{ip_identities0} \\
(\Curl \psi,\;\vv)  &=& (\psi,\; \curln \vv), \label{ip_identities1} \\
(\Curl \curln \vw,\; \ovz) &=& (\curln \vw,\; \curln \ovz).
  \label{ip_identities2}
\end{eqnarray}
In \eqref{ip_identities1}, the  $\mathbb{L}^2(\bS)$ inner
product is used on the left hand side and the $L^2(\bS)$ inner product
is used on the right hand side.
We now introduce Sobolev spaces $H^s(\bS)=H^{s,2}(\bS)$ and $\hH^s(\bS)=\hH^{s,2}(\bS)$ of scalar functions and
vector fields on $\bS$ respectively.

Let $\psi$ be a scalar function and let $\vu$ be a vector field on $\bS$, respectively.
For $s\ge 0$ we define
\begin{equation}\label{def:Hs}
  \|\psi\|^2_{H^s(\bS)} = \|\psi\|^2_{L^2(\bS)} + \| (-\Delta)^{s/2} \psi\|^2_{L^2(\bS)},
\end{equation}
and
\begin{equation}\label{def:hHs}
  \|\vu\|^2_{\hH^s(\bS)} = \|\vu\|^2 + \|(-\DDelta)^{s/2} \vu\|^2,
\end{equation}
where $\Delta$ is the Laplace--Beltrami and $\DDelta$ is the Laplace--de Rham operator on the sphere. In particular, for $s=1$,
\begin{align}
\|\vu\|^2_{\hH^1(\bS)}
     &= \|\vu\|^2 + (\vu,-\DDelta \vu)  \nonumber\\
     &= \|\vu\|^2 + \|\Div \vu\|^2 + \| \Curl\del{_{\xb}} \vu\|^2, \label{equ:H1norm}
\end{align}
where we have used formulas
\eqref{deRham},\eqref{ip_identities0}--\eqref{ip_identities2}.

We note that for $k=0,1,2,\ldots$ and $\theta \in (0,1)$ the space $H^{k+\theta}(\bS)$ can
be defined as the interpolation space between $H^k(\bS)$ and $H^{k+1}(\bS)$. We can apply the same procedure for $\hH^{k+\theta}(\bS)$.

One has the following Poincar\'{e} inequality
\cite[Lemma 2]{IliFil89}
\begin{equation}\label{poincare}
  \lambda_1  \|\vu\| \le \|\Div \vu\| + \|\Curl\del{_{\xb}} \vu \|, \quad \vu \in \hH^1(\bS),
\end{equation}
for some positive constant $\lambda_1$.

The space of smooth ($C^\infty$) tangential fields on $\bS$
can be decomposed into three components, one in the space of all
divergence-free fields and the others through the
Hodge decomposition theorem~\cite[Theorem 1.72]{Aub98}:
\begin{equation} \label{Hodge}
  C^\infty(T\bS) =   \calG \oplus  \calV \oplus \calH,
\end{equation}
where
\begin{equation} \label{orth_space}
   \calG = \{\Grad \psi: \psi \in C^\infty(\bS)\},\quad
   \calV = \{\Curl \psi: \psi \in C^\infty(\bS)\},
\end{equation}
while $\calH$ is the finite-dimensional space of harmonic fields,
i.e. $\calH$ contains all the vector fields $\vv$ so that
$\Curl\del{_{\xb}}(\vv) = \Div(\vv) = 0$. Since
the two dimensional sphere is simply connected, $\calH = \{0\}$~\cite[page 80]{Schwarz95}.
We introduce the following spaces
\beas
    H &=& \mbox{ closure of } \calV
          \mbox{ in } \mathbb{L}^2(\bS), \\
    V &=& \mbox{ closure of } \calV
          \mbox{ in } \hH^1(\bS).
\eeas
Since $V$ is densely and continuously embedded
into $H$ and $H$ can be identified
with its dual $H^\prime$, we have the following imbeddings:
\begin{equation}\label{Gelfand triple}
 V \subset H \cong H^\prime \subset V^\prime.
\end{equation}
We say that the spaces $V,H$ and $V^\prime$ form a Gelfand triple.

\subsection{The weak formulation}
We consider the linear Stokes problem
\begin{equation}\label{Stokes}
\nu \Curl {\curln} \vu - 2\nu\mbox{Ric} (\vu) + \Grad p = \vf, \quad \Div \vu = 0.
\end{equation}
By taking the inner product of the first equation of \eqref{Stokes} with $\vv\in V$
and then using \eqref{ip_identities2}, we obtain
\begin{equation}\label{weakStokes}
 \nu (\curln\vu,\curln\vv) - 2\nu(\mbox{Ric\;}\vu,\vv) = (\vf,\vv)  \quad \forall \vv \in V.
\end{equation}
Next, we define a bilinear form $a: V\times V \rightarrow \R$ by
\[
  a(\vu,\vv) :=
  \nu(\curln\vu,{\curln}\del{_{\xb}}\vv) - 2\nu(\mbox{Ric\;}\vu,\vv),\quad \vu,\vv \in V.
\]
In view of \eqref{equ:H1norm} and \eqref{equ:RicS}, the bilinear form
$a$ satisfies
\[
  a(\vu,\vv) \le  \|\vu\|_{\hH^1} \|\vv\|_{\hH^1},
\]
and hence it is continuous on $V$. So by the Riesz Lemma, there exists a unique
operator $\calA:~V \rightarrow V^\prime$,
where $V^\prime$ is the dual of $V$, such that $a(\vu,\vv) = (\calA \vu,\vv)$, for
$\vu, \vv \in V$. Using the Poincar\'{e} inequality \eqref{poincare},
we also have $a(\vu,\vu) \ge \alpha \|\vu\|^2_V$,
with $\alpha = \lambda_1 - 2\nu$, which means $a$
is coercive in $V$ whenever $\lambda_1 > 2\nu$. In practice,
usually one has $\lambda_1 \gg 2\nu$. Hence by the Lax-Milgram
theorem the operator $\calA : V \rightarrow V^\prime$ is an isomorphism. Furthermore,
by using \cite[Theorem 2.2.3]{Tan79}, we conclude that the operator $\calA$ is
positive definite, self-adjoint in $H$ and $\calD(\calA^{1/2}) = V$.



Next we define an operator $\A$ in $H$ as follows:
\begin{equation}\label{defA}
\left\{ \begin{array}{lcl}
            \calD(\A) &:=& \{\vu \in V: \calA\vu \in H \}, \\
            \A \vu    &:=& \calA \vu, \quad \vu \in \calD(\A).
        \end{array} \right.
\end{equation}

Let $\Leray$ be the Leray orthogonal projection from $\mathbb{L}^2(\bS)$ onto $H$.
It can be shown \cite{Gri00} that $\calD(\A) = \hH^2(\bS) \cap V$ and
$\A = -\Leray (\DDelta + 2\mbox{Ric})$,
and
$\A^\ast = \A$. It can also be shown that
$V=\calD(\A^{1/2})$ and
\[
 \|\vu\|^2_V \sim (\A \vu,\vu), \quad \vu \in \calD(\A),
\]
where $A \sim B$ indicates that there are two positive constants $c_1$ and $c_2$
such that $c_1 A \le B \le c_2 A$.


We consider the trilinear form $b$ on  $V \times  V \times V$,
defined as
\begin{equation}\label{trilinear}
b(\vv,\vw,\ovz) = (\nnabla_\vv \vw,  \ovz) =  \int_{{\bS}} \nnabla_\vv \vw \cdot
\ovz~dS, \qquad \vv, \vw, \ovz \in  V.
\end{equation}
Using the following identity
\begin{align}\label{covar_long}
       2 \nnabla_\vw \vv =
       & -{\curln}(\vw\times \vv)+\Grad(\vw\cdot \vv)-\vv\, \Div \vw+ \\
   &\vw \, \Div \vv - \vv\times{\curln} \vw
   - \vw \times {\curln} \vv.
\end{align}
and (\ref{ip_identities1}), for divergence free
tangential vector
fields $\vv, \vw, \ovz$,  the trilinear
form can be written as
\be
b(\vv,\vw,\ovz) =  \frac{1}{2} \int_{\bS}
\left[-\vv \times \vw \cdot \curln {\ovz}  +
         {\curln} \vv \times \vw \cdot {\ovz} - \vv \times {\curln} \vw \cdot {\ovz}\right]~dS.
     \label{short_b}
\ee
Moreover~\cite[Lemma 2.1]{Ili91}
\be \label{skew}
   b(\vv,\vw,\vw)=0, \qquad b(\vv,\ovz,\vw) = -b(\vv,\vw,\ovz)
          \qquad \vv \in V, \vw, \ovz \in \hH^1(\bS).
\ee

The Coriolis operator $\CC_1:\mathbb L^2\left(\bS\right)\to\mathbb L^2\left(\bS\right)$, is defined by the formula
\[
(\CC_1\vv)(\xb) = (2 \Omega \cos \theta(\xb))\xb \times \vv(\xb),\quad \xb \in \bS .
\]
Clearly, $\CC_1$ is linear and bounded in $\mathbb L^2\left(\bS\right)$. In the sequel we will need the operator $\CC=\Leray \CC_1$ which is well defined and bounded in $H$. Furthermore, for $\vu\in H$
\be\label{Cuu}
  (\CC\vu, \vu) = \left(\CC_1{\vu},\Leray \vu\right)=\int_{\bS}2 \Omega \cos \theta(\xb)\big((\xb \times \vu)\cdot\vu(\xb)\big)\,
  d{S}(\xb) = 0.
\ee

Using (\ref{deRham}),  (\ref{ip_identities1}), (\ref{defA}), and (\ref{short_b}),
a \emph{weak solution} of the Navier-Stokes equations (\ref{NSE_sphere}) is a function
$\vu \in L^2([0,T];V)$ with $\vu(0)= \vu_0$ that satisfies the weak form of equation~\eqref{NSE_sphere},
i.e.
\begin{equation} \label{weak_form}
  (\partial_t \vu,\vv) + b(\vu,\vu,\vv) + \nu({\curln} \vu, {\curln} \vv) - 2\nu (\mbox{Ric\;}\vu,\vv)
  + (\CC\vu, \vv)  = (\vf,\vv), \qquad  \vv \in  V.
\end{equation}
This weak formulation can be written in operator equation form on $V^\prime$, the dual of $V$.
Let $\vf \in L^2([0,T];V^\prime)$ and $\vu_0 \in H$. We want to
find a function  $\vu \in L^2([0,T]; V)$, with
$\partial_t \vu \in L^2([0,T];V^\prime)$
such that
\begin{equation}\label{op_form}
   \partial_t \vu + \nu \A\vu + \B(\vu,\vu) + \CC\vu = \vf, \qquad \vu(0) = \vu_0,
\end{equation}
where the bilinear form $\B:V\times V \rightarrow V^\prime$ is defined by
\begin{equation}\label{B}
   (\B(\vu,\vv),\vw) = b(\vu,\vv,\vw) \qquad  \vw \in V.
\end{equation}
With a slight abuse of notation, we also denote $\B(\vu) = \B(\vu,\vu)$.


The following are some fundamental properties of the trilinear form $b$;
see \cite{FenFre05}: there exists a constant $C>0$ such that
\be\label{b_estimate}
|b(\vu,\vv,\vw)| \le C
 \begin{cases}
   \|\vu\|^{1/2} \|\vu\|^{1/2}_V \|\vv\|^{1/2}_V \|\A\vv\|^{1/2} \|\vw\|,
     \quad \vu \in V, \vv\in \calD(\A), \vw \in H,\\
   \|\vu\|^{1/2} \|\A \vu\|^{1/2} \|\vv\|_V \|\vw\|,
   \quad \vu \in \calD(\A), \vv \in V, \vw\in H, \\
   \|\vu\|^{1/2} \|\vu\|^{1/2}_V \|\vv\|_V \|\vw\|^{1/2} \|\vw\|_V^{1/2},
   \quad \vu,\vv,\vw \in V.
 \end{cases}
\ee

We also need the following estimates:
\begin{lemma}\cite[Lemma 2.2]{BrzGolLeG15}
There exists a positive constant $C$ such that
\be\label{b_estimate1}
|b(\vu,\vv,\vw)| \le C \|\vu\| \|\vw\|
 (\|\curln \vv\|_{{\mathbb{L}^{\infty}}} +
 \|\vv\|_{{\mathbb{L}^\infty}}),
\quad \vu \in H, \vv \in V, \vv \in H,
\ee
and
\be\label{b_estimate2}
|b(\vu,\vv,\vw)| \le C \|\vu\| \|\vv\|_V \|\vw\|^{1/2} \|\A\vw\|^{1/2},
\quad \vu \in H, \vv \in V, \vw \in D(\A).
\ee
and
\be\label{b_estimate3}
|b(\vu,\vv,\vw)| \le C \|\vu\|_{\mathbb{L}^4} \| \vv \|_V \|\vw\|_{\mathbb{L}^4},
\quad \vv \in V, \vu,\vw \in \hH^1(\bS).
\ee
\end{lemma}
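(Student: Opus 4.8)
My plan is to derive all three inequalities from the representation $b(\vu,\vv,\vw)=(\nnabla_{\vu}\vv,\vw)=\int_{\bS}\nnabla_{\vu}\vv\cdot\vw\,dS$ of \eqref{trilinear}, combined with the pointwise bound $|\nnabla_{\vu}\vv(\xb)|\le |\vu(\xb)|\,|\nabla\vv(\xb)|$, in which $\nabla\vv$ denotes the full covariant derivative of $\vv$ and $|\nabla\vv|$ its pointwise norm. Two standard facts on $\bS$ will be invoked repeatedly. First, since the Laplace--de Rham operator is elliptic, the $\hH^1$-norm \eqref{equ:H1norm} controls the full gradient, so $\|\nabla\vv\|_{\mathbb{L}^2}\le C\|\vv\|_{\hH^1}\le C\|\vv\|_V$ for $\vv\in V$. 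Second, the two-dimensional Agmon inequality $\|\vw\|_{\mathbb{L}^\infty}\le C\|\vw\|^{1/2}\|\A\vw\|^{1/2}$ holds for $\vw\in\calD(\A)$, obtained by interpolating $\hH^2(\bS)\hookrightarrow\mathbb{L}^\infty$ and using the spectral gap of $\A$.

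For \eqref{b_estimate3} I would bound $|b(\vu,\vv,\vw)|\le\int_{\bS}|\vu|\,|\nabla\vv|\,|\vw|\,dS$ and apply H\"older's inequality with exponents $(4,2,4)$, which gives $C\|\vu\|_{\mathbb{L}^4}\|\nabla\vv\|_{\mathbb{L}^2}\|\vw\|_{\mathbb{L}^4}$; the middle factor is absorbed into $\|\vv\|_V$ by the elliptic bound. For \eqref{b_estimate2} I would instead use H\"older with exponents $(2,2,\infty)$ to obtain $C\|\vu\|\,\|\nabla\vv\|_{\mathbb{L}^2}\,\|\vw\|_{\mathbb{L}^\infty}$, again replace $\|\nabla\vv\|_{\mathbb{L}^2}$ by $\|\vv\|_V$, and finish with the Agmon inequality applied to $\|\vw\|_{\mathbb{L}^\infty}$. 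Both of these are routine once the pointwise bound and the two auxiliary facts are in place.

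The estimate \eqref{b_estimate1} is the genuinely delicate one, and I expect it to be the main obstacle. Here $\vu,\vw$ lie only in $H$, so no derivative may be allowed to fall on them, yet the right-hand side controls $\vv$ only through $\curln\vv$ and $\vv$, not through $\nabla\vv$; consequently the crude bound used above is too lossy, since $\|\nabla\vv\|_{\mathbb{L}^\infty}$ is \emph{not} controlled by $\|\curln\vv\|_{\mathbb{L}^\infty}+\|\vv\|_{\mathbb{L}^\infty}$ for divergence-free $\vv$. I would therefore start from the structural identity \eqref{short_b}, which after relabelling reads $2b(\vu,\vv,\vw)=\int_{\bS}[-\vu\times\vv\cdot\curln\vw+\curln\vu\times\vv\cdot\vw-\vu\times\curln\vv\cdot\vw]\,dS$. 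The last term is already of the required form: using $\vu\times\curlb\vv=(\curln\vv)(\vu\times\xb)$ and $|\vu\times\xb|=|\vu|$ on $\bS$, it is bounded by $\|\curln\vv\|_{\mathbb{L}^\infty}\|\vu\|\|\vw\|$.

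The crux is the first two terms, which superficially involve $\curln\vu$ and $\curln\vw$, and hence an $\hH^1$-regularity of $\vu$ and $\vw$ that we do not have. My plan is to remove these curls by integrating by parts through \eqref{ip_identities1}: identifying the normal field $\vu\times\vv$ with the scalar $\sigma=(\vu\times\vv)\cdot\xb$, the first term equals $(\sigma,\curln\vw)=(\Curl\sigma,\vw)$, and the second is handled symmetrically, the aim being to confine the surviving derivatives to the smooth factor $\vv$ so that only $\curln\vv$ and $\Grad\vv$ appear. The delicate point --- and where I expect the real work to lie --- is to verify that the triple vector products on $\bS$ collapse so that, after these integrations by parts, $\vu$ and $\vw$ survive only in $\mathbb{L}^2$ while all derivatives land on $\vv$ and are expressible through $\|\curln\vv\|_{\mathbb{L}^\infty}+\|\vv\|_{\mathbb{L}^\infty}$; a natural tool here is the stream-function representation $\vv=\Curl\psi$ (available since $\calH=\{0\}$ on $\bS$, giving $\curln\vv=-\Delta\psi$). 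The final bound would then follow from H\"older's inequality and, if needed, the Poincar\'e inequality \eqref{poincare}.
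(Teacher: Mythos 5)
This lemma is not proved in the present paper at all---it is imported verbatim from \cite[Lemma 2.2]{BrzGolLeG15}---so there is no internal proof to compare against; I can only assess your argument on its own merits. Your treatment of \eqref{b_estimate2} and \eqref{b_estimate3} is fine: the pointwise bound $|\nnabla_\vu\vv|\le|\vu|\,|\nabla\vv|$, H\"older with exponents $(2,2,\infty)$ resp.\ $(4,2,4)$, the elliptic bound $\|\nabla\vv\|_{\mathbb{L}^2}\le C\|\vv\|_V$, and the two-dimensional Agmon inequality $\|\vw\|_{\mathbb{L}^\infty}\le C\|\vw\|^{1/2}\|\A\vw\|^{1/2}$ on $\calD(\A)$ do give those two estimates, and you have correctly diagnosed that \eqref{b_estimate1} is the only nontrivial assertion.

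For \eqref{b_estimate1}, however, your plan has a genuine gap, and it sits exactly at the point you flag as ``where the real work lies.'' The integration by parts you propose goes the wrong way: writing the first term of \eqref{short_b} as $(\sigma,\curln\vw)=(\Curl\sigma,\vw)$ with $\sigma=(\vu\times\vv)\cdot\xb$ produces $\Curl\sigma=-\xb\times\Grad\sigma$, and by the product rule $\Grad\sigma$ contains $\Grad\vu$; symmetrically, removing $\curln\vu$ from the second term deposits a derivative on $\vw$. So the derivatives land on the two factors that are only in $H$, not on $\vv$, and no relabelling of \eqref{short_b} or use of the antisymmetry \eqref{skew} avoids this. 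More structurally: decomposing $\Grad\vv$ into its antisymmetric part (which is $\curln\vv$ and is controlled in $\mathbb{L}^\infty$) and its symmetric deformation part $D(\vv)$, what survives after all cancellations is the symmetric quantity $\tfrac12\bigl(b(\vu,\vv,\vw)+b(\vw,\vv,\vu)\bigr)=\int_{\bS}\vu\cdot D(\vv)\,\vw\,dS$; for divergence-free $\vv$ the entries of $D(\vv)$ are second Riesz transforms of $\curln\vv$, and these are \emph{not} bounded in $\mathbb{L}^\infty$ by $\|\curln\vv\|_{\mathbb{L}^\infty}+\|\vv\|_{\mathbb{L}^\infty}$. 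Hence the ``collapse of the triple vector products'' you hope for does not occur by elementary vector algebra and integration by parts alone: with $\vu,\vw$ merely in $H$ this term requires an additional input (extra regularity on $\vu$ or $\vw$, or a compensated-compactness/Hardy-space argument for the quadratic expression $\vu\otimes\vw$ built from two divergence-free $\mathbb{L}^2$ fields). As written, your proposal establishes only the third term of \eqref{short_b} and leaves the first two unbounded, so \eqref{b_estimate1} is not proved; you should either supply the missing cancellation explicitly or fall back on the proof in \cite{BrzGolLeG15}.
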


In view of \eqref{b_estimate3}, $b$ is a bounded trilinear map from
$\mathbb{L}^4(\bS) \times V \times \mathbb{L}^4(\bS)$ to $\R$. Moreover, we have the following
result:

\begin{lemma}\label{lem:bL4}
The trilinear map $b:V \times V \times V \rightarrow \R$ has a unique
extension to a bounded trilinear map from $\mathbb{L}^4(\bS) \cap H \times \mathbb{L}^4(\bS) \times V$
to $\R$.
\end{lemma}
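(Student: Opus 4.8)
The plan is to combine the single estimate \eqref{b_estimate3} with the skew--symmetry relation \eqref{skew}, and then to pass to the closure by continuity. The only quantitative input available is \eqref{b_estimate3}, which controls $b$ by $\|\vu\|_{\mathbb{L}^4}\|\vv\|_V\|\vw\|_{\mathbb{L}^4}$ \emph{when the middle slot carries the $V$-norm}. Since the assertion to be proved instead places the $V$-norm on the \emph{third} slot, the first thing I would do is use the antisymmetry $b(\vu,\vv,\vw)=-b(\vu,\vw,\vv)$ from \eqref{skew} to interchange the last two arguments. Applying \eqref{b_estimate3} to $b(\vu,\vw,\vv)$ then gives, for $\vu,\vv,\vw\in V$,
\[
|b(\vu,\vv,\vw)|=|b(\vu,\vw,\vv)|\le C\,\|\vu\|_{\mathbb{L}^4}\,\|\vw\|_V\,\|\vv\|_{\mathbb{L}^4},
\]
which is exactly the bound wanted on the target space $(\mathbb{L}^4(\bS)\cap H)\times\mathbb{L}^4(\bS)\times V$.

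With this estimate in hand, I would produce the extension by the standard bounded--multilinear--extension argument, carried out one slot at a time rather than all at once. First, \eqref{b_estimate3} extends $b$ by continuity in its first and third arguments from $\hH^1(\bS)$ to $\mathbb{L}^4(\bS)$ --- here one uses that smooth fields, hence $\hH^1(\bS)$, are dense in $\mathbb{L}^4(\bS)$ --- yielding a bounded trilinear form $b_1$ on $\mathbb{L}^4(\bS)\times V\times\mathbb{L}^4(\bS)$ whose middle slot still carries the $V$-norm; this $b_1$ is unique because density holds in precisely the two slots being completed. I would then \emph{define} the desired extension by
\[
\tilde b(\vu,\vv,\vw):=-\,b_1(\vu,\vw,\vv),\qquad \vu\in\mathbb{L}^4(\bS)\cap H,\ \vv\in\mathbb{L}^4(\bS),\ \vw\in V,
\]
which is manifestly bounded by $C\|\vu\|_{\mathbb{L}^4}\|\vv\|_{\mathbb{L}^4}\|\vw\|_V$ and, by \eqref{skew}, agrees with $b$ on $V\times V\times V$. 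Uniqueness of $\tilde b$ then reduces to the (genuine) uniqueness of $b_1$.

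The conceptual obstacle --- and the reason the first slot must be intersected with $H$ --- is that the original form is given only on divergence--free fields, whereas the middle slot of the target is \emph{all} of $\mathbb{L}^4(\bS)$, a space in which $V$ is not dense, so a naive three--fold density argument is unavailable. The antisymmetry \eqref{skew} is exactly what circumvents this: it moves the rough factor into the slot where it is merely multiplied (not differentiated), so that its dependence is captured through the $\mathbb{L}^{4/3}$--$\mathbb{L}^4$ duality $\tilde b(\vu,\vv,\vw)=-(\nnabla_\vu\vw,\vv)$ and no density of $V$ in $\mathbb{L}^4(\bS)$ is needed. But \eqref{skew} is an integration by parts valid only when the first argument is divergence--free, which forces the restriction $\vu\in\mathbb{L}^4(\bS)\cap H$; this is the step I would treat most carefully, verifying that the antisymmetry persists after the first argument is completed from $V$ to $\mathbb{L}^4(\bS)\cap H$, which in turn rests on the density of $V$ in $\mathbb{L}^4(\bS)\cap H$ guaranteed by the Hodge decomposition \eqref{Hodge}--\eqref{orth_space} and the triviality of $\calH$.
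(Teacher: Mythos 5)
The paper states Lemma~\ref{lem:bL4} without proof (it is imported from \cite{BrzGolLeG15}), but the surrounding text makes the intended argument clear: \eqref{b_estimate3} shows $b$ is bounded on $\mathbb{L}^4(\bS)\times V\times\mathbb{L}^4(\bS)$, and the skew-symmetry \eqref{skew} transfers this to the stated domain. Your proposal is exactly this argument, carried out carefully, so you are on the intended route; in particular your observation that the antisymmetry requires the first slot to be divergence-free, which is why it is intersected with $H$, is the right explanation of the statement's shape.

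One caveat on the uniqueness claim. Your $b_1$ is indeed the unique bounded extension of $b$ from $\hH^1(\bS)\times V\times\hH^1(\bS)$ to $\mathbb{L}^4(\bS)\times V\times\mathbb{L}^4(\bS)$, since $\hH^1(\bS)$ is dense in $\mathbb{L}^4(\bS)$. But the sentence ``uniqueness of $\tilde b$ then reduces to the uniqueness of $b_1$'' does not quite close the argument as the lemma is phrased: an arbitrary bounded trilinear extension $T$ of $b\vert_{V\times V\times V}$ to $(\mathbb{L}^4(\bS)\cap H)\times\mathbb{L}^4(\bS)\times V$ is only pinned down by continuity on the closure of $V$ in the middle slot, and $V$ is \emph{not} dense in $\mathbb{L}^4(\bS)$, so one cannot conclude $T(\vu,\vv,\vw)=-b_1(\vu,\vw,\vv)$ for all $\vv\in\mathbb{L}^4(\bS)$. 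The uniqueness is genuine only if one regards the form being extended as $b$ on $V\times\hH^1(\bS)\times V$ (which the paper implicitly licenses via \eqref{skew} and \eqref{trilinear}); with that reading your two-step density argument is complete. This is a defect of the lemma's wording rather than of your construction, but it is worth stating the a priori domain of $b$ explicitly before invoking uniqueness.
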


It can be seen from \eqref{b_estimate3} that $b$ is a bounded trilinear map
from $\mathbb{L}^4(\bS)~\times~V~\times~\mathbb{L}^4(\bS)$ to $\R$. It follows that $\B$ maps
$\mathbb{L}^4(\bS) \cap H$ (and so $V$) into $V^\prime$ and
by using the  following inequality from \cite[page 12]{IliFil89}
\be\label{equ:L4}
 \|\vu\|_{\mathbb{L}^4} \le C \|\vu\|^{1/2} \|\vu\|_V^{1/2},
 \quad \vu \in \hH^1(\bS),
\ee
we have
\be\label{equ:Bu L4}
 \|\B(\vu)\|_{V^\prime} \le C_1 \|\vu\|^2_{\mathbb{L}^4} \le  C_2 \|\vu\| \|\vu\|_V
                 \le C_3 \|\vu\|^2_V, \quad \vu \in V.
\ee
\section{The stochastic Navier--Stokes equations on a rotating unit sphere}
\label{sec:sNSEs on S}

\subsection{Preliminaries}

Let us recall that for a real separable Hilbert space  $K$ and a real separable Banach space $X$, a
linear operator $U:K \rightarrow X$ is called $\gamma$-radonifying iff
$\gamma_K \circ U^{-1}$
is $\sigma$-additive. Here $\gamma_K$ is the canonical Gaussian cylindrical measure on $K$. If a linear map  $U: K \rightarrow X$ is $\gamma-$radonifying,
then $\gamma_K\circ U^{-1}$ has a unique extension to a Borel probability measure denoted by
$\nu_U$ on $X$. By $R(K,X)$ we denote the Banach space of $\gamma$-radonifying operators from $K$ to $X$ with the norm
\[
  \|U\|_{R(K,X)} := \left( \int_X |x|^2_X d\nu_U(x) \right)^{1/2}, \quad U \in R(K,X).
\]

From now on we will using freely notation introduced in the former sections.
It follows from \cite[Theorem 2.3]{BrzNee03} that for a self adjoint operator $U\ge cI$ in $H$, where $c>0$, such that $U^{-1}$ is compact,  the operator $U^{-s}:H \rightarrow \mathbb{L}^{p}(\bS)$ is $\gamma$-radonifying iff
\begin{equation}\label{eqrad}
\int_{\bS} \left[ \sum_\ell \lambda^{-2s}_\ell | \ve_\ell(\x) |^2 \right]^{p/2} dS(\xb) < \infty,
\end{equation}
where $\{\ve_\ell\}$ is an orthonormal basis of $H$ corresponding to $U$.  This implies the following result.
\begin{lemma}\label{lem-rad}
Let $\DDelta$ denote the Laplace-de Rham operator on $\mathbb S$. Then the operator
\begin{equation}\label{equ:DeltaRadon}
(-\DDelta)^{-s}:H \rightarrow \mathbb{L}^4(\bS) \quad \mbox{ is }  \gamma-\mbox{ radonifying iff }s > 1/2.
\end{equation}
\end{lemma}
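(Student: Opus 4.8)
The plan is to apply the criterion \eqref{eqrad} with $U = -\DDelta$ (restricted to $H$) and $p=4$, so the first task is to write down the spectral data of $-\DDelta$ on $H$ explicitly. Let $\{Y_{n,k} : n\ge 0,\ |k|\le n\}$ be an orthonormal basis of $L^2(\bS)$ consisting of spherical harmonics, so that $-\Delta Y_{n,k} = n(n+1)Y_{n,k}$. By the third identity in \eqref{curln_curl_psi}, $\DDelta \Curl Y_{n,k} = \Curl \Delta Y_{n,k} = -n(n+1)\Curl Y_{n,k}$, so for each $n\ge 1$ the fields $\Curl Y_{n,k}$ are eigenvectors of $-\DDelta$ with eigenvalue $\lambda_n = n(n+1)$. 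Combining \eqref{ip_identities1} with the second identity in \eqref{curln_curl_psi} gives $\|\Curl Y_{n,k}\|^2 = (Y_{n,k},\curln \Curl Y_{n,k}) = (Y_{n,k},-\Delta Y_{n,k}) = n(n+1)$, so $\ve_{n,k} := [n(n+1)]^{-1/2}\,\Curl Y_{n,k}$ furnishes an orthonormal eigenbasis; completeness in $H$ follows because $\calV$ is dense in $H$ and, the sphere being simply connected, $\calH = \{0\}$. Note also that the least eigenvalue is $\lambda_1 = 2 > 0$, so $U\ge 2I$ with compact inverse, as the criterion requires.

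With this basis the sum inside \eqref{eqrad} reorganises as $\sum_{n\ge 1}\lambda_n^{-2s}\sum_{|k|\le n}|\ve_{n,k}(\x)|^2$, so I next need the pointwise ``vector addition'' sum $\sum_{|k|\le n}|\ve_{n,k}(\x)|^2$. Since $\Curl f = -\xb \times \Grad f$ by \eqref{Curl2} and $\Grad f$ is tangential (hence orthogonal to $\xb$, with $|\xb|=1$), one has the pointwise identity $|\Curl Y_{n,k}(\x)| = |\Grad Y_{n,k}(\x)|$. The key observation is that $\x \mapsto \sum_{|k|\le n}|\Grad Y_{n,k}(\x)|^2$ is invariant under $SO(3)$ — the degree-$n$ eigenspace is rotation invariant and the sum is a unitarily invariant quadratic expression — hence constant on $\bS$. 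Its value is pinned down by integrating: using $(\Grad Y_{n,k},\Grad Y_{n,k}) = (Y_{n,k},-\Delta Y_{n,k}) = n(n+1)$ together with $|\bS| = 4\pi$ and the multiplicity $2n+1$, I obtain
\[
\sum_{|k|\le n} |\Grad Y_{n,k}(\x)|^2 = \frac{(2n+1)\,n(n+1)}{4\pi}, \qquad \x \in \bS .
\]

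Substituting, the bracket in \eqref{eqrad} collapses to a constant,
\[
\sum_{n,k} \lambda_n^{-2s}\,|\ve_{n,k}(\x)|^2 = \frac{1}{4\pi}\sum_{n=1}^\infty (2n+1)\,[n(n+1)]^{-2s},
\]
independent of $\x$. Taking $p=4$, the left-hand side of \eqref{eqrad} equals $4\pi$ times the square of this constant, so the radonifying condition holds precisely when $\sum_{n\ge 1}(2n+1)[n(n+1)]^{-2s} < \infty$. Since the general term is comparable to $n^{1-4s}$, this series converges if and only if $1-4s < -1$, i.e. if and only if $s > 1/2$, which is the claim.

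I expect the only genuine obstacle to be the middle step: rigorously justifying that the vectorial addition sum $\sum_{|k|\le n}|\Grad Y_{n,k}(\x)|^2$ is constant in $\x$ (the vector analogue of the scalar addition theorem $\sum_{|k|\le n}|Y_{n,k}(\x)|^2 = (2n+1)/4\pi$). I would establish this through rotation invariance — transforming the degree-$n$ harmonics by the unitary representation of $SO(3)$ and using that $\Grad$ intertwines rotations — rather than through an explicit computation in spherical coordinates, since the invariance argument is cleaner and immediately yields constancy; the value then drops out by the elementary integration above.
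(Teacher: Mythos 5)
Your proof is correct and follows the same skeleton as the paper's: apply the criterion \eqref{eqrad} with $p=4$, collapse the inner sum over the degree-$n$ eigenspace to the constant $(2n+1)/4\pi$, and reduce the claim to the convergence of $\sum_{n\ge 1}(2n+1)[n(n+1)]^{-2s}$, which holds iff $s>1/2$. The one genuine difference is how the pointwise constancy of the eigenspace sum is obtained: the paper simply quotes the addition theorem for (divergence-free) vector spherical harmonics, $\sum_{|m|\le\ell}|\vY_{\ell,m}(\xb)|^2=\frac{2\ell+1}{4\pi}P_\ell(1)$, from Varshalovich--Moskalev--Khersonskii, whereas you reconstruct the eigenbasis of $-\DDelta$ on $H$ as the normalised fields $[n(n+1)]^{-1/2}\Curl Y_{n,k}$ and prove the addition identity yourself via $SO(3)$-invariance plus integration. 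Your route is self-contained and has the side benefit of making explicit two points the paper leaves implicit: completeness of the eigenbasis in $H$ (via density of $\calV$ and $\calH=\{0\}$) and the fact that the spectrum on $H$ starts at $\lambda_1=2>0$ (the paper's indexing from $\ell=0$ is slightly loose here, since the $\ell=0$ mode is absent from $H$); the price is that you must carry out the intertwining argument for $\Grad$ under rotations, which is routine but does need the unitarity of the representation on each degree-$n$ eigenspace. Both arguments are sound and yield the same threshold $s>1/2$.
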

\begin{proof}
Let us recall that all the distinct eigenvalues of $-\DDelta$ are given by a sequence $\lambda_\ell = \ell(\ell+1)$, $\ell=0,1,\ldots$ and the corresponding eigenfunctions are given by
the divergence free vector spherical harmonics $\vY_{\ell,m}$ for $|m| \le \ell$, $\ell \in {\mathbb N}$
\cite[page 216]{VarMosKhe88}. Let us recall also the addition theorem for vector spherical harmonics
\cite[formula (81), page 221]{VarMosKhe88}
\[
  \sum_{|m|\le \ell} |\vY_{\ell,m}(\xb)|^2 = \frac{2\ell+1}{4\pi} P_\ell(1),\;\xb\in \bS,
\]
and the fact that $P_\ell(1)=1$ with $P_\ell$ being the Legendre
polynomial of degree $\ell$. Therefore, \eqref{eqrad} yields
\begin{align}
 \int_{\bS} &\left[ \sum_{\ell=0}^\infty
    (\ell(\ell+1))^{-2s}\sum_{|m|\le \ell} |\vY_{\ell,m}(\xb)|^2 \right]^{4/2} dS(\xb)
  \label{equ:add vec}  \\
 &= \int_{\bS} \left[ \sum_{\ell=0}^\infty
     (\ell(\ell+1))^{-2s} \frac{2\ell+1}{4\pi} P_\ell(1) \right]^2 dS(\xb) < \infty
\nonumber
\end{align}
if and only if $s>\frac{1}{2}$ and the lemma follows.
\end{proof}

Let
\[X=\mathbb{L}^4(\bS) \cap H\]
denote the Banach space endowed with the norm
\[\|x\|_X=\|x\|_H+\|x\|_{\mathbb{L}^4(\bS)}.\]

\begin{remark}\label{rem-rad1} It follows from Lemma \ref{lem-rad} that if  $s > 1/2$ then
 the operator
\begin{equation*}\label{equ:DeltaRadon1}
\A^{-s}:H \rightarrow \mathbb{L}^4(\bS) \cap H \end{equation*}
 is   $\gamma$-radonifying.
\end{remark}

Let us recall, that $X$ is an $M$-type 2 Banach space, see \cite{Brz97} for details.\\
The Stokes operator $-\A$ restricted to $X$ is an infinitesimal generator of  an analytic semigroup. We will consider an operator in $X$ defined by the formula
\[\hat{\A} = \nu \A + \CC,\quad\mathrm{dom}(\hat{\A})=\mathrm{dom}(\A),\]
where $\nu > 0$, and $\CC$ is the Coriolis operator.
For the reader's convenience we recall a result presented in \cite{BrzGolLeG15}.
\begin{proposition}\cite[Proposition 5.3]{BrzGolLeG15}\label{P1}
The operator $\hat{\A}$ with the domain $\mbox{dom}(\hat{\A})=\mbox{dom}(\A)$ generates an analytic $C_0$-semigroup $\left(e^{-t\hat{\A}}\right)$ in $X$. Moreover, there exist constants $\mu>0$, such that for any $\delta\ge 0$ there exists $M_\delta\ge 1$ such that
 \[\|{\hat{\A}}^{\delta} e^{-t\hat{\A}}\|_{\calL(X,X)}\le M_\delta t^{-\delta} e^{-\mu t}\quad t>0.\]
\end{proposition}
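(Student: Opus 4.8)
The plan is to establish the result in two independent pieces: first the generation of an analytic $C_0$-semigroup on $X$, and then the quantitative decay estimate with the shift $\mu>0$. For the generation statement, I would begin from the known fact, recalled just before the proposition, that the Stokes operator $-\A$ restricted to $X$ generates an analytic semigroup, where $X = \mathbb{L}^4(\bS)\cap H$ is an $M$-type $2$ Banach space. The operator $\hat{\A}=\nu\A+\CC$ differs from $\nu\A$ only by the Coriolis term $\CC$, which is \emph{bounded} on $H$ (as noted after the definition of $\CC$). Since $\mathrm{dom}(\hat{\A})=\mathrm{dom}(\A)$ and $\CC$ is a bounded (hence relatively bounded with arbitrarily small relative bound) perturbation of $\nu\A$, the standard perturbation theorem for generators of analytic semigroups applies: adding a bounded operator to the generator of an analytic semigroup again yields the generator of an analytic $C_0$-semigroup on the same space. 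This is precisely the content of \cite[Proposition 5.3]{BrzGolLeG15}, so I would invoke that reference for the generation claim and for the resulting sectoriality of $\hat{\A}$.

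For the decay estimate, the key structural input is that $\hat{\A}$ is sectorial with spectrum contained in a half-plane $\{\mathrm{Re}\,z \ge \mu_0\}$ for some $\mu_0>0$; this positivity of the spectral bound comes from the coercivity of the bilinear form $a$ (with constant $\alpha=\lambda_1-2\nu>0$ under the assumption $\lambda_1>2\nu$) together with the skew-symmetry $(\CC\vu,\vu)=0$ from \eqref{Cuu}, which shows the Coriolis term contributes nothing to the real part of the quadratic form. I would fix any $\mu$ with $0<\mu<\mu_0$. For an analytic semigroup whose generator has spectral bound strictly negative (i.e. $e^{-t\hat{\A}}$ decaying), the moment inequality for fractional powers gives, for each $\delta\ge 0$, a constant $M_\delta\ge 1$ with
\[
\|\hat{\A}^{\delta}e^{-t\hat{\A}}\|_{\calL(X,X)} \le M_\delta\, t^{-\delta} e^{-\mu t},\qquad t>0,
\]
which is the desired bound. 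The $t^{-\delta}$ singularity near $t=0$ is the familiar smoothing estimate for analytic semigroups, while the exponential factor $e^{-\mu t}$ is obtained by rescaling: writing $\hat{\A}=(\hat{\A}-\mu I)+\mu I$ and using that $\hat{\A}-\mu I$ still generates a bounded analytic semigroup (indeed exponentially decaying) because $\mu<\mu_0$.

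The step I expect to require the most care is verifying the exponential decay rate $\mu>0$ in the Banach space $X=\mathbb{L}^4(\bS)\cap H$ rather than merely in the Hilbert space $H$. The coercivity argument and the identity \eqref{Cuu} live naturally in $H$, where energy estimates are available; transferring the strictly positive spectral bound to the $\mathbb{L}^4$-setting is more delicate and relies on the sectoriality of $\hat{\A}$ on $X$ together with the fact that the $\mathbb{L}^4$ and $H$ norms interact through the embedding and interpolation inequalities such as \eqref{equ:L4}. Since the full argument is carried out in the companion paper, I would cite \cite[Proposition 5.3]{BrzGolLeG15} for the precise constants $\mu$ and $M_\delta$, and present here only the reduction of the perturbed operator $\hat{\A}$ to the unperturbed analytic generator $\nu\A$ via the boundedness of $\CC$, emphasizing that the standard analytic-semigroup fractional-power estimates then yield the stated inequality.
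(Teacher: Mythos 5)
The paper itself offers no proof of this proposition: it is imported verbatim from the companion paper, cited as \cite[Proposition 5.3]{BrzGolLeG15}, and the only words surrounding it are ``For the reader's convenience we recall a result presented in \cite{BrzGolLeG15}.'' So there is no in-paper argument to compare yours against; what you have done is reconstruct the standard proof that the cited reference carries out. Your reconstruction is sound and follows the expected route: bounded perturbation of the analytic generator $\nu\A$ by the Coriolis term, sectoriality plus a strictly positive spectral bound coming from coercivity of $a$ and the cancellation $(\CC\vu,\vu)=0$ in \eqref{Cuu}, and then the standard smoothing estimate $\|\hat{\A}^{\delta}e^{-t\hat{\A}}\|\le M_\delta t^{-\delta}e^{-\mu t}$ for an analytic semigroup with negative growth bound. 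You also correctly flag the genuinely delicate point, namely transferring the positive spectral bound from the Hilbert space $H$, where the energy identities live, to $X=\mathbb{L}^4(\bS)\cap H$, and you defer that to the reference, which is exactly what the authors do. One small loose end in your sketch: for the perturbation step on $X$ you need $\CC$ bounded on $X$ itself, not merely on $H$ as recorded after \eqref{Cuu}; this does hold, since $\CC_1$ is multiplication by a smooth bounded field composed with the cross product (hence bounded on $\mathbb{L}^4$) and the Leray projection is bounded on $\mathbb{L}^p(\bS)$ for $1<p<\infty$, but it deserves a sentence. With that remark added, your proposal is a faithful and correct expansion of what the paper leaves to the citation.
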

Let $E$ denote the completion of $X$
with respect to the image norm $\|\vv\|_E = \|\A^{-\delta} \vv\|_X$, $\vv\in X$. For
$\xi \in (0,1/2)$ we set
\[
  C^\xi_{1/2}(\R,E) := \{\omega \in C(\R,E): \omega(0) = 0,
  \sup_{t,s\in \R} \frac{|\omega(t)-\omega(s)|_E}{|t-s|^\xi(1+|t|+|s|)^{1/2}} < \infty\}.
\]
The space $C^\xi_{1/2}(\R,E)$ equipped with the the norm
\[
 \|\omega\|_{C^\xi_{1/2}(\R,E)} =
 \sup_{t \ne s\in \R} \frac{|\omega(t)-\omega(s)|_E}{|t-s|^\xi(1+|t|+|s|)^{1/2}}
\]
is a nonseparable Banach space. However, the closure of $\{\omega \in C^\infty_0(\R):
\omega(0)=0\}$ in $C^\xi_{1/2}(\R,E)$, denoted by $\Omega(\xi,E)$, is a separable Banach
space.

Let us denote by $C_{1/2}(\R,X)$ the space of all continuous functions
$\omega:\R \rightarrow X$ such that
\[
 \|\omega\|_{C_{1/2}(\R,E)} = \sup_{t\in \R} \frac{|\omega(t)|_E}{1+|t|^{1/2}}<\infty.
\]
The space $C_{1/2}(\R,E)$ endowed with the norm $\|\cdot\|_{C_{1/2}(\R,E)}$
is a nonseparable Banach space.

We denote by $\calF$ the Borel $\sigma$-algebra on $\Omega(\xi,E)$. One can show \cite{Brz96} that
for $\xi \in (0,1/2)$, there exists a Borel probability measure $\bbP$ on $\Omega(\xi,E)$ such that
the canonical process $w_t$, $t\in \R$, defined by
\begin{equation}\label{equ:Wiener}
w_t(\omega) := \omega(t), \quad \omega \in \Omega(\xi,E),
\end{equation}
is a two-sided Wiener process. The Cameron-Martin (or Reproducing Kernel Hilbert space)
of the Gaussian measure $\calL(w_1)$ on $E$ is equal to $K$. For $t \in \R$, let
$\calF_t := \sigma\{w_s: s\le t\}$. Since for each $t \in \R$ the map
$z \circ i_t: E^\ast \rightarrow L^2(\Omega(\xi,E),\calF_t,\bbP)$, where
$i_t: \Omega(\xi, E) \ni \gamma \mapsto \gamma(t) \in E$, satisfies
$\E|z\circ i_t|^2 = t|z|^2_K$, there exists a unique extension of $z\circ i_t$ to a bounded
linear map $W_t: K \rightarrow L^2(\Omega(\xi,E),\calF_t,\bbP)$. Moreover, the family
$(W_t)_{t\in \R}$ is an $H$-cylindrical Wiener process on a filtered probability space
$(\Omega(\xi,E), \mathbb{F},\bbP)$, where $\mathbb{F}=\big(\calF_t)_{t \in \R}$ in the sense of e.g. \cite{Brz+Pesz01}.

\subsection{Ornstein-Uhlenbeck process}\label{sec:OUprocess}

The following is our standing assumption.
\begin{assumption}\label{ass:radon}
Suppose $K \subset H \cap \mathbb{L}^4(\bS)$ is a Hilbert space such that
\be\label{radonify}
  \A^{-\delta} : K \rightarrow H \cap \mathbb{L}^4(\bS) \mbox{ is } \gamma\mbox{-radonifying}
\ee
for some $\delta \in (0,\frac12)$,
\end{assumption}
\begin{remark}\label{rem-radon} It follows from Remark \ref{rem-rad1} that if $K \subset D\big(\A^{s})$ with $s>0$, then Assumption \ref{ass:radon} is satisfied. See also Remark 6.1 in \cite{BrzLi06} and Remark 5.2 in \cite{BrzGolLeG15}.
\end{remark}

On the space $\Omega(\xi,E)$ we consider a flow $\vartheta = (\vartheta_t)_{t\in \R}$ defined by
\[
  \vartheta_t \omega(\cdot) = \omega(\cdot + t) - \omega(t), \quad \omega\in \Omega(\xi,E), \;\;t\in \R.
\]

For $\xi \in (\delta,1/2)$ and $\tilde{\omega} \in C^\xi_{1/2}(\R,X)$ we define
\be\label{zhat}
 \hat{z}(t) = \hat{z}(\hat{\A};\tilde{\omega})(t) =
 \int_{-\infty}^t {\hat{\A}}^{1+\delta} e^{-(t-r)\hat{\A}} (\tilde{\omega}(t)-\tilde{\omega}(r))dr, \quad t \in \R.
\ee
By Proposition~\ref{P1}, for each $\delta > 0 $ there exists $C = C(\delta)>0$ such that
\be\label{semigroup}
 \|{\hat{\A}}^{\delta} e^{-t\hat{\A}} \|_{\calL(X,X)} \le C t^{-\delta} e^{-\mu t}, \quad t  \ge 0.
\ee
 This was an assumption in \cite[Proposition 6.2]{BrzLi06}.
Rewriting that proposition in a slightly more general form we have
\begin{proposition}\label{zhat well def}
For any $\alpha\ge 0$, the operator $-(\hat{\A}+\alpha I)$ is a generator of an analytic semigroup
$\{e^{-t(\hat{\A}+\alpha I)}\}_{t \ge 0}$ in $X$ such that
\[\|{\hat{\A}}^{\delta} e^{-t(\hat{\A}+\alpha I)} \|_{\calL(X,X)} \le C t^{-\delta} e^{-(\mu+\alpha) t}, \quad t  \ge 0.\]
If $t \in \R$, then $\hat{z}(t)$ defined in \eqref{zhat} is a well-defined element of $X$ and for each $t \in \R$
the mapping $\tilde{\omega} \mapsto \hat{z}(t)$ is continuous from $C^\xi_{1/2}(\R,X)$ to $X$.
Moreover, the map $\hat{z} : C^\xi_{1/2}(\R,X) \rightarrow C_{1/2}(\R,X)$ is well defined,
linear and bounded. In particular, there exists a constant $C < \infty$ such that for
any $\tilde{\omega} \in C^{\xi}_{1/2}(\R,X)$
\be\label{equ:zhatgrow}
  | \hat{z}(\tilde{\omega})(t) | \le C (1+ |t|^{1/2})\|\tilde{\omega}\|_{C^{1/2}(\R,X)}.
\ee
\end{proposition}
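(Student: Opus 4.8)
The plan is to treat the three assertions in turn, reducing everything to the smoothing estimate of Proposition~\ref{P1} combined with the Hölder bound built into the definition of $C^\xi_{1/2}(\R,X)$. First I would dispose of the semigroup claim, which is a triviality: since $\alpha I$ commutes with $\hat{\A}$ one has $e^{-t(\hat{\A}+\alpha I)} = e^{-\alpha t}e^{-t\hat{\A}}$, so $-(\hat{\A}+\alpha I)$ generates an analytic $C_0$-semigroup on $X$ as soon as $-\hat{\A}$ does. Multiplying the estimate of Proposition~\ref{P1} by $e^{-\alpha t}$ then gives
\[
\|\hat{\A}^\delta e^{-t(\hat{\A}+\alpha I)}\|_{\calL(X,X)} = e^{-\alpha t}\,\|\hat{\A}^\delta e^{-t\hat{\A}}\|_{\calL(X,X)} \le M_\delta\, t^{-\delta} e^{-(\mu+\alpha)t}, \quad t>0,
\]
which is the stated bound with $C=M_\delta$.

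The substance of the proof is the well-definedness of $\hat{z}(t)$ and the growth bound. The substitution $s=t-r$ rewrites \eqref{zhat} as $\hat{z}(t)=\int_0^\infty \hat{\A}^{1+\delta}e^{-s\hat{\A}}\big(\tilde{\omega}(t)-\tilde{\omega}(t-s)\big)\,ds$, and I would estimate the integrand in $X$ by pairing Proposition~\ref{P1}, now applied with exponent $1+\delta$, against the Hölder inequality encoded in $C^\xi_{1/2}(\R,X)$, namely $\|\tilde{\omega}(t)-\tilde{\omega}(t-s)\|_X \le \|\tilde{\omega}\|_{C^\xi_{1/2}(\R,X)}\, s^\xi (1+|t|+|t-s|)^{1/2}$. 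This yields
\[
\|\hat{\A}^{1+\delta}e^{-s\hat{\A}}(\tilde{\omega}(t)-\tilde{\omega}(t-s))\|_X \le M_{1+\delta}\,\|\tilde{\omega}\|_{C^\xi_{1/2}(\R,X)}\, s^{\xi-\delta-1} e^{-\mu s}(1+|t|+|t-s|)^{1/2}.
\]
The exponent $\xi-\delta-1$ lies in $(-1,0)$ precisely because $\xi>\delta$, so the singularity at $s=0$ is integrable, while the factor $e^{-\mu s}$ dominates the polynomial growth $(1+|t|+|t-s|)^{1/2}$ as $s\to\infty$; hence the integrand is Bochner integrable in the Banach space $X$ and $\hat{z}(t)\in X$. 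Bounding $(1+|t|+|t-s|)^{1/2}\le (1+2|t|)^{1/2}+s^{1/2}$ and recognising the two resulting integrals as convergent Gamma integrals (finite since $\xi-\delta>0$ and $\xi-\delta+\tfrac12>0$) produces $\|\hat{z}(t)\|_X \le C(1+|t|^{1/2})\|\tilde{\omega}\|_{C^\xi_{1/2}(\R,X)}$, which is \eqref{equ:zhatgrow}.

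Finally I would extract the mapping properties. Linearity of $\hat{z}(t)$ in $\tilde{\omega}$ is clear from \eqref{zhat}, and the bound just obtained shows that for each fixed $t$ the linear map $\tilde{\omega}\mapsto\hat{z}(t)$ is bounded, hence continuous, from $C^\xi_{1/2}(\R,X)$ to $X$. To see that $\hat{z}(\tilde{\omega})$ lies in $C_{1/2}(\R,X)$ I would verify continuity in $t$: for fixed $s>0$ the map $t\mapsto \hat{\A}^{1+\delta}e^{-s\hat{\A}}(\tilde{\omega}(t)-\tilde{\omega}(t-s))$ is continuous in $X$ (by continuity of $\tilde{\omega}$ and boundedness of $\hat{\A}^{1+\delta}e^{-s\hat{\A}}$), and on any bounded $t$-interval the integrand is dominated by the fixed integrable function of $s$ found above, so dominated convergence gives continuity of $t\mapsto\hat{z}(t)$; dividing the growth bound by $(1+|t|^{1/2})$ and taking the supremum over $t$ shows $\hat{z}:C^\xi_{1/2}(\R,X)\to C_{1/2}(\R,X)$ is well defined and bounded. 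The only genuinely delicate point is the convergence analysis in the middle step: the whole argument hinges on the compatibility $\xi>\delta$, which simultaneously guarantees integrability of the $s=0$ singularity and leaves enough room for the Gamma integrals generated by the $(1+|t|)^{1/2}$ growth factor; everything else is routine.
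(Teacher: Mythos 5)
Your proposal is correct. The paper does not actually prove this proposition --- it imports it from \cite[Proposition 6.2]{BrzLi06}, observing only that the semigroup estimate of Proposition~\ref{P1} supplies the hypothesis needed there --- and your argument (substitute $s=t-r$, pair the smoothing bound $\|\hat{\A}^{1+\delta}e^{-s\hat{\A}}\|_{\calL(X,X)}\le M_{1+\delta}s^{-1-\delta}e^{-\mu s}$ with the weighted H\"older modulus of $\tilde\omega$, and exploit $\xi>\delta$ for integrability at $s=0$) is precisely the standard computation underlying that cited result, so you have in effect reconstructed the omitted proof along the expected lines.
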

The following results for the operator $\hat{\A}$ follow from Corollary 6.4, Theorem 6.6 and Corollary 6.8 in
from \cite{BrzLi06}, respectively.
\begin{corollary}
For all $-\infty < a <b<\infty$
and $t \in \R$, for $\tilde{\omega} \in C^\xi_{1/2}(\R,X)$ the map
\[
 \tilde{\omega} \mapsto (\hat{z}(\tilde{\omega})(t),\hat{z}(\tilde{\omega}))
         \in X \times L^4(a,b;X)
\]
is continuous. Moreover, the above result is valid with the space $C^\xi_{1/2}(\R,X)$
being replaced by $\Omega(\xi,X)$.
\end{corollary}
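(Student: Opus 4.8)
The plan is to reduce everything to boundedness by exploiting linearity. The map $\tilde{\omega}\mapsto\hat{z}(\tilde{\omega})$ is linear, so continuity of $\tilde{\omega}\mapsto(\hat{z}(\tilde{\omega})(t),\hat{z}(\tilde{\omega}))$ into the product space $X\times L^4(a,b;X)$ is equivalent to boundedness, and it suffices to treat the two components separately. The first component $\tilde{\omega}\mapsto\hat{z}(\tilde{\omega})(t)\in X$ is already settled by Proposition~\ref{zhat well def}, which states that for each fixed $t$ the evaluation map is continuous (hence bounded linear) from $C^\xi_{1/2}(\R,X)$ to $X$. Thus the only work is to establish continuity of the second component $\tilde{\omega}\mapsto\hat{z}(\tilde{\omega})\in L^4(a,b;X)$.

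For that component I would factor the map through the intermediate space $C_{1/2}(\R,X)$. Proposition~\ref{zhat well def} guarantees that $\hat{z}\colon C^\xi_{1/2}(\R,X)\to C_{1/2}(\R,X)$ is well defined, linear and bounded, so in particular $\hat{z}(\tilde{\omega})$ is a continuous $X$-valued function and hence Bochner measurable on $[a,b]$. It then remains to verify that the inclusion $C_{1/2}(\R,X)\hookrightarrow L^4(a,b;X)$ is bounded. This is elementary: writing $M=\|\hat{z}(\tilde{\omega})\|_{C_{1/2}(\R,X)}$ one has $|\hat{z}(\tilde{\omega})(t)|_X\le M(1+|t|^{1/2})$ by the defining property of the $C_{1/2}$-norm (equivalently by the growth bound \eqref{equ:zhatgrow}), whence
\[
\|\hat{z}(\tilde{\omega})\|_{L^4(a,b;X)}^4=\int_a^b|\hat{z}(\tilde{\omega})(t)|_X^4\,dt\le M^4\int_a^b(1+|t|^{1/2})^4\,dt,
\]
and the last integral is finite because $a$ and $b$ are finite. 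Composing the bounded linear maps $\hat{z}\colon C^\xi_{1/2}(\R,X)\to C_{1/2}(\R,X)$ and $C_{1/2}(\R,X)\hookrightarrow L^4(a,b;X)$ yields boundedness, hence continuity, of the second component.

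Since both components are continuous into $X$ and $L^4(a,b;X)$ respectively, the product map is continuous, which is the first assertion. For the final claim I would simply note that $\Omega(\xi,X)$ is by construction a closed linear subspace of $C^\xi_{1/2}(\R,X)$ carrying the inherited norm, so the restriction of a continuous map stays continuous and no separate argument is needed. I expect no real obstacle here: Proposition~\ref{zhat well def} already absorbs the analytic content (the smoothing estimate \eqref{semigroup} and the resulting bound \eqref{equ:zhatgrow}), and the rest is a bookkeeping assembly together with the trivial continuous embedding of $C_{1/2}(\R,X)$ into $L^4$ on a bounded interval. The only mildly delicate point, Bochner measurability of $t\mapsto\hat{z}(\tilde{\omega})(t)$ on $[a,b]$, is immediate from the continuity of $\hat{z}(\tilde{\omega})$ as an element of $C_{1/2}(\R,X)$.
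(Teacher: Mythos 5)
Your argument is correct, but it cannot be matched against a proof in the paper because the paper does not actually prove this corollary: it is stated as one of three results that ``follow from Corollary 6.4, Theorem 6.6 and Corollary 6.8 in \cite{BrzLi06}'', i.e.\ the authors simply import it from the earlier reference. What you have written is therefore a self-contained derivation from Proposition~\ref{zhat well def}, and it holds up: linearity of $\hat{z}$ reduces continuity to boundedness; the first component is literally the fixed-$t$ continuity statement of that proposition; and the second component factors as the bounded linear map $\hat{z}\colon C^\xi_{1/2}(\R,X)\to C_{1/2}(\R,X)$ followed by the restriction $C_{1/2}(\R,X)\to L^4(a,b;X)$, which is bounded because $\sup_{t\in[a,b]}(1+|t|^{1/2})<\infty$ on a finite interval, with strong measurability coming from continuity (and separability of $X$, via Pettis). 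The passage to $\Omega(\xi,X)$ as a closed subspace with the inherited norm is likewise immediate. The only caveats are cosmetic: the paper's definition of the $C_{1/2}$ norm contains an $E$/$X$ typo that you have silently corrected in the intended direction, and your reduction of ``continuity into the product'' to ``continuity of each component'' deserves the one-line remark that the product carries the product topology. Your approach buys the reader a proof that stays entirely inside the present paper, at the cost of not recovering the (stronger, $\hat{\A}$-general) statements actually proved in \cite{BrzLi06}; the paper's citation buys brevity at the cost of opacity.
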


\begin{theorem}\label{thm:vartheta}
For any
$\omega \in C^{\xi}_{1/2}(\R,X)$,
\[
 \hat{z}(\vartheta_s \omega(t)) = \hat{z}(\omega)(t+s), \quad t,s \in \R.
\]
In particular, for any $\omega \in \Omega$ and all $t,s \in \R$,
$\hat{z}(\vartheta_s \omega)(0) = \hat{z}(\omega)(s)$.
\end{theorem}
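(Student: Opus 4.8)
The plan is to establish the identity by a direct change of the integration variable in the defining integral \eqref{zhat}, relying on Proposition~\ref{zhat well def} to guarantee that every integral appearing in the argument is absolutely convergent and defines an element of $X$; once absolute convergence is known, the substitution is unconditionally justified, so no analytic subtlety arises. Before starting I would note that $\vartheta_s$ maps $C^\xi_{1/2}(\R,X)$ into itself, so that $\hat{z}(\vartheta_s\omega)$ is well defined: writing $\eta := \vartheta_s\omega$ one has $\eta(0)=0$ and, using the increment formula $\eta(t)-\eta(r)=\omega(t+s)-\omega(r+s)$ established below together with the bound $(1+|t+s|+|r+s|)^{1/2} \le C_s(1+|t|+|r|)^{1/2}$, the defining seminorm of $\eta$ in $C^\xi_{1/2}(\R,X)$ is finite.

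The key algebraic observation, and the only place where the specific form of the flow enters, is that the additive constant $\omega(s)$ cancels out of every increment. Indeed, with $\eta = \vartheta_s\omega$, i.e. $\eta(\cdot) = \omega(\cdot+s) - \omega(s)$, we have for all $t,r \in \R$
\[
\eta(t) - \eta(r) = \big(\omega(t+s) - \omega(s)\big) - \big(\omega(r+s) - \omega(s)\big) = \omega(t+s) - \omega(r+s).
\]
This is exactly what makes the increment process covariant under the time shift.

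Substituting this into \eqref{zhat} gives
\[
\hat{z}(\vartheta_s\omega)(t) = \int_{-\infty}^t \hat{\A}^{1+\delta} e^{-(t-r)\hat{\A}} \big(\omega(t+s) - \omega(r+s)\big)\, dr,
\]
and I would then perform the change of variable $\rho = r+s$. The lower limit stays $-\infty$, the upper limit becomes $t+s$, and the exponent transforms according to $t-r = (t+s)-\rho$, so the right-hand side equals
\[
\int_{-\infty}^{t+s} \hat{\A}^{1+\delta} e^{-((t+s)-\rho)\hat{\A}} \big(\omega(t+s) - \omega(\rho)\big)\, d\rho = \hat{z}(\omega)(t+s),
\]
which is the claimed identity. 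The particular case then follows immediately by setting $t=0$, yielding $\hat{z}(\vartheta_s\omega)(0) = \hat{z}(\omega)(s)$.

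I do not anticipate any genuine obstacle: the whole proof is a translation of the integration variable, and the sole point requiring care --- the legitimacy of that translation for an improper integral --- is already covered by the absolute convergence asserted in Proposition~\ref{zhat well def}, which simultaneously controls the finiteness of $\hat{z}(\vartheta_s\omega)(t)$ and of $\hat{z}(\omega)(t+s)$.
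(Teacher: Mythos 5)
Your proof is correct: the paper itself gives no argument for this theorem (it is quoted from Theorem 6.6 of \cite{BrzLi06}), and your computation --- the cancellation of $\omega(s)$ in the increments of $\vartheta_s\omega$ followed by the translation $\rho=r+s$ in the defining integral \eqref{zhat}, with convergence supplied by Proposition~\ref{zhat well def} --- is exactly the standard argument that citation stands for. The preliminary check that $\vartheta_s$ preserves $C^\xi_{1/2}(\R,X)$ is a worthwhile addition.
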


For $\xi \in C_{1/2}(\R,X)$ we put
\[
  (\tau_s \zeta)(t) = \zeta(t+s), \quad t,s \in \R.
\]
Thus, $\tau_s$ is a linear a bounded map from $C_{1/2}(\R,X)$ into itself. Moreover,
the family $(\tau_s)_{s\in \R}$ is a $C_0$ group on $C_{1/2}(\R,X)$.

Using this notation Theorem~\ref{thm:vartheta} can be rewritten in the following way.
\begin{corollary}\label{cor:flow}
For $s\in \R$, $\tau_s \circ \hat{z} = \hat{z} \circ \vartheta_s$, i.e.
\[
 \tau_s(\hat{z}(\omega)) =
 \hat{z}(\vartheta_s(\omega)), \quad \omega \in C^\xi_{1/2}(\R,X).
\]
\end{corollary}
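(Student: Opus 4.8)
The plan is to read this corollary as nothing more than Theorem~\ref{thm:vartheta} transcribed into the language of the shift group $(\tau_s)_{s\in\R}$. Both sides of the asserted identity are, a priori, elements of $C_{1/2}(\R,X)$: by Proposition~\ref{zhat well def} the map $\hat z$ takes values in $C_{1/2}(\R,X)$, and $\tau_s$ maps $C_{1/2}(\R,X)$ into itself, so $\tau_s(\hat z(\omega))$ and $\hat z(\vartheta_s\omega)$ both live in that space. Two elements of $C_{1/2}(\R,X)$ coincide precisely when they agree at every point $t\in\R$, so I would reduce the claim to the pointwise identity $[\tau_s(\hat z(\omega))](t)=[\hat z(\vartheta_s\omega)](t)$ for all $t\in\R$.

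With that reduction in hand the argument is immediate. By the definition of $\tau_s$ one has $[\tau_s(\hat z(\omega))](t)=\hat z(\omega)(t+s)$, while the right-hand side is simply $\hat z(\vartheta_s\omega)(t)$. Theorem~\ref{thm:vartheta} asserts exactly that $\hat z(\vartheta_s\omega)(t)=\hat z(\omega)(t+s)$ for all $t,s\in\R$, and combining these three equalities closes the proof.

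There is no real obstacle here; the entire content sits inside Theorem~\ref{thm:vartheta}, and the corollary is a notational repackaging. If one wished to argue directly from formula \eqref{zhat} rather than quote the theorem, the only mildly delicate point is the cancellation of the additive constant in $\vartheta_s$: writing $(\vartheta_s\omega)(u)=\omega(u+s)-\omega(s)$, the increment satisfies
\[
(\vartheta_s\omega)(t)-(\vartheta_s\omega)(r)=\omega(t+s)-\omega(r+s),
\]
so the term $\omega(s)$ drops out of \eqref{zhat}. Substituting this into the integral defining $\hat z(\vartheta_s\omega)(t)$ and performing the change of variable $r\mapsto r+s$ (which turns $e^{-(t-r)\hat{\A}}$ into $e^{-((t+s)-(r+s))\hat{\A}}$ and shifts the upper limit from $t$ to $t+s$) yields exactly $\hat z(\omega)(t+s)$. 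Thus the direct route reproduces Theorem~\ref{thm:vartheta}, and I would simply invoke that theorem for brevity.
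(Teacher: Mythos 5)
Your proof is correct and matches the paper's treatment: the paper gives no separate argument for this corollary, presenting it explicitly as Theorem~\ref{thm:vartheta} "rewritten" in the notation of the shift group $(\tau_s)$, which is precisely your reduction to the pointwise identity $[\tau_s(\hat z(\omega))](t)=\hat z(\omega)(t+s)=\hat z(\vartheta_s\omega)(t)$. Your optional direct verification via the change of variable in \eqref{zhat} is a harmless extra that reproves the theorem itself rather than the corollary.
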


We define
\[
\ovz_\alpha(\omega) := \hat{z}(\hat{\A} + \alpha I;
          (\hat{\A}+\alpha I)^{-\delta}\omega) \in C_{1/2}(\R,X),
\]
i.e. for any $t\ge 0$,
\begin{eqnarray}
  \ovz_\alpha(\omega)(t) &:=&
   \int_{-\infty}^t (\hat{\A}+\alpha I)^{1+\delta} e^{-(t-r)(\hat{\A} + \alpha I)} \label{zalpha} \\
   && [ (\hat{\A} +\alpha I)^{-\delta} \omega(t) - (\hat{\A} +\alpha I)^{-\delta} \omega(r)   ] dr
   \nonumber
\end{eqnarray}

For $\omega \in C_0^\infty(\R)$ with $\omega(0)=0$, by the fundamental theorem of calculus, we obtain
\begin{eqnarray*}
\frac{d \ovz_\alpha(t)}{dt} &=& -(\hat{\A}+ \alpha I) \int_{-\infty}^t
         (\hat{\A}+\alpha I)^{1+\delta} e^{-(t-r)(\hat{\A} + \alpha I)} \\
&& [ (\hat{\A}+\alpha I)^{-\delta} \omega(t) - (\hat{\A}+\alpha I)^{-\delta} \omega(r)   ] dr
  + \dot{\omega}(t),
\end{eqnarray*}
where $\dot{\omega}(t) = d\omega(t)/dt$.
Hence $\ovz_\alpha(t)$ is the solution of the following equation
\be\label{OUequ}
 \frac{d\ovz_\alpha(t)}{dt} + (\hat{\A}+ \alpha I) \ovz_\alpha = \dot{\omega}(t), \quad t \in \R.
\ee

It follows from Theorem~\ref{thm:vartheta} that
\begin{equation}\label{equ:zalpha}
\ovz_\alpha(\vartheta_s \omega)(t) = \ovz_\alpha(\omega)(t+s),
\quad \omega \in C^{\xi}_{1/2}(\R,X), \; t,s \in \R.
\end{equation}

We can view the formula \eqref{zalpha} as a definition of a process $\ovz_\alpha(t)$, $t \in \R$,
on the probability space $(\Omega(\xi,E),\calF,\bbP)$. Equation \eqref{OUequ} suggests that this
process is an Ornstein-Uhlenbeck process.

\begin{proposition}
\label{pro:stationary}
The process $\ovz_\alpha(t)$, $t\in \R$, is a stationary Ornstein-Uhlenbeck process.
It is the solution of the equation
\[
 d\ovz_\alpha(t) + (\hat{\A}+\alpha I)\ovz_\alpha dt = dw(t), \quad t\in \R,
\]
i.e. for all $t\in \R$, a.s.
\begin{equation}\label{equ:integral}
  \ovz_\alpha(t) = \int_{-\infty}^t e^{-(t-s)(\hat{\A}+\alpha I)} dw(s),
\end{equation}
where the integral is the It\^{o} integral on the $M$-type 2 Banach space $X$ in
the sense of \cite{Brz97}.

In particular, for some constant $C$ depending on $X$,
\[
\E\|\ovz_\alpha(t)\|^4_X
\leq C
\big(\int_0^\infty e^{-2\alpha s} \|e^{-s\hat{\A}}\|^2_{R(K,X)} ds\big)^2.
\]
Moreover, $\E\|\ovz_\alpha(t)\|_X^4$ tends to $0$
as $\alpha \rightarrow \infty$.
\end{proposition}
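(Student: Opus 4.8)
The plan is to identify the pathwise object $\ovz_\alpha(t)$ defined in \eqref{zalpha} with the It\^o stochastic convolution in \eqref{equ:integral}, and then to read off the moment bound and its decay from the moment inequality on the $M$-type $2$ space $X$. First I would check that the convolution $\int_{-\infty}^t e^{-(t-s)(\hat{\A}+\alpha I)}\,dw(s)$ is a well-defined $X$-valued It\^o integral. Since $e^{-r(\hat{\A}+\alpha I)} = e^{-\alpha r} e^{-r\hat{\A}}$ and $e^{-r\hat{\A}} = (\hat{\A}^{\delta} e^{-r\hat{\A}})\,\hat{\A}^{-\delta}$, the radonifying property of $\hat{\A}^{-\delta}:K\to X$ (which follows from Assumption~\ref{ass:radon} and Remark~\ref{rem-rad1}, the lower-order Coriolis term not affecting it) together with Proposition~\ref{P1} gives
\[
 \|e^{-r(\hat{\A}+\alpha I)}\|_{R(K,X)} \le M_\delta\,\|\hat{\A}^{-\delta}\|_{R(K,X)}\, r^{-\delta} e^{-(\mu+\alpha) r}, \quad r>0.
\]
Because $\delta<1/2$, the right-hand side is square-integrable in $r$ on $(0,\infty)$, so the integrand $s\mapsto e^{-(t-s)(\hat{\A}+\alpha I)}$ lies in $L^2(-\infty,t;R(K,X))$ and the It\^o integral exists.

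The hard part will be the identification of \eqref{zalpha} with \eqref{equ:integral}. The strategy is an approximation argument. For $\omega \in C_0^\infty(\R)$ with $\omega(0)=0$ the fundamental theorem of calculus already shows (as computed just before \eqref{OUequ}) that $\ovz_\alpha(\cdot)$ solves the Ornstein--Uhlenbeck equation \eqref{OUequ}; integrating this mild formulation from $-\infty$ and using an integration by parts in $s$ to move the derivative off $\omega$, one verifies that for such smooth paths $\ovz_\alpha(t)$ coincides with $\int_{-\infty}^t e^{-(t-s)(\hat{\A}+\alpha I)}\,dw(s)$, where $dw=\dot{\omega}\,ds$. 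Both sides are then shown to depend continuously on the driving path: the left-hand side through Proposition~\ref{zhat well def} (continuity of $\tilde{\omega}\mapsto \hat{z}(\tilde{\omega})(t)$ from $C^\xi_{1/2}(\R,X)$ to $X$), and the right-hand side through the It\^o isometry, which controls the second moment of the stochastic convolution by $\int_0^\infty\|e^{-r(\hat{\A}+\alpha I)}\|^2_{R(K,X)}\,dr$. Since the smooth cylindrical paths are dense in the relevant topology and the canonical process $w$ is an $H$-cylindrical Wiener process, extending the identity by continuity in $L^2(\bbP;X)$ yields \eqref{equ:integral} for $\bbP$-a.e.\ $\omega$, which also exhibits $\ovz_\alpha$ as an Ornstein--Uhlenbeck process.

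Stationarity I would obtain from the flow covariance already recorded in \eqref{equ:zalpha}, namely $\ovz_\alpha(\vartheta_s\omega)(t) = \ovz_\alpha(\omega)(t+s)$, together with the invariance of the Wiener measure $\bbP$ under the shift flow $(\vartheta_s)_{s\in\R}$. Indeed, for fixed $s$ the map $\vartheta_s$ preserves $\bbP$, so the law of $t\mapsto \ovz_\alpha(\omega)(t+s)$ equals the law of $t\mapsto \ovz_\alpha(\vartheta_s\omega)(t)=\ovz_\alpha(\omega)(t)$; hence the finite-dimensional distributions are shift-invariant and the process is stationary.

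Finally, the moment bound follows from the $M$-type $2$ Burkholder inequality applied to the It\^o integral \eqref{equ:integral}: there is $C=C(X)$ with
\[
 \E\|\ovz_\alpha(t)\|_X^4 \le C\Big(\int_{-\infty}^t \|e^{-(t-s)(\hat{\A}+\alpha I)}\|^2_{R(K,X)}\,ds\Big)^2 = C\Big(\int_0^\infty e^{-2\alpha s}\|e^{-s\hat{\A}}\|^2_{R(K,X)}\,ds\Big)^2,
\]
where I used $e^{-s(\hat{\A}+\alpha I)} = e^{-\alpha s}e^{-s\hat{\A}}$ and the change of variables $r=t-s$. This is precisely the asserted estimate. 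The decay as $\alpha\to\infty$ is then immediate from dominated convergence: the integrand $e^{-2\alpha s}\|e^{-s\hat{\A}}\|^2_{R(K,X)}$ is dominated by $\|e^{-s\hat{\A}}\|^2_{R(K,X)}$, which is integrable on $(0,\infty)$ by the $\alpha=0$ case of the bound above (here $\mu>0$ is essential), and tends to $0$ pointwise for each $s>0$; hence $\E\|\ovz_\alpha(t)\|_X^4\to 0$.
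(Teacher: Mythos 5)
Your proposal is correct and follows essentially the same route as the paper: stationarity from the shift covariance \eqref{equ:zalpha} and the $\vartheta$-invariance of $\bbP$, identification of \eqref{zalpha} with the stochastic convolution by an approximation argument (the paper invokes finite-dimensional approximation where you approximate by smooth paths, but this is the same idea), the Burkholder inequality on the $M$-type $2$ space $X$ for the fourth-moment bound, square-integrability of $s\mapsto\|e^{-s\hat{\A}}\|_{R(K,X)}$ from the radonifying assumption and the analytic semigroup estimate (the paper's \eqref{equ:BBB}), and dominated convergence for the decay as $\alpha\to\infty$. Your write-up supplies details the paper leaves implicit, but introduces no new method.
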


\begin{proof}
Stationarity of the process $\ovz_\alpha$ follows from equation \eqref{equ:zalpha}.
The equality \eqref{equ:integral} follows by finite-dimensional approximation. 

By the Burkholder inequality, see \cite{Brz97} and \cite{Ondr2004},  we have
\begin{align}
 \E\|\ovz_\alpha(t)\|^4_X &=
  \E \left\| \int_{-\infty}^t e^{-(\hat{\A}+\alpha I)(t-s)} dw(s)\right\|^4_X \nonumber \\
       &\le C \big( \int_{-\infty}^t
\| e^{-(\hat{\A}+ \alpha I)(t-s)}\|^2_{R(K,X)} ds \big)^2
        \label{equ:Eineq1}\\
       &\leq C
\big(\int_0^\infty e^{-2\alpha s} \|e^{-s\hat{\A}}\|^2_{R(K,X)} ds\big)^2.
       \label{equ:Eineq2}
\end{align}

Using \cite[Proposition~5.3]{BrzGolLeG15}
with $\hat{\A} = -\DDelta$, $V=-2\nu \mbox{Ric}+\CC$ and
observation \eqref{equ:DeltaRadon}, we conclude that
\begin{equation}\label{equ:BBB}
\int_0^\infty\|e^{-s\hat{\A}}\|^{2}_{R(K,X)} ds < \infty.
\end{equation}

Hence, we conclude that the last integral \eqref{equ:Eineq2} is finite.
Finally, the last statement follows from \eqref{equ:Eineq2} by applying the Lebesgue
Dominated Convergence Theorem.
\end{proof}

By Proposition~\ref{pro:stationary}, $\ovz_\alpha(t)$, $t\in \R$, is a stationary
and ergodic $X$-valued process, hence by the Strong Law for Large Numbers
(see Da Prato and Zabczyk \cite{PraZab96} for a similar argument),
\begin{equation}\label{equ:Ezalpha}
 \lim_{t \rightarrow \infty} \frac{1}{t} \int_{-t}^0
 \|\ovz_\alpha(s)\|^4_X ds
  = \E \|\ovz_\alpha(0)\|^4_X, \quad \bbP\mbox{-a.s. on } C^{\xi}_{1/2}(\R,X).
\end{equation}

It also follows from Proposition~\ref{pro:stationary} that we can find
$\alpha_0$ such that for all $\alpha \ge \alpha_0$,
\begin{equation}\label{equ:Ezalpha1}
\E \|\ovz_\alpha(0)\|^4_X < \frac{8 \nu^4 \lambda_1}{27C^4},
\end{equation}
where $\lambda_1$ is the constant appearing in the Poincar\'{e} inequality
\eqref{poincare} and $C>0$ is a certain universal constant.

By adding a white noise term to \eqref{NSE_sphere} the stochastic NSEs on the sphere
is
\[
\partial_t \vu + \nabla_{\vu} \vu - \nu \bL \vu + \vomega\times \vu +\Grad p = \vf + n(\x,t),
\quad \Div \vu = 0, \quad \vu(\x,0) = \vu_0,
\]
where we assume that $\vu_0 \in H$, $\vf \in V^\prime$ and $n(t,x)$ is a Gaussian random
field which is a white noise in time. In the same way as in the deterministic case we apply
the operator of projection onto the space of divergence
free fields and reformulate the above equation as an It\^{o} type equation
\begin{equation}\label{sNSEs}
 d\vu(t) + \A \vu(t) dt + \B(\vu(t),\vu(t)) dt + \CC \vu = \vf dt + G dW(t),
 \quad \vu(0) = \vu_0.
 \end{equation}
Here $\vf$ is the deterministic forcing term and $\vu_0$ is the initial velocity.
We assume that $W$ is a cylindrical Wiener process  on a certain Hilbert space $K$ defined
on a probability space $(\Omega,{\mathcal F},\bbP)$, see \cite{DaPZab92} and \cite{Brz+Pesz01}.
$G$ is a linear continuous operator from $K$ to $H$.
The space $K$, which is the RKHS of the Wiener process,  determines the spatial smoothness of the noise term, will satisfy
further assumptions to be specified later.

Roughly speaking, a solution to problem \eqref{sNSEs} is a process $\vu(t)$, $t\ge 0$,
which can be represented in the form
\[\vu(t) = \vv(t) + \ovz_\alpha(t),\;\; t\geq 0,\]
where
$\ovz_\alpha(t)$, $t\in \R$, is a stationary Ornstein--Uhlenbeck process with
drift $-\nu\A - \CC -\alpha I$, i.e. a stationary solution of
\be\label{Orn Uhl}
 d\ovz_\alpha + (\nu\A+\CC +\alpha)\ovz_\alpha dt = G dW(t), \quad t \in \R,
\ee
and $\vv(t)$, $t \ge 0$, is the solution to the following problem
(with $\vv_0 = \vu_0 - \ovz_\alpha(0)$):
\begin{eqnarray}
 \partial_t \vv &=& -\nu \A\vv - \B(\vv+\ovz_\alpha,\vv+\ovz_\alpha)
 - \CC \vv + \alpha \ovz_\alpha + \vf,
  \label{equ:ode} \\
 \quad \vv(0) &=& \vv_0. \label{equ:v0}
\end{eqnarray}
\begin{definition}
Suppose that
$\ovz \in L_{\loc}^4([0,\infty);\mathbb{L}^4(\bS))$, $\vf \in V^\prime$
and $\vv_0 \in H$. A vector field
$\vv \in C([0,\infty);H) \cap L^2_{\loc}([0,\infty);V^\prime) \cap L^4_{\loc}([0,\infty);\mathbb{L}^4(\bS))$
is a solution to problem \eqref{equ:ode}--\eqref{equ:v0} if and only if
$\vv(0) = \vv_0$ and \eqref{equ:ode} holds in the weak sense, i.e. for any $\phi \in V$,
\be\label{def:soln}
\partial_t (\vv,\phi) = -\nu(\vv,\A\phi) - b(\vv+\ovz,\vv+\ovz,\phi)
    - (\CC \vv,\phi) + (\alpha \ovz + \vf, \phi).
\ee
\end{definition}

We remark that for \eqref{def:soln} to make sense, it is sufficient to
assume that $\vv \in L^2(0,T;V) \cap L^\infty(0,T;H)$.

We have proved the following major theorems on the existence and uniqueness
of the solution of $\eqref{equ:ode}-\eqref{equ:v0}$ in \cite{BrzGolLeG15}.

\begin{theorem}\cite[Theorem 3.1]{BrzGolLeG15}
\label{thm:existence}
Assume that $\alpha \ge 0$, $\ovz \in L^4_{\loc}([0,\infty);\mathbb{L}^4(\bS))
\cap L^2_{\loc}([0,\infty);V^\prime)$, $\vv_0~\in H$ and~$\vf\in V^\prime$. Then then there
exists a unique solution $\vv$ of problem $\eqref{equ:ode}-\eqref{equ:v0}$.
\end{theorem}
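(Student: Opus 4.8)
The plan is to realise $\vv$ as the limit of a Galerkin scheme, to close the a priori estimates in a way that respects the merely $\mathbb{L}^4$-spatial regularity of $\ovz$, to pass to the limit by compactness, and to establish uniqueness by a direct energy estimate. The structural facts I rely on are the coercivity of the leading operator, $(\A\vu,\vu)\ge c_*\|\vu\|^2_V$ for some $c_*>0$; the antisymmetry relations \eqref{skew}; the identity $(\CC\vu,\vu)=0$ from \eqref{Cuu}; the $\mathbb{L}^4$-trilinear bound \eqref{b_estimate3}; the Ladyzhenskaya interpolation \eqref{equ:L4}; and the mapping property of $\B$ contained in Lemma~\ref{lem:bL4}.

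First I would fix $T>0$, take the orthonormal basis $\{e_k\}$ of eigenfunctions of the self-adjoint operator $\A$ in $H$ (its resolvent is compact because $\calD(\A)=\hH^2(\bS)\cap V$ embeds compactly into $H$), set $H_n=\mathrm{span}\{e_1,\dots,e_n\}$ with orthogonal projection $P_n$, and look for $\vv_n(t)\in H_n$ solving the system obtained by testing \eqref{equ:ode} against $e_1,\dots,e_n$. Since $\ovz$ is only locally integrable in time, the right-hand side is Carath\'eodory in $t$, so the Carath\'eodory existence theorem produces a local solution, which the a priori bound below extends to all of $[0,T]$. Testing with $\vv_n$ and using $(\CC\vv_n,\vv_n)=0$ and $b(\vv_n+\ovz,\vv_n,\vv_n)=0$ from \eqref{skew}, the nonlinear contribution collapses to $b(\vv_n+\ovz,\ovz,\vv_n)=-b(\vv_n,\vv_n,\ovz)-b(\ovz,\vv_n,\ovz)$; applying \eqref{b_estimate3} together with \eqref{equ:L4} and Young's inequality bounds it by $\tfrac{c_*}{2}\|\vv_n\|^2_V+C(1+\|\ovz\|^4_{\mathbb{L}^4})\|\vv_n\|^2+C\|\ovz\|^4_{\mathbb{L}^4}$, and the forcing $(\alpha\ovz+\vf,\vv_n)$ is handled by Young's inequality as well. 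This yields
\[
\frac{d}{dt}\|\vv_n\|^2+c_*\|\vv_n\|^2_V\le C\big(1+\|\ovz\|^4_{\mathbb{L}^4}\big)\|\vv_n\|^2+C\big(\|\ovz\|^4_{\mathbb{L}^4}+\|\vf\|^2_{V^\prime}+\|\ovz\|^2\big),
\]
whose coefficient $1+\|\ovz\|^4_{\mathbb{L}^4}$ lies in $L^1(0,T)$. Gronwall's lemma then bounds $\vv_n$ in $L^\infty(0,T;H)\cap L^2(0,T;V)$, and \eqref{equ:L4} upgrades this to a bound in $L^4(0,T;\mathbb{L}^4(\bS))$; estimating $\B(\vv_n+\ovz)$ in $V^\prime$ by $\|\vv_n+\ovz\|^2_{\mathbb{L}^4\cap H}$ via Lemma~\ref{lem:bL4} shows $\partial_t\vv_n$ is bounded in $L^2(0,T;V^\prime)$.

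Because $V\hookrightarrow\hookrightarrow\mathbb{L}^4(\bS)\cap H\hookrightarrow V^\prime$, the Aubin--Lions lemma provides a subsequence with $\vv_n\to\vv$ strongly in $L^2(0,T;\mathbb{L}^4(\bS)\cap H)$, weakly in $L^2(0,T;V)$, and weakly-$\ast$ in $L^\infty(0,T;H)$. Passing to the limit in the linear terms is immediate; for the nonlinear term I would write $b(\vv_n+\ovz,\vv_n+\ovz,\phi)=-b(\vv_n+\ovz,\phi,\vv_n+\ovz)$ for $\phi\in V$ and observe, via \eqref{b_estimate3}, that its difference from the limiting value is controlled by $\|\vv_n-\vv\|_{L^2(0,T;\mathbb{L}^4)}$ times an $L^2(0,T;\mathbb{L}^4)$-bounded factor, which vanishes by the strong convergence. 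This identifies $\vv$ as a solution; finally $\vv\in L^2(0,T;V)$ with $\partial_t\vv\in L^2(0,T;V^\prime)$ gives $\vv\in C([0,T];H)$ through the Lions--Magenes embedding, and letting $T\to\infty$ yields the solution on $[0,\infty)$.

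For uniqueness, let $\vv_1,\vv_2$ be two solutions and $\vw=\vv_1-\vv_2$, so that $\vw(0)=0$ and the nonlinear difference equals $\B(\vw,\vv_1+\ovz)+\B(\vv_2+\ovz,\vw)$. Testing with $\vw$, the term $b(\vv_2+\ovz,\vw,\vw)$ vanishes by \eqref{skew}, while $b(\vw,\vv_1+\ovz,\vw)=-b(\vw,\vw,\vv_1)-b(\vw,\vw,\ovz)$ is bounded, again by \eqref{b_estimate3}, \eqref{equ:L4} and Young's inequality, by $\tfrac{c_*}{2}\|\vw\|^2_V+C(\|\vv_1\|^4_{\mathbb{L}^4}+\|\ovz\|^4_{\mathbb{L}^4})\|\vw\|^2$. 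Hence $\frac{d}{dt}\|\vw\|^2\le C(\|\vv_1\|^4_{\mathbb{L}^4}+\|\ovz\|^4_{\mathbb{L}^4})\|\vw\|^2$ with an $L^1(0,T)$ coefficient, and Gronwall's lemma forces $\vw\equiv0$. The main obstacle throughout is precisely the low spatial regularity of $\ovz$: since $\ovz\notin V$, the cross terms in $\B(\vv+\ovz,\vv+\ovz)$ cannot be treated by the usual $\hH^1$-based estimates, so every occurrence of the nonlinearity must be routed through \eqref{b_estimate3} and \eqref{equ:L4}, with careful accounting of which of the four terms vanish by antisymmetry and which are absorbed into the dissipation. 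This bookkeeping, rather than the compactness argument, is where the proof is genuinely delicate.
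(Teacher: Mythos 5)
The paper does not reproduce a proof of this theorem---it is imported verbatim from \cite[Theorem 3.1]{BrzGolLeG15}---but your Galerkin/Aubin--Lions/energy-estimate argument, with every occurrence of the nonlinearity routed through \eqref{b_estimate3} and \eqref{equ:L4} so that only the locally integrable quantity $\|\ovz\|^4_{\mathbb{L}^4}$ enters the Gronwall coefficient, is exactly the standard route taken in that reference. Your proposal is sound and takes essentially the same approach.
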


\begin{theorem}\cite[Theorem 3.2]{BrzGolLeG15}
\label{thm:limit}
Assume that $T>0$ is fixed. If $\vu_{0n} \rightarrow \vu_0$ in $H$,
\[
\ovz_n \rightarrow \ovz \mbox{ in } L^4([0,T];\mathbb{L}^4(\bS))\cap L^2(0,T;V^\prime), \quad
\vf_n \rightarrow \vf \mbox{ in } L^2(0,T;V^\prime).
\]
then
\[
 \vv(\cdot, \ovz_n,\vf_n, \vu_{0n}) \rightarrow \vv(\cdot,\ovz,\vf, \vu_0) \mbox{ in } C([0,T];H) \cap L^2(0,T;V),
\]
where
 $\vv(t,\ovz,\vf ,\vu_0)$ is the solution of problem $\eqref{equ:ode}-\eqref{equ:v0}$
and  $\vv(t,\ovz_n,\vf_n,\vu_{0n})$ is the solution of problem $\eqref{equ:ode}-\eqref{equ:v0}$ with
$\ovz, \vf, \vu_0$ being replaced by $\ovz_n, \vf_n, \vu_{0n}$.
In particular, $\vv(T,\ovz_n,\vu_{0n}) \rightarrow \vv(T,\ovz,\vu_0)$ in $H$.\\
\end{theorem}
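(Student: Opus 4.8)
The plan is to prove continuous dependence by the classical energy method applied to the difference of two solutions, the only real issue being the treatment of the nonlinear term under the mere $\mathbb{L}^4$-regularity of $\ovz$, which I handle through the extended trilinear estimate of Lemma~\ref{lem:bL4}. Write $\vv_n := \vv(\cdot,\ovz_n,\vf_n,\vu_{0n})$ and $\vv := \vv(\cdot,\ovz,\vf,\vu_0)$, and set $\vw_n := \vv_n-\vv$, $\eta_n := \ovz_n-\ovz$, $\vu_n := \vv_n+\ovz_n$, $\vu := \vv+\ovz$. First I would record a uniform a priori bound: since $\vu_{0n}$, $\ovz_n$, $\vf_n$ are bounded in $H$, $L^4(0,T;\mathbb{L}^4(\bS))\cap L^2(0,T;V^\prime)$ and $L^2(0,T;V^\prime)$ respectively, the energy estimate underlying Theorem~\ref{thm:existence} bounds $\vv_n$ in $L^\infty(0,T;H)\cap L^2(0,T;V)$ uniformly in $n$, and hence, via the interpolation inequality \eqref{equ:L4}, bounds $\vu_n$ in $L^4(0,T;\mathbb{L}^4(\bS))$ uniformly. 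By hypothesis $\eta_n\to0$ in $L^4(0,T;\mathbb{L}^4(\bS))$, $\vf_n-\vf\to0$ in $L^2(0,T;V^\prime)$, and $\vw_n(0)=\vu_{0n}-\vu_0\to0$ in $H$.

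Subtracting the two copies of \eqref{equ:ode}, testing against $\vw_n$, and using $(\CC\vw_n,\vw_n)=0$ from \eqref{Cuu} together with the equivalence $\|\cdot\|_V^2\sim(\A\,\cdot\,,\cdot)$ in the form $\nu(\A\vw_n,\vw_n)\ge \nu c_0\|\vw_n\|_V^2$, I obtain
\[
\tfrac12\tfrac{d}{dt}\|\vw_n\|^2 + \nu c_0\|\vw_n\|_V^2 \le -\,\mathcal N_n + \alpha(\eta_n,\vw_n) + (\vf_n-\vf,\vw_n),
\]
where $\mathcal N_n = \langle\B(\vu_n)-\B(\vu),\vw_n\rangle$. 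Writing $\B(\vu_n)-\B(\vu)=\B(\vu_n-\vu,\vu_n)+\B(\vu,\vu_n-\vu)$ with $\vu_n-\vu=\vw_n+\eta_n$, and using $b(\vu,\vw_n,\vw_n)=0$ (the skew-symmetry \eqref{skew} extends to a first argument in $\mathbb{L}^4(\bS)\cap H$ by Lemma~\ref{lem:bL4} and density of smooth divergence-free fields), the nonlinear term reduces to
\[
\mathcal N_n = b(\vw_n,\vu_n,\vw_n) + b(\eta_n,\vu_n,\vw_n) + b(\vu,\eta_n,\vw_n).
\]
The decisive structural feature is that in every summand $\vw_n\in V$ occupies the third slot, while $\ovz$ enters only through the first or second slot, which in the extension of Lemma~\ref{lem:bL4} carry the $\mathbb{L}^4(\bS)\cap H$- and $\mathbb{L}^4(\bS)$-norms (equivalent on $\mathbb{L}^4(\bS)\cap H$ to the $\mathbb{L}^4(\bS)$-norm by Hölder on $\bS$); thus no $V$-regularity of $\ovz$ is ever required.

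Applying Lemma~\ref{lem:bL4} to each summand and then \eqref{equ:L4} and Young's inequality gives, with the Young constants chosen so that each $V$-norm contribution is at most $\tfrac{\nu c_0}{8}\|\vw_n\|_V^2$,
\[
|b(\vw_n,\vu_n,\vw_n)| \le \tfrac{\nu c_0}{4}\|\vw_n\|_V^2 + C\|\vw_n\|^2\big(\|\vu_n\|_{\mathbb{L}^4}^2+\|\vu_n\|_{\mathbb{L}^4}^4\big),
\]
while $|b(\eta_n,\vu_n,\vw_n)|\le \tfrac{\nu c_0}{8}\|\vw_n\|_V^2 + C\|\eta_n\|_{\mathbb{L}^4}^2\|\vu_n\|_{\mathbb{L}^4}^2$ and $|b(\vu,\eta_n,\vw_n)|\le \tfrac{\nu c_0}{8}\|\vw_n\|_V^2 + C\|\vu\|_{\mathbb{L}^4}^2\|\eta_n\|_{\mathbb{L}^4}^2$; likewise $|\alpha(\eta_n,\vw_n)|\le C\|\eta_n\|_{\mathbb{L}^4}^2+C\|\vw_n\|^2$ using $\mathbb{L}^4(\bS)\hookrightarrow\mathbb{L}^2(\bS)$, and $|(\vf_n-\vf,\vw_n)|\le \tfrac{\nu c_0}{8}\|\vw_n\|_V^2 + C\|\vf_n-\vf\|_{V^\prime}^2$. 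Absorbing all the $V$-norm terms leaves
\[
\tfrac{d}{dt}\|\vw_n\|^2 + \nu c_0\|\vw_n\|_V^2 \le \psi_n(t)\,\|\vw_n\|^2 + g_n(t),
\]
where $\psi_n(t)=C(1+\|\vu_n\|_{\mathbb{L}^4}^2+\|\vu_n\|_{\mathbb{L}^4}^4)$ is bounded in $L^1(0,T)$ uniformly in $n$, and $g_n(t)=C\|\eta_n\|_{\mathbb{L}^4}^2(1+\|\vu_n\|_{\mathbb{L}^4}^2+\|\vu\|_{\mathbb{L}^4}^2)+C\|\vf_n-\vf\|_{V^\prime}^2$ satisfies $\int_0^T g_n\to0$ by the assumed convergences and the Cauchy--Schwarz inequality. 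Dropping the nonnegative $V$-term and applying Gronwall's inequality yields $\sup_{[0,T]}\|\vw_n(t)\|^2\le(\|\vw_n(0)\|^2+\int_0^T g_n)\exp(\int_0^T\psi_n)\to0$, i.e. convergence in $C([0,T];H)$; integrating the last display over $[0,T]$ and using this bound to control $\int_0^T\psi_n\|\vw_n\|^2$ gives $\nu c_0\int_0^T\|\vw_n\|_V^2\to0$, i.e. convergence in $L^2(0,T;V)$.

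I expect the main obstacle to be precisely the control of $\mathcal N_n$ under the weak regularity of $\ovz$: because $\ovz$ may not lie in $V$, one must verify that the natural decomposition places $\vw_n\in V$ in the slot of $b$ that demands the $V$-norm and confines $\ovz$, $\eta_n$ to the slots that demand only $\mathbb{L}^4(\bS)$-type norms, so that \eqref{equ:L4} and Young's inequality produce exactly an absorbable $\|\vw_n\|_V^2$ plus an $L^1$-in-time coefficient $\psi_n$ for Gronwall; equally essential is the uniform $L^4(0,T;\mathbb{L}^4(\bS))$ a priori bound that makes $\psi_n$ uniformly integrable and the Gronwall factor uniformly bounded. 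The justification of the energy identity itself — that $\vw_n\in L^2(0,T;V)$ with $\partial_t\vw_n\in L^2(0,T;V^\prime)$, so that $t\mapsto\|\vw_n(t)\|^2$ is absolutely continuous with $\langle\partial_t\vw_n,\vw_n\rangle=\tfrac12\tfrac{d}{dt}\|\vw_n\|^2$ — is routine once these bounds are in place.
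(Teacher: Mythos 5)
Your proposal is correct, but note that the paper does not prove Theorem~\ref{thm:limit} at all: it is imported verbatim from the companion paper \cite[Theorem 3.2]{BrzGolLeG15}, so there is no in-paper proof to compare against. Your argument is the standard continuous-dependence energy estimate for the difference $\vw_n=\vv_n-\vv$, which is essentially how the cited reference proceeds, and every ingredient you invoke is available here: the extended trilinear bound of Lemma~\ref{lem:bL4} and \eqref{b_estimate3}, the interpolation inequality \eqref{equ:L4}, the cancellation \eqref{Cuu}, and the uniform a priori bound coming from the energy inequality \eqref{equ:8_10}. The one step you rightly flag as delicate --- extending the cancellation $b(\vu,\vw_n,\vw_n)=0$ to a first argument merely in $\mathbb{L}^4(\bS)\cap H$ by density --- is exactly the extension the present paper itself uses implicitly in \eqref{eqn:812} when it discards $(\B(\ovz,\vv),\vv)$, so your treatment is consistent with the authors' own conventions.
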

%

\section{Attractors for random dynamical systems generated by the stochastic NSEs on the sphere}
\label{sec:attractor}
\subsection{Preliminaries}
A measurable dynamical system (DS) is a triple $$\frakT = (\Omega,\calF,\vartheta),$$ where
$(\Omega,\calF)$ is a measurable space and
$\vartheta:\R \times \Omega \ni (t,\omega) \mapsto \vartheta_t \omega \in \Omega$ is
a measurable map such that for all $t,s \in \R$, $\vartheta_{t+s} = \vartheta_t \circ \vartheta_s$.
A metric DS is a quadruple $$\frakT = (\Omega,\calF,\bbP,\vartheta),$$ where $(\Omega,\calF,\bbP)$
is a probability space and $(\Omega,\calF,\vartheta)$ is a measurable DS such that for each
$t \in \R$, $\vartheta_t : \Omega \rightarrow \Omega$ preserves $\bbP$.

Denote by $\Omega_\alpha(\xi,E)$ the set of those $\omega \in \Omega(\xi,E)$ for which the equality \eqref{equ:Ezalpha} holds true. It follows from Corollary~\ref{cor:flow} that this set is invariant with respect to the flow $\vartheta$, i.e. for all $\alpha \ge 0$ and
all $t \in \R$, $\vartheta_t(\Omega_\alpha(\xi,E)) \subset \Omega_\alpha(\xi,E)$. Therefore,
the same is true for a set
\[
 \hat{\Omega}(\xi,E)= \bigcap_{n=0}^\infty \Omega_n(\xi,E).
\]
It follows that as a model for a metric dynamical system we can take either the
quadruple $(\Omega(\xi,E),\calF,\bbP,\vartheta)$ or the quadruple
$(\hat{\Omega}(\xi,E),\hat{\calF},\hat{\bbP},\hat{\vartheta})$, where
$\hat{\calF}$,$\hat{\bbP}$, and $\hat{\vartheta}$ are respectively the natural
restrictions of $\calF,\bbP$ and $\vartheta$ to $\hat{\Omega}(\xi,E)$.

\begin{proposition}\label{prop:metricDS}
The quadruple $(\hat{\Omega}(\xi,E),\hat{\calF},\hat{\bbP},\hat{\vartheta})$ is a metric DS. For each $\omega \in \hat{\Omega}(\xi,E)$ the limit in \eqref{equ:Ezalpha} exists.
\end{proposition}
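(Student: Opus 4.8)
The plan is to exhibit $\hat{\Omega}(\xi,E)$ as a $\vartheta$-invariant subset of full $\bbP$-measure and then to invoke the standard principle that the restriction of a metric dynamical system to such a set is again a metric dynamical system. Recall that $(\Omega(\xi,E),\calF,\bbP,\vartheta)$ is itself a metric DS: the canonical process $w_t$ is a two-sided Wiener process and the shift $\vartheta_t\omega(\cdot)=\omega(\cdot+t)-\omega(t)$ preserves $\bbP$ by stationarity of the increments, while the cocycle identity $\vartheta_{t+s}=\vartheta_t\circ\vartheta_s$ and the joint measurability of $(t,\omega)\mapsto\vartheta_t\omega$ are immediate from the definition of the flow. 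Thus it suffices to check that this structure descends to $\hat{\Omega}(\xi,E)$.

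First I would establish that $\bbP(\hat{\Omega}(\xi,E))=1$. By Proposition~\ref{pro:stationary} the process $\ovz_\alpha$ is stationary and ergodic for every $\alpha\ge 0$, so the strong law of large numbers yields \eqref{equ:Ezalpha} for $\bbP$-almost every $\omega$; equivalently $\bbP(\Omega_\alpha(\xi,E))=1$ for each $\alpha$, in particular for $\alpha=n\in\N$. Here one must first check that $\Omega_\alpha(\xi,E)$ is measurable: for fixed $t$ the map $\omega\mapsto t^{-1}\int_{-t}^0\|\ovz_\alpha(\omega)(s)\|_X^4\,ds$ is measurable, and since $s\mapsto\ovz_\alpha(\omega)(s)$ is continuous the time average is continuous in $t$, so the limit over $\R$ coincides with the limit over $\N$ and the set on which it exists and equals $\E\|\ovz_\alpha(0)\|_X^4$ is Borel. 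Countable subadditivity of the complements then gives $\bbP(\hat{\Omega}(\xi,E))=\bbP\big(\bigcap_{n\ge 0}\Omega_n(\xi,E)\big)=1$, so $(\hat{\Omega}(\xi,E),\hat{\calF},\hat{\bbP})$ is a genuine probability space.

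Next I would record the invariance and verify the dynamical axioms. Corollary~\ref{cor:flow} gives $\vartheta_t(\Omega_\alpha(\xi,E))\subset\Omega_\alpha(\xi,E)$ for all $t\in\R$; applying this with both $t$ and $-t$ forces the strict invariance $\vartheta_t(\Omega_\alpha(\xi,E))=\Omega_\alpha(\xi,E)$, hence $\vartheta_t(\hat{\Omega}(\xi,E))=\hat{\Omega}(\xi,E)$ for every $t$. Consequently $\hat{\vartheta}_t:=\vartheta_t|_{\hat{\Omega}(\xi,E)}$ is a well-defined self-map of $\hat{\Omega}(\xi,E)$, and the group property $\hat{\vartheta}_{t+s}=\hat{\vartheta}_t\circ\hat{\vartheta}_s$ together with the joint measurability of $(t,\omega)\mapsto\hat{\vartheta}_t\omega$ are inherited by restriction. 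For measure preservation, take $A\in\hat{\calF}$, so $A\subset\hat{\Omega}(\xi,E)$; invariance gives $\hat{\vartheta}_t^{-1}A=\vartheta_t^{-1}A$, whence $\hat{\bbP}(\hat{\vartheta}_t^{-1}A)=\bbP(\vartheta_t^{-1}A)=\bbP(A)=\hat{\bbP}(A)$, using that $\vartheta_t$ preserves $\bbP$. This shows that the quadruple is a metric DS.

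Finally, the second assertion is immediate from the construction: any $\omega\in\hat{\Omega}(\xi,E)$ belongs to $\Omega_n(\xi,E)$ for every $n$, so by the very definition of $\Omega_n(\xi,E)$ the limit in \eqref{equ:Ezalpha} exists (and equals $\E\|\ovz_n(0)\|_X^4$) for each integer $\alpha=n$, which is exactly what the later estimates, e.g.\ the choice of $\alpha_0$ in \eqref{equ:Ezalpha1}, require. I expect the only genuinely non-routine points to be the measurability of $\Omega_\alpha(\xi,E)$ and the verification that measure preservation descends cleanly to the restricted $\sigma$-algebra; the ergodicity-based strong law underlying the full-measure claim and the shift-covariance of Corollary~\ref{cor:flow} are the substantive inputs, while the remainder is the standard restriction-to-an-invariant-full-measure-set argument.
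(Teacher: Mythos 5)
Your proposal is correct and follows essentially the same route as the paper, which states Proposition~\ref{prop:metricDS} without a separate proof and relies on the preceding discussion: full measure of each $\Omega_n(\xi,E)$ via stationarity, ergodicity and the strong law \eqref{equ:Ezalpha}, invariance via Corollary~\ref{cor:flow}, and the standard restriction of a metric DS to an invariant full-measure set. Your additional remarks on the measurability of $\Omega_\alpha(\xi,E)$ and on the restriction to integer $\alpha=n$ only make explicit what the paper leaves implicit.
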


Suppose also that $(X,d)$ is a Polish space (i.e. complete separable metric space) and $\calB$
is its Borel $\sigma-$field. Let $\R^{+} = [0,\infty)$.

\begin{definition}
Given a metric DS $\frakT$ and a Polish space $X$, a map
$\varphi:\R^{+}\times \Omega \times X (t,\omega,x) \mapsto \varphi(t,\omega)x \in X$ is called
a measurable random dynamical system (RDS) (on X over $\frakT$) iff
\begin{itemize}
\item[(i)] $\varphi$ is $(\calB(\R^+) \otimes \calF \otimes \calB, \calB)$-measurable.
\item[(ii)] $\varphi(t+s,\omega) = \varphi(t,\vartheta_s \omega) \circ \varphi(s,\omega)$ for
all $s,t \in \R^{+}$ and $\varphi(0,\omega) =id$, for all $\omega \in \Omega$. (Cocycle property)
\end{itemize}
\end{definition}

An RDS $\varphi$ is said to be continuous or differentiable iff for all $(t,\omega) \in \R^{+} \times \Omega$,
$\varphi(t,\cdot,\omega):X \rightarrow X$ is continuous or differentiable, respectively. Similarly,
an RDS $\varphi$ is said to be time continuous iff for all $\omega \in \Omega$ and for all $x\in X$,
$\varphi(\cdot,x,\omega): \R^{+} \rightarrow X$ is continuous.

For two nonempty sets $A,B \subset X$, we put
\[
d(A,B) = \sup_{x\in A} d(x,B) \quad \mbox{ and } \quad
\rho(A,B) = \max\{d(A,B),d(B,A)\}.
\]
In fact, $\rho$ restricted to the family $\mathfrak{CB}$ of all
nonempty closed subsets on $X$ is a metric, and it is called the
Hausdorff metric. From now on, let $\calX$ be the $\sigma$-field
on $\mathfrak{CB}$ generated by open sets with respect to the Hausdorff metric $\rho$; see \cite{CasVal77}.

A set-valued map $C: \Omega \rightarrow \mathfrak{CB}$ is said to be measurable
iff $C$ is $(\calF,\calX)$-measurable. Such a map is often called a {\em closed random set}.

For a given closed random set $B$, the $\Omega$-{\em limit set} of $B$ is defined to be the set
\be\label{def:OmegaLimit}
 \Omega(B,\omega) = \Omega_B(\omega) =
 \bigcap_{T \ge 0} \overline{\bigcup_{t \ge T} \varphi(t,\vartheta_{-t}\omega)B(\vartheta_{-t} \omega)}.
\ee
\begin{definition}
A closed random set $K(\omega)$ is said to (a) attract, (b) absorb, (c) $\rho$-attract another
closed random set $B(\omega)$ iff for all $\omega \in \Omega$, respectively,
\begin{itemize}
\item[(a)]
  $\lim_{t \rightarrow \infty} d(\varphi(t,\vartheta_{-t}\omega) B(\vartheta_{-t} \omega), K(\omega)) = 0$;
\item[(b)] there exists a time $t_B(\omega)$  such that
\[
   \varphi(t,\vartheta_{-t}\omega) B(\vartheta_{-t}\omega) \subset K(\omega)  \mbox{ for all } t\ge t_B(\omega).
\]
\item[(c)]
\[
 \lim_{t\rightarrow \infty} \rho( \varphi(t,\vartheta_{-t}\omega) B(\vartheta_{-t}\omega), K(\omega)) = 0.
\]
\end{itemize}
\end{definition}

We denote by $\calF^u$ the $\sigma-$algebra of universally measurable sets associated to
the measurable space $(\Omega,\calF)$. As far as we are aware, the following definition appeared for the first time as Definition 3.4 in the fundamental work by Flandoli and Schmalfuss \cite{Flandoli-Schmalfuss_1996}, see also \cite{Brzetal10}.

\begin{definition}
\label{randomattractor} A random set $A:\Omega\rightarrow\mathfrak{CB}(X)$  is
a \textbf{random  $\mathfrak{D}$-attractor} iff
\begin{trivlist}
\item[(i)] $A$ is a
compact random set,
\item[(ii)] ${A}$ is  $\varphi$-invariant, i.e., $\bbP$-a.s.
$$
\varphi (t,\omega)A(\omega)=A(\vartheta _t\omega)
$$
\item[(iii)]   $A$  is
 $\mathfrak{D}$-attracting, in the sense that, for all $D\in \mathfrak{D}$ it holds
 $$
   \lim_{t\to \infty}d(\varphi (t, \vartheta _{-t}\omega)D(\vartheta _{-t}\omega), A(\omega))=0.
 $$
\end{trivlist}
\end{definition}

\begin{definition}
We say that an RDS $\vartheta$-cocycle $\varphi$ defined on a separable Banach space $X$ is
$\frakD$-asymptotically compact iff for each $D \in \frakD$,
for every $\omega \in \Omega$, for any positive sequence $(t_n)$ such that $t_n \rightarrow \infty$ and
for any sequence $\{x_n\}$ such that
\[
  x_n \in D(\vartheta_{-t_n} \omega), \quad \mbox{ for all } n \in \nN,
\]
the set $\{\varphi(t_n,\vartheta_{-t_n} \omega)x_n: n \in \nN \}$
is relatively compact in $X$.
\end{definition}

Now we need to state a result on the existence of a random $\frakD$-attractor, see Theorem 2.8 in \cite{Brzetal10} and references therein.
\begin{theorem}
\label{Teorema1} Assume that
$\mathfrak{T}=\left(\Omega,\mathcal{F}, \mathbb{P},
\vartheta\right)$ is a metric DS, $\mathrm{X}$ is a Polish  space,
$\mathfrak{D}$ is a nonempty class of closed and bounded  random sets on $X$ and
$\varphi$ is a continuous, $\mathfrak{D}$-asymptotically compact
RDS on $\mathrm{X}$ (over $\mathfrak{T}$). Assume that there exists a
$\mathfrak{D}$-absorbing closed and bounded  random set $ {B}$ on $X$, i.e. for any  given $D\in \mathfrak{D}$ there exists $t(D)\geq0$ such that
$\varphi (t,\vartheta _{t}\omega)D(\vartheta _{-t}\omega)\subset B(\omega)$ for all $t\geq t(D).$
Then, there exists $\mathfrak{D}$-attractor $A$ given  by
\begin{equation}
A(\mathbb{\omega})=\Omega_{B}(\mathbb{\omega}),\quad\mathbb{\omega}\in\Omega, \label{A}
\end{equation}
with
$$
\Omega_B(\omega)=\bigcap_{T\geq0}\overline{\bigcup_{t\geq
T}\varphi(t, \vartheta_{-t} \omega,B(\vartheta_{-t}
\omega)}),\quad\omega\in\Omega.
$$
which is  $\mathcal{F}^u$-measurable\footnote{By $\mathcal{F}^u$ we understand the $\sigma$-algebra of universally measurable sets associated to
the measurable space $(\Omega,\mathcal{F})$, see the monograph \cite{Crauel_1995} by Crauel.
}.
\end{theorem}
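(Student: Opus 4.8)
The plan is to follow the classical Crauel--Flandoli / Flandoli--Schmalfuss strategy: take the candidate attractor to be the $\Omega$-limit set of the absorbing set, $A(\omega)=\Omega_B(\omega)$ as defined in \eqref{def:OmegaLimit}, and verify in turn that it is compact, $\frakD$-attracting, $\varphi$-invariant and $\calF^u$-measurable, i.e.\ all the requirements of Definition~\ref{randomattractor}. The engine is the interplay of the two hypotheses: $\frakD$-asymptotic compactness produces convergent subsequences (hence compact limit sets), while the $\frakD$-absorbing set $B$ lets me use the cocycle property to replace an arbitrary orbit by one issuing from inside $B$. I would first check nonemptiness and compactness of $A(\omega)$: fixing $\omega$, a sequence $t_n\to\infty$ and $b_n\in B(\vartheta_{-t_n}\omega)$, asymptotic compactness (applied to $B$) gives a convergent subsequence of $\{\varphi(t_n,\vartheta_{-t_n}\omega)b_n\}$ whose limit lies in $\Omega_B(\omega)$ by the very form of \eqref{def:OmegaLimit}; a diagonal argument over a countable cofinal family of times upgrades this to compactness of the closed set $\Omega_B(\omega)$.

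Next I would prove $\frakD$-attraction by contradiction. If it fails for some $D\in\frakD$, there are $\varepsilon>0$, $t_n\to\infty$ and $x_n\in D(\vartheta_{-t_n}\omega)$ with $d(\varphi(t_n,\vartheta_{-t_n}\omega)x_n,A(\omega))\ge\varepsilon$. Writing $t_n=\tau_n+t(D)$ for large $n$ and applying the cocycle property based at $\vartheta_{-\tau_n}\omega$, I factor $\varphi(t_n,\vartheta_{-t_n}\omega)x_n=\varphi(\tau_n,\vartheta_{-\tau_n}\omega)\,y_n$ with $y_n:=\varphi(t(D),\vartheta_{-t_n}\omega)x_n$; the absorption hypothesis (with base point $\vartheta_{-\tau_n}\omega$, noting $\vartheta_{-t(D)}\vartheta_{-\tau_n}\omega=\vartheta_{-t_n}\omega$) gives $y_n\in B(\vartheta_{-\tau_n}\omega)$. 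Since $\tau_n\to\infty$, asymptotic compactness extracts a subsequence of $\varphi(\tau_n,\vartheta_{-\tau_n}\omega)y_n$ converging to some $y\in\Omega_B(\omega)=A(\omega)$, so $d(\cdot,A(\omega))\to0$ along that subsequence, contradicting the lower bound $\varepsilon$.

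The delicate step --- and the one I expect to be the main obstacle --- is $\varphi$-invariance $\varphi(t,\omega)A(\omega)=A(\vartheta_t\omega)$, since it forces continuity and asymptotic compactness to be used together in opposite directions. For the inclusion $\subseteq$ I take $y=\lim_k\varphi(s_k,\vartheta_{-s_k}\omega)b_k\in\Omega_B(\omega)$, push it through the continuous map $\varphi(t,\omega)$, and use the cocycle identity together with $\vartheta_{-s_k}\omega=\vartheta_{-(t+s_k)}(\vartheta_t\omega)$ to recognize $\varphi(t,\omega)y$ as a limit of orbits based at $\vartheta_t\omega$ started in $B$, hence an element of $\Omega_B(\vartheta_t\omega)$. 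For the reverse inclusion $\supseteq$, given $z\in\Omega_B(\vartheta_t\omega)$ written along $\sigma_k\to\infty$, I set $s_k=\sigma_k-t$, use asymptotic compactness to extract a convergent subsequence $\varphi(s_k,\vartheta_{-s_k}\omega)b_k\to w\in\Omega_B(\omega)$, and then continuity of $\varphi(t,\omega)$ yields $z=\varphi(t,\omega)w\in\varphi(t,\omega)A(\omega)$. The care needed is that the first inclusion merely passes a known limit forward through $\varphi(t,\omega)$, whereas the second must first produce precompactness of the \emph{pre-images} before applying continuity.

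Finally I would settle measurability, which is why the conclusion asserts only $\calF^u$-measurability. For a fixed closed ball $C\subset X$, the set $\{\omega:\,A(\omega)\cap C\ne\emptyset\}$ can be written as the projection onto $\Omega$ of a jointly measurable subset of $\Omega\times\R^{+}\times X$, built from the joint measurability of $\varphi$ (part (i) of the RDS definition) and the separability of $X$. Such projections of $(\calF\otimes\calB(\R^+)\otimes\calB)$-measurable sets are in general only universally measurable, so the measurable projection theorem delivers $\{\omega:A(\omega)\cap C\ne\emptyset\}\in\calF^u$; ranging over a countable base and invoking the standard measurable-selection characterization of $(\calF^u,\calX)$-measurability for closed random sets then shows $A:\Omega\to\mathfrak{CB}(X)$ is $\calF^u$-measurable, completing the verification of Definition~\ref{randomattractor}.
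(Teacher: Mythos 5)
The paper itself does not prove Theorem \ref{Teorema1}: it is quoted from \cite{Brzetal10} (Theorem 2.8 there), so there is no in-paper argument to compare yours against line by line. What you have written is the standard Crauel--Flandoli / Flandoli--Schmalfuss proof, which is also the route taken in the cited source: nonemptiness and compactness of $\Omega_B(\omega)$ from asymptotic compactness plus a diagonal extraction, attraction by contradiction through the absorbing set, invariance by running continuity and asymptotic compactness in opposite directions, and universal measurability via the measurable projection theorem. The outline is sound, and your invariance argument in particular is handled with exactly the right care about which inclusion needs precompactness of preimages before continuity can be applied.

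Two technical points deserve attention. First, in the attraction step you factor $t_n=\tau_n+t(D)$ and absorb over the \emph{fixed} elapsed time $t(D)$ at the moving fibre $\vartheta_{-\tau_n}\omega$; but the absorption time is in general a function of the fibre (this is explicit in the paper's own definition of absorption and in the proof of Theorem \ref{thm-main}(i), where $t_D(\omega)$ depends on $\omega$), so the claim $y_n\in B(\vartheta_{-\tau_n}\omega)$ is not automatic. The repair is standard: choose $\tau_n\to\infty$ slowly enough that $t_n-\tau_n\ge t_D(\vartheta_{-\tau_n}\omega)$, which is possible because for each fixed $\tau$ the required inequality holds for all large $n$. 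Second, your compactness step and the inclusion $\Omega_B(\vartheta_t\omega)\subseteq\varphi(t,\omega)\Omega_B(\omega)$ apply $\mathfrak{D}$-asymptotic compactness to sequences issuing from $B$ itself; this requires $B\in\mathfrak{D}$, which the statement of Theorem \ref{Teorema1} does not literally assert but which is clearly the intended hypothesis (and is what is actually verified in Theorem \ref{thm-main}(i), where $B\in\mathfrak{DR}$). With these two clarifications your proof is correct and matches the approach of the reference the paper relies on.
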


\begin{remark}\label{RACDFremark}
{\rm
If $\frakD$ contains every bounded and closed nonempty deterministic subsets of $X$, then
as a consequence of this theorem, of Theorem 2.1 in \cite{CraDebFla95}, and of Corollary 5.8
in \cite{Cra99} we obtain that the random attractor $A$ is given by
\begin{equation}\label{equ:2_11}
   A(\omega) = \overline{\bigcup_{C \subset X} \Omega_C(\omega)} \quad \bbP - \mbox{ a.s },
\end{equation}
where the union in \eqref{equ:2_11} is made for all bounded and closed nonempty deterministic
subsets $C$ of $X$.
}\end{remark}
\subsection{Random dynamical systems generated by the NSEs}
We fix $\delta<1/2$ and $\xi \in (\delta,1/2)$ and put $\Omega = \Omega(\xi,E)$.
Then we define a map $\varphi = \varphi_\alpha : \R_{+} \times \Omega \times H \rightarrow H$ by
\begin{equation}\label{def:varphi}
\varphi: \R_{+} \times \Omega \times H  \ni (t,\omega,\x) \mapsto \vv(t,\ovz(\omega),\x-\ovz(\omega)(0)) + \ovz(\omega)(t) \in H,
\end{equation}
where $\vv(t,\omega,\vv_0)=\ovz_\alpha(t,\omega,\vv_0)$ is the solution to problem (\ref{equ:ode}-\ref{equ:v0}). \delb{For simplicity of notation we put $\ovz= \ovz_\alpha$.}
Because $\ovz(\omega) \in C_{1/2}(\R,X)$, $\ovz(\omega)(0)$ is a well-defined element of $H$
and hence $\varphi$ is well defined.  It can be shown that $(\varphi,\vartheta)$ is a
random dynamical system (\cite[Theorem 6.1]{BrzGolLeG15}).


Suppose that Assumption~\ref{ass:radon} is satisfied. If $u_s \in H$, $s\in \R$, $f\in V^\prime$
and $W_t$, $t \in \R$ is a two-sided Wiener process introduced after \eqref{equ:Wiener}
such that the Cameron-Martin (or Reproducing Kernel Hilbert) space of the Gaussian measure
$\calL(w_1)$ is equal to $K$. A process $\vu(t)$, $t\ge 0$, with trajectories in
$C([s,\infty);H)\cap L^2_{\loc}([s,\infty);V)\cap L^2_{\loc}([s,\infty);\mathbb{L}^4(\bS))$ is a solution
to problem \eqref{sNSEs} iff $\vu(s) = \vu_s$ and for any $\vphi \in V$, $t > s$,
\be\label{def:sol sNSE}
\begin{aligned}
(\vu(t),\vphi) &= (\vu(s),\vphi) - \nu \int_s^t (\A\vu(r),\vphi)dr - \int_s^t b(\vu(r),\vu(r),\vphi)dr \\
               & - \int_s^t (\CC\vu(r),\vphi) dr + \int_s^t (\vf,\vphi)dr
               + \int_s^t \langle \vphi, dW_r \rangle.
\end{aligned}
\ee

In the framework as above, suppose that $\vu(t) = \ovz_\alpha(t) + \vv_\alpha(t)$, $t \ge s$,
where $\vv_\alpha$ is the unique solution to problem \eqref{equ:ode}--\eqref{equ:v0} with
initial data $\vu_0 - \ovz_\alpha(s)$ at time $s$. If the process $\vu(t)$, $t \ge s$, has
trajectories in $C([s,\infty);H)\cap L^2_{\loc}([s,\infty);V)\cap L^2_{\loc}([s,\infty);\mathbb{L}^4(\bS))$, then
it is a solution to problem \eqref{sNSEs}. Vice-versa, if a process $\vu(t)$, $t \ge s$,
with trajectories in   $C([s,\infty);H)\cap L^2_{\loc}([s,\infty);V)\cap L^2_{\loc}([s,\infty);\mathbb{L}^4(\bS))$
is a solution to problem \eqref{sNSEs}, then for any $\alpha \ge 0$, a process $\vv_\alpha(t)$, $t \ge s$,
defined by $\ovz_\alpha(t) = \vu(t) - \vv_\alpha(t)$, $t\ge s$, is a solution to \eqref{equ:ode} on $[s,\infty)$.

Our previous results yield the existence and the uniqueness of solutions to problem \eqref{sNSEs} as well
as its continuous dependence on the data (in particular on the initial value $\vu_0$ and the force $\vf$).
Moreover, if we define, for $\x \in H$, $\omega \in \Omega$, and $t \ge s$,
\be\label{def:uv}
\vu(t,s; \omega,\vu_0) := \varphi(t-s; \vartheta_s \omega)\vu_0 =
 \vv (t,s; \omega,\vu_0 -\ovz(s)) + \ovz(t),
\ee
then for each $s\in \R$ and each $\vu_0 \in H$, the process $\vu(t)$, $t \ge s$, is a solution to problem
\eqref{sNSEs}.


We have the Poincar\'{e} inequalities
\be\label{equ:Poincare}
 \begin{aligned}
   \|\vu\|_V^2 &\ge \lambda_1 \|\vu\|^2, \quad \mbox{ for all } \vu \in V, \\
   \|\A\vu\|^2 &\ge \lambda_1 \|\vu\|^2, \quad \mbox{ for all } \vu \in
         \calD(A)\cap V.
 \end{aligned}
\ee

For any $\vu,\vv \in V$, we define a new scalar product $[\cdot,\cdot]:V \times V \rightarrow \R$ by the formula
$[\vu,\vv] = \nu(\vu,\vv)_V - \nu \frac{\lambda_1}{2}(\vu,\vv)$. Clearly,
$[\cdot,\cdot]$ is bilinear and symmetric. From \eqref{poincare}, we can prove
that $[\cdot,\cdot]$ define an inner product in $V$ with the norm
$[\cdot]=[\cdot,\cdot]^{1/2}$, which is equivalent to the norm $\|\cdot\|_V$.

The following lemma is given in \cite[Lemma 6.5]{BrzGolLeG15}, however
the proof there is not quite
correct.
The bound on the nonlinear term $b(\vv,\ovz,\vv)$ there was not
treated correctly, hence the power on the stochastic term
$\|\ovz\|_{\mathbb{L}^4}$ was not correctly stated. The error propagated
to the rest of the paper.
We present a corrected proof here.
\begin{lemma}\label{newlemma6_5}
\label{lem:NSEsol}
Suppose that $\vv$ is a solution to problem \eqref{equ:ode} on the time interval $[a,\infty)$ with
$\ovz \in L^4_{\loc}(\R^+, \mathbb{L}^4(\bS)) \cap L^2_{\loc}(\R^+,V^\prime)$ and $\alpha \ge 0$. Denote
$\vg(t) =\alpha \ovz(t) - \B(\ovz(t),\ovz(t))$, $t\in [a,\infty)$. Then, for any $t \ge \tau \ge a$,
\be\label{equ:8_10}
\begin{aligned}
\|\vv(t)\|^2 &\le \|\vv(\tau)\|^2
\exp\left(-\nu \lambda_1(t-\tau)+ \frac{27C^4}{4\nu^3} \int_\tau^t \|\ovz(s)\|^4_{\mathbb{L}^4} ds\right) \\
 & + \frac{3}{\nu} \int_\tau^t ( \|\vg(s)\|^2_{V^\prime} + \|\vf(s)\|_{V^\prime}^2 )
 \exp \left(-\nu \lambda_1(t-\tau)+ \frac{27C^4}{4\nu^3} \int_s^t \|\ovz(\xi)\|^4_{\mathbb{L}^4} d\xi \right)
 ds
\end{aligned}
\ee
\be\label{equ:8_11}
\begin{aligned}
\|\vv(t)\|^2 &= \|\vv(\tau)\|^2 e^{-\nu \lambda_1(t-\tau)}  \\
& + 2 \int_\tau^t e^{-\nu \lambda_1(t-s)}(b(\vv(s),\ovz(s),\vv(t)) + \langle \vg(s),\vv(s) \rangle + \langle\vf,\vv(s)\rangle
    - [\vv(s)]^2) ds
\end{aligned}
\ee
\end{lemma}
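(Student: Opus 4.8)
The plan is to run the standard energy method on equation \eqref{equ:ode}, testing it against the solution $\vv$ itself. Taking the $H$-inner product of \eqref{equ:ode} with $\vv$, the Coriolis term drops out since $(\CC\vv,\vv)=0$ by \eqref{Cuu}, and the viscous term gives the dissipation $\nu\|\vv\|_V^2=\nu(\A\vv,\vv)$. The only delicate piece is the nonlinear term $(\B(\vv+\ovz,\vv+\ovz),\vv)=b(\vv+\ovz,\vv+\ovz,\vv)$; expanding it into its four pieces and using the cancellation $b(\vu,\vw,\vw)=0$ from \eqref{skew} to discard $b(\vv,\vv,\vv)$ and $b(\ovz,\vv,\vv)$ leaves exactly $b(\vv,\ovz,\vv)+b(\ovz,\ovz,\vv)$. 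Since $\vg=\alpha\ovz-\B(\ovz,\ovz)$, the combination $\alpha(\ovz,\vv)-b(\ovz,\ovz,\vv)$ is just $\langle\vg,\vv\rangle$, and I obtain the master energy balance
\[
  \tfrac12\tfrac{d}{dt}\|\vv\|^2+\nu\|\vv\|_V^2 = -\,b(\vv,\ovz,\vv)+\langle\vg,\vv\rangle+\langle\vf,\vv\rangle .
\]
Both \eqref{equ:8_10} and \eqref{equ:8_11} are consequences of this one identity.

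For the exact identity \eqref{equ:8_11} I would estimate nothing. Instead I split the dissipation through the equivalent inner product $[\cdot,\cdot]$: because $[\vv]^2=\nu\|\vv\|_V^2-\tfrac{\nu\lambda_1}{2}\|\vv\|^2$, one has $2\nu\|\vv\|_V^2=2[\vv]^2+\nu\lambda_1\|\vv\|^2$, so the balance becomes
\[
  \tfrac{d}{dt}\|\vv\|^2+\nu\lambda_1\|\vv\|^2 = -2[\vv]^2-2b(\vv,\ovz,\vv)+2\langle\vg,\vv\rangle+2\langle\vf,\vv\rangle .
\]
Multiplying by the integrating factor $e^{\nu\lambda_1 s}$, recognising the left-hand side as $\tfrac{d}{ds}\big(e^{\nu\lambda_1 s}\|\vv(s)\|^2\big)$, and integrating over $[\tau,t]$ gives \eqref{equ:8_11}. (I record that the argument $\vv(t)$ appearing inside the integrand of \eqref{equ:8_11} should read $\vv(s)$, and that the nonlinear contribution carries the sign dictated by the expansion above.)

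For the a priori bound \eqref{equ:8_10} the content is in estimating the three terms on the right of the master balance. The nonlinear term is the crucial one, and it is precisely where the earlier proof went astray. Using skew-symmetry \eqref{skew} I first rewrite $b(\vv,\ovz,\vv)=-b(\vv,\vv,\ovz)$, so that $\ovz$ now occupies the outer $\mathbb{L}^4$-slot and $\vv$ the middle $V$-slot for which estimate \eqref{b_estimate3} is designed; this yields $|b(\vv,\ovz,\vv)|\le C\|\vv\|_{\mathbb{L}^4}\|\vv\|_V\|\ovz\|_{\mathbb{L}^4}$, and the Ladyzhenskaya-type inequality \eqref{equ:L4} upgrades it to $C\|\vv\|^{1/2}\|\vv\|_V^{3/2}\|\ovz\|_{\mathbb{L}^4}$. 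Young's inequality with the conjugate exponents $4/3$ and $4$ then absorbs the factor $\|\vv\|_V^{3/2}$ into the dissipation and leaves a term $\frac{27C^4}{4\nu^3}\|\ovz\|_{\mathbb{L}^4}^4\|\vv\|^2$, the fourth power of $\|\ovz\|_{\mathbb{L}^4}$ being exactly what the assumption $\ovz\in L^4_{\loc}$ controls, and the numerical constant being fixed by the split of the dissipation that still reserves enough of $\nu\|\vv\|_V^2$ to supply the decay rate $\nu\lambda_1$ through the bracket identity and the Poincar\'e inequality \eqref{equ:Poincare}. The two linear terms are handled by Cauchy--Schwarz and Young, $|\langle\vg,\vv\rangle|+|\langle\vf,\vv\rangle|\le \varepsilon\|\vv\|_V^2+\frac{C_\varepsilon}{\nu}(\|\vg\|_{V^\prime}^2+\|\vf\|_{V^\prime}^2)$, with $\varepsilon$ tuned so the surviving forcing constant is $3/\nu$. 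Collecting everything produces the differential inequality
\[
  \tfrac{d}{dt}\|\vv\|^2+\Big(\nu\lambda_1-\tfrac{27C^4}{4\nu^3}\|\ovz(s)\|_{\mathbb{L}^4}^4\Big)\|\vv\|^2 \le \tfrac{3}{\nu}\big(\|\vg(s)\|_{V^\prime}^2+\|\vf(s)\|_{V^\prime}^2\big),
\]
and the integrating-factor form of Gronwall's lemma, with the time-dependent rate $\nu\lambda_1-\tfrac{27C^4}{4\nu^3}\|\ovz\|_{\mathbb{L}^4}^4$, delivers \eqref{equ:8_10}.

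The hard part, and the reason the statement needed correction, is this nonlinear estimate: one must reorder the arguments of $b$ before applying \eqref{b_estimate3} so that $\ovz$ sits in the $\mathbb{L}^4$ slot, and then choose the Young exponents so that the surviving power of $\|\ovz\|_{\mathbb{L}^4}$ is the fourth rather than a lower one. Everything else is bookkeeping: the bracket identity converts the viscous dissipation into the clean decay rate $\nu\lambda_1$, and the Gronwall step transfers the integrated coefficient $\int\|\ovz\|_{\mathbb{L}^4}^4$ into the exponential weights appearing in both integrals of \eqref{equ:8_10}.
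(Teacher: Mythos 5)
Your proof follows essentially the same route as the paper's: the energy identity \eqref{eqn:812} obtained by testing \eqref{equ:ode} with $\vv$ and using \eqref{skew} and \eqref{Cuu}, the bound $|b(\vv,\ovz,\vv)|\le C\|\vv\|^{1/2}\|\vv\|_V^{3/2}\|\ovz\|_{\mathbb{L}^4}$ via \eqref{b_estimate3} and \eqref{equ:L4} followed by Young's inequality with exponents $4/3$ and $4$, Gronwall's lemma for \eqref{equ:8_10}, and the bracket identity with the variation-of-constants formula for \eqref{equ:8_11}. Your side remarks are also consistent with the paper's own derivation: the $\vv(t)$ in the integrand of \eqref{equ:8_11} should indeed read $\vv(s)$, the nonlinear term there carries the sign coming from \eqref{eqn:812}, and the exact numerical constant in the exponential weight is a bookkeeping matter on which the paper itself is not fully consistent (its proof produces $27C^4/(16\nu^3)$ while the statement displays $27C^4/(4\nu^3)$), none of which affects the validity of the argument.
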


\begin{proof}
By \cite[Lemma III.1.2]{Tem79}, we have
$\frac{1}{2}\partial_t \|\vv(t)\|^2 = ( \partial_t \vv(t),\vv(t))$. Hence
\begin{equation}\label{eqn:812}
\begin{aligned}
\frac{1}{2}\frac{d}{dt} \|\vv\|^2 &=
-\nu(\A\vv,\vv) - (\CC\vv,\vv) - (\B(\vv,\vv),\vv) - (B(\ovz,\vv),\vv) \\
& \qquad -(B(\vv,\ovz),\vv) + \langle \vg,\vv\rangle +
\langle\vf,\vv\rangle\\
& = -\nu \|\vv\|^2_V - b(\vv,\ovz,\vv) +
\langle\vg,\vv \rangle + \langle \vf,\vv\rangle.
\end{aligned}
\end{equation}
From \eqref{b_estimate3} and invoking the Young inequality,
we have
\begin{align*}
|b(\vv,\ovz,\vv)| &\le C\|\vv\|_{\mathbb{L}^4} \|\vv\|_V \|\ovz\|_{\mathbb{L}^4} \\
        & \le C \| \vv \|^{1/2} \| \vv \|_V^{3/2} \|\ovz\|_{\mathbb{L}^4} \\
        &\le \frac{\nu}{2}\|\vv\|^2_V +
     \frac{27 C^4}{32\nu^3} \|\vv\|^2 \|\ovz\|^4_{\mathbb{L}^4},
\end{align*}
and
\begin{align*}
|\langle \vg,\vv\rangle+\langle\vf,\vv\rangle| &\le \|\vg\|_{V^\prime} \|\vv\|_V   + \|\vf\|_{V^\prime}\|\vv\|_{V} \\
 &\le \frac{\nu}{3}     \|\vv\|_V^2 + \frac{3}{2\nu} \|g\|^2_{V^\prime} + \frac{3}{2\nu}
 \|\vf\|^2_{V^\prime}.
\end{align*}
Hence from \eqref{eqn:812} and \eqref{equ:Poincare}, we get
\begin{align*}
\frac{d}{dt} \|\vv(t)\|^2 &\le  -\nu\| \vv(t) \|_V^2 +
\frac{27C^4}{4\nu^3}\|\ovz(t)\|^4_{\mathbb{L}^4} \|\vv(t)\|^2 +
  \frac{3}{\nu}\|\vg(t)\|^2_{V^\prime} + \frac{3}{\nu} \|\vf\|^2_{V^\prime} \\
  &\le\left(-\nu\lambda_1 + \frac{27C^4}{16\nu^3}\|\ovz(t)\|^4_{\mathbb{L}^4}\right)
  \|\vv(t)\|^2
  + \frac{3}{\nu}\|\vg(t)\|^2_{V^\prime} + \frac{3}{\nu} \|\vf\|^2_{V^\prime}.
\end{align*}
Next, using the Gronwall Lemma, we arrive at \eqref{equ:8_10}.

By
adding and subtracting $\nu \frac{\lambda_1}{2}\|\vv(t)\|^2$
from \eqref{eqn:812}
we find that
\begin{align}
\frac{d}{dt}\|\vv(t)\|^2 &+ \nu \lambda_1 \|\vv(t)\|^2 + 2[\vv(t)]^2\\
    & = 2b(\vv(t),\ovz(t),\vv(t)) + 2 \langle \vg(t),\vv(t)\rangle + 2\langle\vf(t),\vv(t)\rangle.
\end{align}
Hence \eqref{equ:8_11} follows by the variation of constants formula.
\end{proof}

\begin{lemma} \label{lem_8.5}
Under the above assumptions, for each $\omega \in \Omega(\xi,E)$,
\[
  \lim_{t \rightarrow -\infty} \| \ovz(\omega)(t) \|^2
         \exp \left( \nu \lambda_1 t  +
     \frac{27C^4}{16\nu^3} \int_t^0  \|\ovz(\omega)(s)\|^4_{\mathbb{L}^4} ds \right) = 0.
\]
\end{lemma}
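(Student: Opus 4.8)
The plan is to show that the exponent inside the bracket tends to $-\infty$ \emph{linearly} in $|t|$ as $t\to-\infty$, while the prefactor $\|\ovz(\omega)(t)\|^2$ grows at most polynomially; the product of a polynomial and an exponentially decaying factor then vanishes. The decisive ingredient is the strong law of large numbers \eqref{equ:Ezalpha} together with the smallness condition \eqref{equ:Ezalpha1}. A naive pointwise bound on the running integral $\int_t^0\|\ovz(\omega)(s)\|^4_{\mathbb{L}^4}\,ds$ is far too crude, since the sup-norm growth of $\ovz$ would only give a bound of order $|t|^3$ and force the exponent to diverge to $+\infty$; it is precisely the time-averaging supplied by the ergodic theorem that makes the statement true.

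First I would record the growth of the prefactor. By \eqref{equ:zhatgrow} (equivalently, since $\ovz(\omega)\in C_{1/2}(\R,X)$) there is a constant $C_\omega$ with $\|\ovz(\omega)(t)\|_X\le C_\omega(1+|t|^{1/2})$, so, using $\|\cdot\|=\|\cdot\|_H\le\|\cdot\|_X$,
\[
  \|\ovz(\omega)(t)\|^2 \le C_\omega^2(1+|t|^{1/2})^2 \le 2C_\omega^2(1+|t|),\qquad t\in\R .
\]

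Next I would control the running integral by the ergodic average. Put $L:=\E\|\ovz_\alpha(0)\|^4_X$. Since $\|\cdot\|_{\mathbb{L}^4}\le\|\cdot\|_X$, the strong law \eqref{equ:Ezalpha} yields
\[
  \limsup_{t\to-\infty}\frac1{|t|}\int_t^0\|\ovz(\omega)(s)\|^4_{\mathbb{L}^4}\,ds
  \le \lim_{t\to-\infty}\frac1{|t|}\int_t^0\|\ovz(\omega)(s)\|^4_X\,ds = L .
\]
By \eqref{equ:Ezalpha1}, which holds once $\alpha\ge\alpha_0$, we have $L<8\nu^4\lambda_1/(27C^4)$, hence $\frac{27C^4}{16\nu^3}L<\frac{\nu\lambda_1}{2}$. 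Fix $\epsilon>0$ so small that $\beta:=\nu\lambda_1-\frac{27C^4}{16\nu^3}(L+\epsilon)>0$. Then for all $t$ sufficiently negative $\int_t^0\|\ovz(\omega)(s)\|^4_{\mathbb{L}^4}\,ds\le(L+\epsilon)|t|=-(L+\epsilon)t$, whence
\[
  \nu\lambda_1 t+\frac{27C^4}{16\nu^3}\int_t^0\|\ovz(\omega)(s)\|^4_{\mathbb{L}^4}\,ds
  \le \nu\lambda_1 t-\frac{27C^4}{16\nu^3}(L+\epsilon)\,t=\beta\,t .
\]

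Combining the two estimates, for $t$ sufficiently negative
\[
  \|\ovz(\omega)(t)\|^2\exp\!\left(\nu\lambda_1 t+\frac{27C^4}{16\nu^3}\int_t^0\|\ovz(\omega)(s)\|^4_{\mathbb{L}^4}\,ds\right)
  \le 2C_\omega^2(1+|t|)\,e^{\beta t},
\]
and the right-hand side, being a polynomial times a decaying exponential (recall $\beta>0$ and $t<0$), tends to $0$ as $t\to-\infty$, which is the claim. The only delicate point is the passage from the pointwise sup-norm growth of $\ovz$ to the sharp linear growth of its time integral: this is exactly where \eqref{equ:Ezalpha} is indispensable, and it confines the argument to those $\omega$ for which the strong law holds, e.g. $\omega\in\hat{\Omega}(\xi,E)$.
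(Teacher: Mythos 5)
Your proof is correct and is essentially the argument the paper itself relies on: the paper states Lemma \ref{lem_8.5} without an explicit proof, but the identical combination of the strong law \eqref{equ:Ezalpha} with the smallness condition \eqref{equ:Ezalpha1} (yielding the eventual bound $\frac{27C^4}{16\nu^3}\int_t^0\|\ovz(s)\|^4_{\mathbb{L}^4}\,ds\le\frac{\nu\lambda_1}{2}|t|$ as in \eqref{equ:8_20}) together with the at-most-polynomial growth of $\|\ovz(t)\|$ from \eqref{equ:zhatgrow} is exactly what is deployed in the proofs of Lemmas \ref{lem:8_4}, \ref{lem:8_5} and \ref{lem:8_7}. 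Your closing caveat is also apt: the limit can only be asserted on the shift-invariant full-measure set where \eqref{equ:Ezalpha} holds (i.e.\ on $\hat{\Omega}(\xi,E)$), not literally for every $\omega\in\Omega(\xi,E)$ as the lemma's wording suggests.
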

\begin{lemma}\label{lem_8.6}
Under the above assumptions, for each $\omega \in \Omega(\xi,E)$,
\[
\int_{-\infty}^0 [1+ \|\ovz(\omega)(t)\|^2_{\mathbb{L}^4} + \|\ovz(\omega)(t)\|^4_{\mathbb{L}^4}]
          \exp \left(
 \nu \lambda_1 t  +  \frac{27C^4}{16\nu^3}\int_t^0 \|\ovz(\omega)(s)\|^4_{\mathbb{L}^4} ds \right) < \infty.
\]
\end{lemma}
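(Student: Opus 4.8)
The plan is to show that the exponential weight decays faster than the polynomially growing prefactor, so that the integrand is, up to a multiplicative constant, a polynomial times a genuinely decaying exponential. Write, for $t\le 0$,
\[
\Phi(t) := \nu\lambda_1 t + \frac{27C^4}{16\nu^3}\int_t^0 \|\ovz(\omega)(s)\|^4_{\mathbb{L}^4}\,ds,
\]
so that the claim is that $\int_{-\infty}^0 \big[1+\|\ovz(\omega)(t)\|^2_{\mathbb{L}^4}+\|\ovz(\omega)(t)\|^4_{\mathbb{L}^4}\big]\,e^{\Phi(t)}\,dt<\infty$. As $t\to-\infty$ the term $\nu\lambda_1 t$ decays linearly while $\frac{27C^4}{16\nu^3}\int_t^0\|\ovz(s)\|^4_{\mathbb{L}^4}\,ds$ grows; the whole point is that the strong law of large numbers \eqref{equ:Ezalpha} together with the smallness condition \eqref{equ:Ezalpha1} forces the growth rate to be strictly smaller than the decay rate, and this is essentially the integrated analogue of Lemma~\ref{lem_8.5}.

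The key step is to quantify this. Since $\|\cdot\|_{\mathbb{L}^4}\le\|\cdot\|_X$, I would invoke \eqref{equ:Ezalpha} to obtain
\[
\limsup_{t\to-\infty}\frac{1}{|t|}\int_t^0\|\ovz(\omega)(s)\|^4_{\mathbb{L}^4}\,ds
\le \lim_{t\to-\infty}\frac{1}{|t|}\int_t^0\|\ovz(\omega)(s)\|^4_X\,ds
= \E\|\ovz(0)\|^4_X =: \beta.
\]
By \eqref{equ:Ezalpha1} (valid for $\alpha\ge\alpha_0$) one has $\frac{27C^4}{16\nu^3}\beta<\frac{\nu\lambda_1}{2}<\nu\lambda_1$, which leaves ample room to fix $\eps>0$ with $\frac{27C^4}{16\nu^3}(\beta+\eps)<\nu\lambda_1$ and to set $\gamma:=\nu\lambda_1-\frac{27C^4}{16\nu^3}(\beta+\eps)>0$. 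The displayed $\limsup$ then yields $T_0=T_0(\omega)>0$ such that $\frac{1}{|t|}\int_t^0\|\ovz(s)\|^4_{\mathbb{L}^4}\,ds\le\beta+\eps$ for all $t\le -T_0$, whence
\[
\Phi(t)\le -\nu\lambda_1|t|+\frac{27C^4}{16\nu^3}(\beta+\eps)|t|=-\gamma|t|,\qquad t\le -T_0.
\]

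Finally I would control the prefactor by the sublinear growth of $\ovz$. Estimate \eqref{equ:zhatgrow} gives $\|\ovz(\omega)(t)\|_{\mathbb{L}^4}\le\|\ovz(\omega)(t)\|_X\le C_\omega(1+|t|^{1/2})$, so the bracket is bounded by a polynomial $P(|t|)$ of degree $2$. For $t\le -T_0$ the integrand is then at most $P(|t|)\,e^{-\gamma|t|}$, which is integrable on $(-\infty,-T_0]$; on the compact interval $[-T_0,0]$ the integrand is continuous, since $\ovz(\omega)\in C(\R,X)\subset C(\R,\mathbb{L}^4(\bS))$ and $\Phi$ is continuous because $s\mapsto\|\ovz(s)\|^4_{\mathbb{L}^4}$ is locally integrable, hence it is bounded there and its integral is finite as well. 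Adding the two contributions gives the claim. The main obstacle is the estimate of the previous paragraph: everything hinges on the averaged integral having $\limsup$ strictly below the threshold $\frac{16\nu^4\lambda_1}{27C^4}$ that keeps $\gamma$ positive, which is exactly what the ergodic limit \eqref{equ:Ezalpha} and the choice of $\alpha$ in \eqref{equ:Ezalpha1} guarantee. One should also keep in mind that \eqref{equ:Ezalpha} holds only on the invariant full-measure set $\Omega_\alpha(\xi,E)$ (equivalently on $\hat{\Omega}(\xi,E)$), which is the standing domain for these $\omega$-wise statements.
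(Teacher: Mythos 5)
Your proof is correct and follows essentially the same route as the paper: Lemma~\ref{lem_8.6} is stated there without an explicit proof, but your argument (exponential decay of the weight via the ergodic limit \eqref{equ:Ezalpha} and the smallness condition \eqref{equ:Ezalpha1}, combined with the at most polynomial growth of $\|\ovz(\omega)(t)\|_{\mathbb{L}^4}$ from \eqref{equ:zhatgrow}) is exactly the technique the authors deploy for the analogous statements, see the proofs of Lemma~\ref{lem:8_6} and Lemma~\ref{lem:8_7}. Your closing caveat that \eqref{equ:Ezalpha} holds only on the invariant full-measure subset of $\Omega(\xi,E)$, which is the intended domain of the lemma, is also accurate.
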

\begin{definition}\label{def classR}
 A function $r:\Omega \rightarrow (0,\infty)$ belongs to the class $\frakR$ if and only
if
\[
 \limsup_{t \rightarrow -\infty} r(\vartheta_{-t} \omega)^2
\exp\left(\nu \lambda_1 t + \frac{27C^4}{16\nu^3}\int_t^0  \|\ovz(\omega)(s)\|^4_{\mathbb{L}^4} ds\right) = 0,
\]
where $C>0$ is the constant appearing in \eqref{equ:Ezalpha1}.

We denote by $\mathfrak{DR}$ the class of all closed and bounded random sets $D$ on
$H$ such that the function
$\omega \mapsto r(D(\omega)) := \sup \{ \|\vx\|_H : \vx \in D(\omega) \}$
belongs to the class $\mathfrak{R}$.
\end{definition}

\begin{proposition}
\label{prop_class_R} Define functions $r_i:\Omega \to (0,\infty)$,
$i=1,2,3,4,5$ by the following formulae, for $\omega\in \Omega$,
\begin{eqnarray*}
r_1^2(\omega)&:=& \|\ovz(\omega)(0)\|_H^2, \\
r_2^2(\omega)&:=& \sup_{s\leq 0}\| \ovz (\omega)(s)\|_H^2
\exp \left(\nu\lambda_1s+\frac{27C^4}{16\nu^3}\int^0_{s}\|\ovz(\omega)(r)\|^4_{\mathbb{L}^4}\,dr\right) \\
r_3^2(\omega)&:=& \int_{-\infty}^0 \| \ovz (\omega)(s)\|^2_H
\exp \left( \nu\lambda_1s+\frac{27C^4}{16\nu^3}\int^0_{s}\|\ovz(\omega)(r)\|^4_{\mathbb{L}^4}\,dr \right)\,ds \\
r_4^2(\omega)&:=& \int_{-\infty}^0 \| \ovz (\omega)(s)\|^4_{\mathbb{L}^4}
\exp \left(\nu\lambda_1s+\frac{27C^4}{16\nu^3}\int^0_{s}\|\ovz(\omega)(r)\|^4_{\mathbb{L}^4}\,dr \right)\,ds\\
r_5^2(\omega)&:=& \int_{-\infty}^0
\exp\left(\nu\lambda_1s+\frac{27C^4}{16\nu^3}\int^0_{s}\|\ovz(\omega)(r)\|^4_{\mathbb{L}^4}\,dr\right)\,ds.
\end{eqnarray*}
Then all these functions belong to the class $\mathfrak{R}$.
\end{proposition}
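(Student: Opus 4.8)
The plan is to use the stationarity/flow identity \eqref{equ:zalpha} to rewrite each $r_i$, evaluated along the orbit, as an intrinsic functional of $\ovz(\omega)$, and then to read off the limit directly from Lemmas~\ref{lem_8.5} and~\ref{lem_8.6} (which also guarantee that each $r_i$ is finite, so that $r_i:\Omega\to(0,\infty)$ is well defined). Throughout I write
\[
E_\omega(t)=\exp\Big(\nu\lambda_1 t+\tfrac{27C^4}{16\nu^3}\int_t^0\|\ovz(\omega)(s)\|^4_{\mathbb{L}^4}\,ds\Big),
\]
so that, by Definition~\ref{def classR}, proving $r_i\in\frakR$ amounts to showing $r_i(\vartheta_t\omega)^2\,E_\omega(t)\to 0$ as $t\to-\infty$.

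The computational heart is a single cancellation. From \eqref{equ:zalpha} one has $\ovz(\vartheta_t\omega)(s)=\ovz(\omega)(s+t)$; substituting $u=r+t$ in the inner integral defining the weight associated with $\vartheta_t\omega$ and splitting $\int_{s+t}^{t}=\int_{s+t}^0-\int_t^0$ yields the key identity
\[
E_{\vartheta_t\omega}(s)=E_\omega(s+t)\,E_\omega(t)^{-1},\qquad s\le 0 .
\]
Thus the spurious factor $E_\omega(t)^{-1}$ generated by the shift is exactly absorbed by the weight $E_\omega(t)$ from Definition~\ref{def classR}.

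Feeding this into the five definitions and changing variables $\sigma=s+t$ collapses each product $r_i(\vartheta_t\omega)^2 E_\omega(t)$ to an intrinsic tail. For $r_1$ one obtains $\|\ovz(\omega)(t)\|^2 E_\omega(t)$, which tends to $0$ by Lemma~\ref{lem_8.5}. For $r_2$ one obtains $\sup_{\sigma\le t}\|\ovz(\omega)(\sigma)\|^2E_\omega(\sigma)$, the supremum over $(-\infty,t]$ of the same quantity, which again tends to $0$ as $t\to-\infty$ by Lemma~\ref{lem_8.5}. For $r_3,r_4,r_5$ one obtains respectively $\int_{-\infty}^{t}\|\ovz(\omega)(\sigma)\|_H^2 E_\omega(\sigma)\,d\sigma$, $\int_{-\infty}^{t}\|\ovz(\omega)(\sigma)\|^4_{\mathbb{L}^4} E_\omega(\sigma)\,d\sigma$, and $\int_{-\infty}^{t}E_\omega(\sigma)\,d\sigma$, each of which is the tail of a finite integral and hence vanishes as $t\to-\infty$. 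Finiteness of these integrals is precisely the content of Lemma~\ref{lem_8.6}; for $r_3$ one first dominates $\|\cdot\|_H\le C\|\cdot\|_{\mathbb{L}^4}$ (continuous embedding on the compact sphere) to bring the $H$-norm under the $\mathbb{L}^4$-statement of that lemma.

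The main obstacle is getting the cancellation identity exactly right: one must track carefully how the translation $s\mapsto s+t$ acts on the time-dependent integral inside the weight, since it is this step (and only this step) that converts the shift factor into $E_\omega(t)^{-1}$. Once that bookkeeping is correct, all five assertions reduce to monotone tails controlled by Lemmas~\ref{lem_8.5} and~\ref{lem_8.6}, and no estimate beyond the elementary Sobolev embedding used for $r_3$ is required.
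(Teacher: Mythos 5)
Your proof is correct and follows essentially the same route as the paper's: both use the shift identity $\ovz(\vartheta_t\omega)(s)=\ovz(\omega)(s+t)$ (Theorem~\ref{thm:vartheta}) to change variables, cancel the exponential weight, and reduce each case to Lemma~\ref{lem_8.5} or the tail of the convergent integral in Lemma~\ref{lem_8.6}. Your cocycle identity $E_{\vartheta_t\omega}(s)=E_\omega(s+t)E_\omega(t)^{-1}$ is just a cleaner packaging of the paper's change-of-variables computation, and your explicit appeal to $\|\cdot\|_H\le C\|\cdot\|_{\mathbb{L}^4}$ for $r_3$ supplies a detail the paper leaves implicit.
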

\begin{proof}
Since by Theorem \ref{thm:vartheta},
$\ovz(\vartheta_{-t}\omega)(s)=\ovz(\omega)(s-t)$, we have
\begin{eqnarray*}
r_2^2(\vartheta_{-t} \omega)&=&
\sup_{s\leq 0}\| \ovz(\vartheta_{-t} \omega)(s)\|^2
\exp \left( \nu\lambda_1s+\frac{27C^4}{16\nu^3}
\int^0_{s}\|\ovz(\vartheta_{-t} \omega) (r)\|^4_{\mathbb{L}^4}\,dr\right) \\
&=& \sup_{s\leq 0}\| \ovz (\omega)(s-t)\|^2
 \exp \left(
        \nu\lambda_1s+\frac{27C^4}{16\nu^3}\int^0_{s}\|
        \ovz(\omega)(r-t)\|^2_{\mathbb{L}^4}\,dr \right) \\
        &=& \sup_{s\leq 0}\| \ovz (\omega)(s-t)\|^2
        \exp \left( \nu\lambda_1(s-t)+\frac{27C^4}{16\nu^3}\int^{-t}_{s-t}\|
        \ovz(\omega)(r)\|^4_{\mathbb{L}^4}\,dr \right) e^{\nu\lambda_1t} \\
        &=& \sup_{\sigma \leq -t}\| \ovz (\omega)(\sigma)\|^2
        \exp\left(\nu\lambda_1\sigma+\frac{27C^4}{16\nu^3}\int^{-t}_{\sigma}\|
        \ovz(\omega)(r)\|^4_{\mathbb{L}^4}\,dr\right) e^{\nu\lambda_1t} \\
        \end{eqnarray*}
Hence, multiplying the above by
        $\exp\left( -\nu \lambda_1 t + \frac{27C^4}{16\nu^3}\int_{-t}^0\|
        \ovz(\omega)(r)\|^4_{\mathbb{L}^4}\,dr \right)$ we get
\begin{align*}
   &     r_2^2(\vartheta_{-t} \omega)
   \exp\left(-\nu\lambda_1t+
        \frac{27C^4}{16\nu^3}\int_{-t}^0\|
        \ovz(\omega)(r)\|^4_{\mathbb{L}^4}\,dr \right) \\
   & \qquad     \leq  \sup_{\sigma \leq -t}\| \ovz (\omega)(\sigma)\|^2
        \exp \left( \nu\lambda_1\sigma+\frac{27C^4}{16\nu^3}\int^{0}_{\sigma}
\|\ovz(\omega)(r)\|^4_{\mathbb{L}^4}\,dr\right).
\end{align*}
This,  together with  Lemma \ref{lem_8.5} concludes the proof in
the case of function $r_2$.  In the case of $r_1$, we have
\begin{align*}
&  r_1^2(\vartheta_{-t} \omega)
\exp\left(-\nu\lambda_1t+
        \frac{27C^4}{16\nu^3}\int_{-t}^0
\| \ovz(\omega)(r)\|^4_{\mathbb{L}^4}\,dr \right) \\
&\qquad =
\| \ovz (\omega)(-t)\|^2
      \exp\left(-\nu\lambda_1 t+\frac{27C^4}{16\nu^3}\int^{0}_{-t}
            \|\ovz(\omega)(r)\|^4_{\mathbb{L}^4}\,dr\right).
\end{align*}
Thus, by Lemma \ref{lem_8.5} we infer that $r_1$ also belongs to
the class $\mathfrak{R}$. The argument in the case of function
$r_3$ is similar but for the sake of the completeness we include it here.
From the first part of the proof we infer that
\begin{align*}
 &       r_3^2(\vartheta_{-t} \omega)
       \exp\left(-\nu\lambda_1t+
        \frac{27C^4}{16\nu^3}\int_{-t}^0\|
        \ovz(\omega)(r)\|^4_{\mathbb{L}^4}\,dr\right) \\
& \qquad \leq
        \int_{-\infty}^{-t}\| \ovz (\omega)(\sigma)\|^2
        \exp\left(\nu\lambda_1\sigma+\frac{27C^4}{16\nu^3}\int^{0}_{\sigma}\|
        \ovz(\omega)(r)\|^4_{\mathbb{L}^4}\,dr\right)\, d\sigma.
\end{align*}
Since  by Lemma \ref{lem_8.6},
$\int_{-\infty}^{0}\| \ovz
        (\omega)(\sigma)\|^2
        \exp \left(\nu\lambda_1\sigma+\frac{27C^4}{16\nu^3}\int^{0}_{\sigma}\|
        \ovz(\omega)(r)\|^4_{\mathbb{L}^4}\,dr\right)\, d\sigma$ is finite, by
        the Lebesgue Monotone Convergence Theorem we conclude that
        $$\int_{-\infty}^{-t}\| \ovz (\omega)(\sigma)\|^2
        \exp\left(\nu\lambda_1\sigma+
          \frac{27C^4}{16\nu^3}\int^{0}_{\sigma}
\|\ovz(\omega)(r)\|^4_{\mathbb{L}^4}\,dr\right)\, d\sigma \to 0 \mbox{ as } t\to\infty.$$
The proof in the other cases is analogous.
\end{proof}

We have the following trivial results.
\begin{proposition}
The class $\mathfrak{R}$ is closed  with respect to sum,
multiplication by a constant and if $r\in\mathfrak{R}$, $0\leq
\bar{r}\leq r$, then $\bar{r}\in\mathfrak{R}$.
The class $\mathfrak{R}$ is closed  with respect to sum,
multiplication by a constant and if $r\in\mathfrak{R}$, $0\leq
\bar{r}\leq r$, then $\bar{r}\in\mathfrak{R}$.
\end{proposition}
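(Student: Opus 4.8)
The plan is to read off the three closure properties directly from the structure of the defining condition. Writing
\[
  W_t(\omega) := \exp\left(\nu\lambda_1 t + \frac{27C^4}{16\nu^3}\int_t^0 \|\ovz(\omega)(s)\|^4_{\mathbb{L}^4}\,ds\right),
\]
the condition $r\in\mathfrak{R}$ is exactly that $\limsup_{t\to-\infty} r(\vartheta_{-t}\omega)^2\, W_t(\omega) = 0$ for the $\omega$ under consideration. The crucial observation is that the weight $W_t(\omega)\ge 0$ is independent of $r$; once this is isolated, each assertion reduces to an elementary property of nonnegative quantities and of $\limsup$. I would treat the three claims in turn.

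For multiplication by a constant $c>0$, note that $(c\,r)(\vartheta_{-t}\omega)^2\,W_t(\omega) = c^2\big(r(\vartheta_{-t}\omega)^2\,W_t(\omega)\big)$, and multiplying a quantity that tends to $0$ by the fixed finite factor $c^2$ leaves the $\limsup$ equal to $0$; hence $c\,r\in\mathfrak{R}$. For the domination claim, if $0\le\bar r\le r$ pointwise on $\Omega$ then $0\le \bar r(\vartheta_{-t}\omega)^2\,W_t(\omega)\le r(\vartheta_{-t}\omega)^2\,W_t(\omega)$ for every $t$, so a squeeze between $0$ and a quantity with vanishing $\limsup$ forces $\limsup_{t\to-\infty}\bar r(\vartheta_{-t}\omega)^2\,W_t(\omega)=0$, i.e. $\bar r\in\mathfrak{R}$.

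The only step requiring a (trivial) inequality is closure under sums, and the point to be careful about is that the defining quantity is quadratic in $r$, so one cannot simply invoke subadditivity of $r\mapsto r(\vartheta_{-t}\omega)$. Instead I would use $(a+b)^2\le 2a^2+2b^2$ to get, for $r_1,r_2\in\mathfrak{R}$,
\[
  (r_1+r_2)(\vartheta_{-t}\omega)^2\,W_t(\omega) \le 2\,r_1(\vartheta_{-t}\omega)^2\,W_t(\omega) + 2\,r_2(\vartheta_{-t}\omega)^2\,W_t(\omega).
\]
Each term on the right has $\limsup$ (indeed limit) equal to $0$ by hypothesis, and since $\limsup$ is subadditive the left-hand side has $\limsup$ bounded above by $0$; being nonnegative it therefore tends to $0$, giving $r_1+r_2\in\mathfrak{R}$. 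I do not anticipate any genuine obstacle: the content here is entirely formal, the analytic substance residing in Lemmas~\ref{lem_8.5}--\ref{lem_8.6} and Proposition~\ref{prop_class_R}, and the only care needed is the passage through squares in the sum.
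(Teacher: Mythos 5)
Your proof is correct. The paper itself offers no proof, dismissing the proposition as a ``trivial result,'' and your argument --- isolating the $r$-independent weight $W_t(\omega)$, then using $c^2$-homogeneity, the squeeze $0\le \bar r^2 W_t\le r^2 W_t$, and the inequality $(a+b)^2\le 2a^2+2b^2$ together with subadditivity of $\limsup$ --- is exactly the routine verification the authors leave to the reader, with the one genuinely necessary observation (that the condition is quadratic in $r$, so the sum requires the elementary square inequality rather than plain subadditivity) correctly identified and handled.
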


Now we are ready to state and prove the main result of this
paper. A result of similar type for the Navier--Stokes equations on
some 2-dimensional unbounded domain has been discussed in
\cite{Brzetal10}.
\begin{theorem}\label{thm-main}
Consider the metric DS
$\mathfrak{T}=\left(\hat{\Omega}(\xi,\mathrm{E}),\hat{\mathcal{F}},
\hat{\mathbb{P}},\hat{\vartheta}\right)$ from Proposition
\ref{prop:metricDS}, and the RDS  $\varphi$ on $H$ over
$\mathfrak{T}$ generated by the stochastic Navier-Stokes equations on the
$2$-dimensional unit sphere with additive noise
\eqref{sNSEs} satisfying Assumption~\ref{ass:radon}.
Then the following properties hold.
 \begin{trivlist}
  \item[(i)] there exists a $\mathfrak{DR}$-absorbing set $B\in\mathfrak{DR}$;
   \item[(ii)] the RDS $\varphi$ is $\mathfrak{DR}$-asymptotically compact;
   \item[(iii)] the family  $A$ of sets defined by $A(\omega)=\Omega_B(\omega)$ for
    all $\omega\in\Omega,$ is the minimal $\mathfrak{DR}$-attractor for $\varphi,$
    is $\hat{\mathcal{F}}$-measurable, and
    \begin{equation}\label{RACDFH}
    A(\omega)=\overline{\bigcup_{C\subset H}\Omega_C(\omega)}\quad\hat{\mathbb{P}}-a
    .s.,
    \end{equation}where the union in \eqref{RACDFH}
    is made for all bounded and closed nonempty deterministic subsets $C$
of $H$.
 \end{trivlist}
  \end{theorem}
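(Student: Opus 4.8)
The plan is to deduce the theorem from the abstract existence result Theorem~\ref{Teorema1} by checking its three hypotheses for the class $\mathfrak{DR}$: continuity of $\varphi$, existence of a $\mathfrak{DR}$-absorbing set, and $\mathfrak{DR}$-asymptotic compactness. Continuity of $\varphi$ is already available from \cite[Theorem~6.1]{BrzGolLeG15} together with Theorem~\ref{thm:limit}, so the genuine work is (i) and (ii); statement (iii) will then be read off from the conclusion of Theorem~\ref{Teorema1} together with Remark~\ref{RACDFremark}. Throughout I use the decomposition \eqref{def:varphi}, which gives $\varphi(t,\vartheta_{-t}\omega)\x=\tvv(t)+\ovz(\omega)(0)$, where $\tvv$ solves \eqref{equ:ode} on $[0,\infty)$ with driving path $\ovz(\vartheta_{-t}\omega)$ and initial datum $\x-\ovz(\omega)(-t)$, and where I have used $\ovz(\vartheta_{-t}\omega)(t)=\ovz(\omega)(0)$ from Theorem~\ref{thm:vartheta}. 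By the same flow identity, $\tvv(t)$ equals the value at time $0$ of the solution of \eqref{equ:ode} driven by $\ovz(\omega)$, started at time $-t$ from $\x-\ovz(\omega)(-t)$; this is the device that converts every pullback estimate into a statement about a single trajectory of $\ovz(\omega)$ on $(-\infty,0]$.

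For (i) I would apply the Gronwall bound \eqref{equ:8_10} with $\tau=-t$ and terminal time $0$. The initial contribution is controlled by $\|\x-\ovz(\omega)(-t)\|^2\le 2\,r(D(\vartheta_{-t}\omega))^2+2\|\ovz(\omega)(-t)\|^2$ multiplied by $\exp\!\big(-\nu\lambda_1 t+\tfrac{27C^4}{16\nu^3}\int_{-t}^0\|\ovz(\omega)(s)\|^4_{\mathbb{L}^4}\,ds\big)$; the first summand tends to $0$ because $r=r(D)\in\frakR$ and the second by Lemma~\ref{lem_8.5}. For the forcing term I bound $\|\vg(s)\|^2_{V'}\lesssim \|\ovz(s)\|^2_H+\|\ovz(s)\|^4_{\mathbb{L}^4}$ via \eqref{equ:Bu L4}, so the integral in \eqref{equ:8_10} is dominated by a finite quantity expressible through the functions $r_3,r_4,r_5$ of Proposition~\ref{prop_class_R} and through $\|\vf\|_{V'}$, finiteness being precisely Lemma~\ref{lem_8.6}. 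Adding the contribution $\|\ovz(\omega)(0)\|^2=r_1^2(\omega)$ from the decomposition, I obtain $\limsup_{t\to\infty}\|\varphi(t,\vartheta_{-t}\omega)\x\|^2\le R(\omega)^2$ uniformly over $\x\in D(\vartheta_{-t}\omega)$, where $R^2$ is a fixed linear combination of $r_1^2,r_3^2,r_4^2,r_5^2$ and a forcing constant. The closed ball $B(\omega)=\{\x\in H:\|\x\|\le R(\omega)\}$ is then $\mathfrak{DR}$-absorbing, and since $\frakR$ is closed under sums, scalar multiples and domination, $R\in\frakR$, i.e.\ $B\in\mathfrak{DR}$.

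Part (ii) is the core, handled by the energy (weak-continuity) method. Fix $D\in\mathfrak{DR}$, $t_n\to\infty$ and $\x_n\in D(\vartheta_{-t_n}\omega)$, and write $\varphi(t_n,\vartheta_{-t_n}\omega)\x_n=\vv_n(0)+\ovz(\omega)(0)$, where $\vv_n$ solves \eqref{equ:ode} on $[-t_n,0]$ driven by $\ovz(\omega)$ with $\vv_n(-t_n)=\x_n-\ovz(\omega)(-t_n)$; it suffices to prove $\{\vv_n(0)\}$ relatively compact in $H$. The a priori energy estimates underlying \eqref{equ:8_10} bound $\vv_n$ uniformly in $L^\infty_{\loc}(H)\cap L^2_{\loc}(V)$ on each fixed window $[-T,0]$, and with the equation this bounds $\partial_t\vv_n$ in a negative-order space, so a diagonal Aubin--Lions argument yields a subsequence with $\vv_n(0)\rightharpoonup \vv_0$ weakly in $H$, $\vv_n\to\hat\vv$ strongly in $L^2_{\loc}((-\infty,0];H)$ and weakly in $L^2_{\loc}(V)$, with $\hat\vv$ a complete trajectory of \eqref{equ:ode} and $\hat\vv(0)=\vv_0$. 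I then apply the energy equality \eqref{equ:8_11} to each $\vv_n$ on $[-T,0]$: the term $\|\vv_n(-T)\|^2e^{-\nu\lambda_1 T}$ is uniformly bounded and, letting first $n\to\infty$ and then $T\to\infty$, contributes nothing; the nonlinear term $b(\vv_n,\ovz,\vv_n)$ converges to $b(\hat\vv,\ovz,\hat\vv)$ by the strong $L^2(H)$ convergence together with \eqref{b_estimate3}; the linear terms pass to the limit by weak convergence; and the dissipation $-[\vv_n(s)]^2$ is weakly lower semicontinuous. Collecting these and comparing with the energy identity for $\hat\vv$ gives $\limsup_n\|\vv_n(0)\|^2\le\|\vv_0\|^2$, which with the weak lower semicontinuity $\|\vv_0\|\le\liminf_n\|\vv_n(0)\|$ forces $\|\vv_n(0)\|\to\|\vv_0\|$, and in the Hilbert space $H$ weak convergence plus norm convergence gives $\vv_n(0)\to\vv_0$ strongly. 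I expect the \emph{main obstacle} to be exactly this limit passage: the nonlinear term must be handled through genuine strong $L^2_{\loc}(H)$ compactness of $\{\vv_n\}$ rather than mere weak convergence, and the identification $\hat\vv(0)=\vv_0$ of the weak limit of the endpoints with the endpoint of the limit trajectory must be justified carefully.

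With (i), (ii) and the known continuity of $\varphi$, Theorem~\ref{Teorema1} applies and produces the $\mathfrak{DR}$-attractor $A(\omega)=\Omega_B(\omega)$, which is $\calF^u$-measurable. To obtain (iii) in full I would invoke Remark~\ref{RACDFremark}, for which I must check that $\mathfrak{DR}$ contains every bounded closed deterministic set $C$, equivalently that a constant function $r\equiv r(C)$ lies in $\frakR$. Indeed, by the strong law \eqref{equ:Ezalpha} the average $\tfrac1t\int_{-t}^0\|\ovz(\omega)(s)\|^4_{\mathbb{L}^4}\,ds$ tends to $\E\|\ovz(0)\|^4_X$, and the choice \eqref{equ:Ezalpha1} of $\alpha$ guarantees $\tfrac{27C^4}{16\nu^3}\E\|\ovz(0)\|^4_X<\nu\lambda_1$, so the exponent $-\nu\lambda_1 t+\tfrac{27C^4}{16\nu^3}\int_{-t}^0\|\ovz(\omega)(s)\|^4_{\mathbb{L}^4}\,ds\to-\infty$ and $r(C)^2$ times its exponential tends to $0$. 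Remark~\ref{RACDFremark} then upgrades the measurability to $\hat{\calF}$ and delivers the representation \eqref{RACDFH} as the closure of the union of the deterministic $\Omega$-limit sets, exhibiting $A$ as the minimal $\mathfrak{DR}$-attractor.
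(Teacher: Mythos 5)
Your proposal is correct and follows essentially the same route as the paper: reduce to Theorem~\ref{Teorema1} together with Remark~\ref{RACDFremark}, obtain the $\mathfrak{DR}$-absorbing ball from the Gronwall estimate \eqref{equ:8_10} combined with Lemmas~\ref{lem_8.5}--\ref{lem_8.6} and the closure properties of the class $\frakR$, and prove $\mathfrak{DR}$-asymptotic compactness by the energy method based on the energy equality \eqref{equ:8_11}, weak lower semicontinuity of the dissipation, and strong local compactness for the nonlinear term. The only divergence is technical and occurs in part (ii): where you build a complete limit trajectory $\hat\vv$ on $(-\infty,0]$ by an Aubin--Lions diagonal argument and then let $T\to\infty$ in the energy balance, the paper (Proposition~\ref{prop:ACprop}) instead constructs a negative trajectory $\vy_{-k}\in K(\vartheta_{-k}\omega)$ by iterated weak limits along nested subsequences, identifies $\varphi(k,\vartheta_{-k}\omega)\vy_{-k}=\vy_0$ via the continuity result Theorem~\ref{thm:limit}, and compares with the energy identity \eqref{equ:8_31} for the solution started at $\vy_{-k}$, discarding the initial term by positivity --- a variant of the same idea rather than a genuinely different method.
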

\begin{proof}
In view of Theorem \ref{Teorema1} and Remark \ref{RACDFremark},
it is enough to show (i) and (ii). The proof of (ii) will
be done in the next proposition.

\noindent
{\em Proof of (i)}

With a fixed $\omega \in \Omega$,
let $D(\omega)$ be a random  set from the class $\mathfrak{DR}$
with radius $r_D(\omega)$, i.e. $r_D(\omega):=\sup\{|\vx|_H:x\in
D(\omega)\}$.

For given  $s \leq 0$ and $\vx \in
H$, let $\vv$ be the solution of (\ref{equ:ode}) on time
interval $[s, \infty)$ with the initial condition $\vv(s)= \vx-\ovz(s)$.
By applying (\ref{equ:8_10}) with $t=0, \tau =s \leq 0 $,  we get
     \begin{eqnarray}\label{eqn:5.13}
     \| \vv(0)\|^2 &\leq&  2\|\vx\|^2
     \exp\left(\nu\lambda_1s+\frac{27C^4}{16\nu^3}
\int_{s}^0
   \| \ovz(r)\|^4_{\mathbb{L}^4}\, \,dr\right) \\
&&    + 2\| \ovz(s)\|^2
     \exp\left(\nu\lambda_1s+\frac{27C^4}{16\nu^3}
\int_{s}^0
   \| \ovz(r)\|^4_{\mathbb{L}^4}\, \,dr\right) \nonumber\\
&& +\frac{3}{\nu}
\int_{s}^0
  \{\|
     \vg(t)\|^2_{\mathrm{V}^\prime}+\| \vf \|^2_{\mathrm{V}^\prime}
     \}
  \exp\left(\nu\lambda_1t+\frac{27C^4}{16\nu^3}\int_{t}^0\|
     \ovz(r)\|^4_{\mathbb{L}^4}\, dr\right)\,dt.
\end{eqnarray}
Set, for $\omega\in\Omega$,%
\begin{eqnarray}
      r_{11}(\omega)^2= 2 +\sup_{s\leq 0}
  \left\{2\|
       \ovz(s)\|^2
    \exp\left(\nu\lambda_1 s+
     \frac{27C^4}{16\nu^3}
   \int_{s}^0
      \|\ovz(r)\|^4_{\mathbb{L}^4}\, \,dr \right) \right.
      \nonumber\\
      +\left.\frac{3}{\nu}
 \int_{s}^0
 \{\|
      \vg(t)\|^2_{\mathrm{V}^\prime}+\| \vf \| ^2_{\mathrm{V}^\prime}
      \}
\exp\left(\nu\lambda_1 t+
     \frac{27C^4}{16\nu^3}
   \int_{t}^0\|
      \ovz(r)\|^4_{\mathbb{L}^4}\, dr\right)\,dt \right\},
       \label{eqn:r_11}
\end{eqnarray}
and
\begin{equation}
        r_{12}(\omega)=\|\ovz(0)(\omega)\|_{H} \label{eqn:r_12}.
\end{equation}

Using Lemma \ref{lem_8.6} and Proposition \ref{prop_class_R}  we conclude
that both $r_{11}$ and $r_{12}$ belong to $\mathfrak{R}$ and
that $r_{13}:=r_{11}+r_{12}$  belongs to $\mathfrak{R}$ as well.
Therefore, the random set $B$ defined by
$B(\omega):=\{\vu \in H: \| \vu\|  \leq r_{13}(\omega)\}$ belongs to the
family $\mathfrak{DR}$.

Now we will show that $B$ absorbs $D$. Let $\omega\in\Omega$ be
fixed. Since $r_D\in\mathfrak{R}$ there exists $t_D(\omega)\geq
0$, such that
\[
r_D(\vartheta_{-t}\omega)^2
\exp \left( -\nu\lambda_1 t
+\frac{27C^4}{16\nu^3}\int_{-t}^0\|\ovz(\omega)(s)\|^4_{\mathbb{L}^4}\, ds \right)\leq 1,
\mbox{ for } t\geq t_D(\omega).
\]
Thus, if $\vx\in
D(\vartheta_{-t}\omega)$ and $s\geq t_D(\omega)$, then by
\eqref{eqn:5.13},  $\| \vv(0,s;\omega,\vx-\ovz(s))\| \leq
r_{11}(\omega)$. Thus we infer that
\begin{equation*}
\|\vu(0,s;\omega,\vx)\|  \leq \| \vv(0,s;\omega,\vx-\ovz(s))\| +
\|\ovz(0)(\omega)\| \leq r_{13}(\omega).
\end{equation*}
In other words, $\vu(0,s;\omega,\vx)\in B(\omega)$, for all $s\geq
t_D(\omega)$. This proves that $B$ absorbs $D$.
\end{proof}

\begin{proposition}\label{prop:ACprop}
Assume that for each random set $D$ belonging to $\mathfrak{DR}$, there exists a random set
$B$ belonging to $\mathfrak{DR}$ such that $B$ absorbs $D$. Then the RDS $\varphi$ is
$\mathfrak{DR}$-asymptotically compact.
\end{proposition}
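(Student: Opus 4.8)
The plan is to verify the definition of $\mathfrak{DR}$-asymptotic compactness directly, by the energy (weak-compactness) method. Fix $\omega\in\Omega$, a sequence $t_n\to\infty$ and $\vx_n\in D(\vartheta_{-t_n}\omega)$. Let $\vv_n$ be the solution of \eqref{equ:ode} on $[-t_n,0]$ driven by $\ovz=\ovz(\omega)$ and $\vf$, with $\vv_n(-t_n)=\vx_n-\ovz(-t_n)$, so that by \eqref{def:uv} we have $\varphi(t_n,\vartheta_{-t_n}\omega)\vx_n=\vv_n(0)+\ovz(\omega)(0)$. Since $\ovz(\omega)(0)$ is a fixed element of $H$, it suffices to show that $\{\vv_n(0)\}$ is relatively compact in $H$, and for that it is enough to prove that an arbitrary subsequence admits a strongly convergent further subsequence. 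Let $B\in\mathfrak{DR}$ be the set absorbing $D$ furnished by the hypothesis, with radius $r_B\in\frakR$.

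First I would record uniform a priori bounds on finite windows $[-T,0]$, $T\in\nN$. From the cocycle property and \eqref{def:uv}, $\vv_n(-T)=\varphi(t_n-T,\vartheta_{-(t_n-T)}(\vartheta_{-T}\omega))\vx_n-\ovz(-T)$; applying the absorption of $D$ by $B$ at the base point $\vartheta_{-T}\omega$ gives, for each fixed $T$ and all $n$ large, $\|\vv_n(-T)\|\le r_B(\vartheta_{-T}\omega)+\|\ovz(-T)\|=:\rho(T,\omega)$. Inserting this into \eqref{equ:8_10} on $[-T,0]$ and integrating the energy inequality derived in the proof of Lemma \ref{newlemma6_5} yields bounds for $\vv_n$ in $L^\infty(-T,0;H)\cap L^2(-T,0;V)$ uniform in $n$, and the equation \eqref{equ:ode} then bounds $\partial_t\vv_n$ in $L^2(-T,0;V^\prime)$. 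By a diagonal argument over $T\in\nN$ and the Aubin--Lions lemma, I extract a subsequence (not relabelled) and a function $\tvv$ on $(-\infty,0]$ such that, on every $[-T,0]$, $\vv_n\rightharpoonup\tvv$ in $L^2(-T,0;V)$, weakly-$\ast$ in $L^\infty(-T,0;H)$ and strongly in $L^2(-T,0;H)$, hence, by \eqref{equ:L4}, strongly in $L^2(-T,0;\mathbb{L}^4(\bS))$. Passing to the limit in \eqref{def:soln} shows that $\tvv$ is a solution of \eqref{equ:ode} on $(-\infty,0]$, and (by convergence in $C([-T,0];H_{\mathrm w})$) $\vv_n(0)\rightharpoonup\eta:=\tvv(0)$ in $H$; in particular $\|\eta\|\le\liminf_n\|\vv_n(0)\|$.

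The heart of the argument is an upper bound for $\limsup_n\|\vv_n(0)\|^2$. I would write the energy identity \eqref{equ:8_11} with $\tau=-T$, $t=0$ both for $\vv_n$ and for $\tvv$ (legitimate for $\tvv$ since it is a solution, so Lemma \ref{newlemma6_5} applies), and pass to the limit term by term. Since $\vg=\alpha\ovz-\B(\ovz,\ovz)\in L^2(-T,0;V^\prime)$ by \eqref{equ:Bu L4}, the forcing integrals $\int_{-T}^0 e^{\nu\lambda_1 s}\langle\vg(s)+\vf,\vv_n(s)\rangle\,ds$ converge by the weak convergence in $L^2(-T,0;V)$. For the nonlinear term I would use that $\ovz(\omega)\in C(\R;\mathbb{L}^4(\bS))$, so $\sup_{[-T,0]}\|\ovz(s)\|_{\mathbb{L}^4}=:M_T<\infty$, and split
\[
b(\vv_n,\ovz,\vv_n)-b(\tvv,\ovz,\tvv)=b(\vv_n-\tvv,\ovz,\vv_n)+b(\tvv,\ovz,\vv_n-\tvv).
\]
By the antisymmetry \eqref{skew} and the estimate \eqref{b_estimate3}, the first piece is dominated by $CM_T\|\vv_n-\tvv\|_{L^2(\mathbb{L}^4)}\|\vv_n\|_{L^2(V)}$ and vanishes by the strong $L^2(\mathbb{L}^4)$ convergence, while the second is a fixed, $L^2(-T,0;V)$-bounded linear functional of $\vv_n-\tvv$ and vanishes by the weak convergence. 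Finally the dissipation term $-2\int_{-T}^0 e^{\nu\lambda_1 s}[\vv_n(s)]^2\,ds$ is handled by the weak lower semicontinuity of the weighted norm $\vw\mapsto(\int_{-T}^0 e^{\nu\lambda_1 s}[\vw(s)]^2\,ds)^{1/2}$ on $L^2(-T,0;V)$. Combining with the energy identity for $\tvv$ then yields
\[
\limsup_n\|\vv_n(0)\|^2\le\rho(T,\omega)^2 e^{-\nu\lambda_1 T}+\|\eta\|^2-\|\tvv(-T)\|^2 e^{-\nu\lambda_1 T}\le\rho(T,\omega)^2 e^{-\nu\lambda_1 T}+\|\eta\|^2.
\]

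To conclude I would let $T\to\infty$. Exactly as in the absorption argument of part (i), since $r_B\in\frakR$ and, by Lemma \ref{lem_8.5}, $\|\ovz(-T)\|^2 e^{-\nu\lambda_1 T}\to0$, one gets $\rho(T,\omega)^2 e^{-\nu\lambda_1 T}\to0$, whence $\limsup_n\|\vv_n(0)\|^2\le\|\eta\|^2$. Together with $\|\eta\|\le\liminf_n\|\vv_n(0)\|$ this gives $\|\vv_n(0)\|\to\|\eta\|$, and since $\vv_n(0)\rightharpoonup\eta$ weakly in the Hilbert space $H$, the convergence is in fact strong. Hence $\varphi(t_n,\vartheta_{-t_n}\omega)\vx_n\to\eta+\ovz(\omega)(0)$ strongly, so $\{\varphi(t_n,\vartheta_{-t_n}\omega)\vx_n\}$ is relatively compact and $\varphi$ is $\mathfrak{DR}$-asymptotically compact. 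The step I expect to be most delicate is the passage to the limit in the nonlinear term: only weak convergence in $L^2(-T,0;V)$ is available, so one must exploit both the continuity of $\ovz$ into $\mathbb{L}^4(\bS)$ on bounded intervals and the cancellation \eqref{skew}, placing each factor $\vv_n-\tvv$ either in a slot controlled by strong $L^2(\mathbb{L}^4)$ convergence or against a fixed bounded functional on $L^2(-T,0;V)$.
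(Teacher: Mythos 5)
Your proposal is correct and rests on the same underlying mechanism as the paper's proof: the energy equality \eqref{equ:8_11}, weak lower semicontinuity of the weighted dissipation term, and the $\liminf$/$\limsup$ sandwich that upgrades weak to strong convergence in the Hilbert space $H$. Where you differ is in the construction of the limiting object and in the control of the tail term. The paper builds a backward trajectory $(\vy_{-k})_{k\in\nN}$ of weak $H$-limits at integer times by iterated subsequence extraction inside the absorbing sets $K(\vartheta_{-k}\omega)$, identifies $\vy_0=\varphi(k,\vartheta_{-k}\omega)\vy_{-k}$ via the continuous-dependence Theorem \ref{thm:limit}, and then works with the solutions $\vv_k$ emanating from $\vy_{-k}-\ovz(-k)$ together with the diagonal sequence $m_j=j^{(j)}$; you instead extract a single limiting trajectory $\tvv$ on $(-\infty,0]$ by Aubin--Lions compactness on expanding windows and identify it as a solution directly from the weak formulation. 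Your route buys a cleaner identification step (you never have to feed weakly convergent initial data into the continuity theorem) at the price of re-deriving the a priori and compactness estimates that the paper imports from its existence proof. Likewise, for the ``initial'' term at time $-T$ the paper runs the decay estimate \eqref{equ:8_10} all the way from $-t_{n^{(k)}}$ to $-k$ and produces the integrable majorant $h$ via Lemmas \ref{lem:8_4}--\ref{lem:8_7}, whereas you invoke the absorbing set at the shifted base point $\vartheta_{-T}\omega$ and use the defining property of the class $\frakR$ together with Lemma \ref{lem_8.5} to send $\rho(T,\omega)^2e^{-\nu\lambda_1 T}\to 0$; this is shorter and exploits the hypothesis of the proposition more directly. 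Two small points deserve explicit mention in a write-up: the antisymmetry \eqref{skew} with $\ovz$ in the second or third slot is only available through the extension of $b$ provided by Lemma \ref{lem:bL4} and density (this is also how the paper itself manipulates $b(\vv,\ovz,\vv)$), and the convergence $\vv_n(0)\rightharpoonup\tvv(0)$ at the endpoint $t=0$ requires the standard $C([-T,0];H_{\mathrm w})$ argument (uniform $H$-bound plus equicontinuity into $V^\prime$), which you should spell out rather than assert.
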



The proof of the proposition is adapted from \cite{Brzetal10},
in which a RDS generated by NSEs on some 2-dimensional unbounded domain
was considered. The proposition generalises the asymptotically compactness of the RDS in \cite[Proposition 8.1]{BrzLi06} to the $\mathfrak{DR}$- asymptotically compactness of the RDS.

\noindent
\begin{proof}

Suppose that $B$ is a closed random set from the class $\mathfrak{DR}$ and $K \in \mathfrak{DR}$ is a
close random set which absorbs $B$. We fix $\omega \in \Omega$. Let us take an increasing sequence of positive
numbers $(t_n)_{n=1}^\infty$ such that $t_n \rightarrow \infty$ and an $H$-valued sequence $(\x_n)_n$
such that $\x_n \in B(\vartheta_{-t_n}\omega)$, for all $n \in \nN$.

{\bf Step I.} Reduction. Since $K(\omega)$ absorbs $B$, for $n\in \nN$ sufficiently large,\linebreak
$\varphi(t_n,\vartheta_{-t_n} \omega) B \subset K(\omega)$. Since $K(\omega)$ is closed and bounded,
and hence weakly compact, without loss of generality we may assume that
$\varphi(t_n,\vartheta_{-t_n}\omega) B \subset K(\omega)$ for all $n \in \nN$ and, for some $\vy_0 \in K(\omega)$,
\be\label{def:y0}
  \varphi(t_n,\vartheta_{-t_n} \omega) \x_n \rightarrow \vy_0 \quad \mbox{ weakly in } H.
\ee
Since $\ovz(0) \in H$, we also have
\[
  \varphi(t_n,\vartheta_{-t_n}\omega)\x_n - \ovz(0) \rightarrow \vy_0 - \ovz(0) \quad \mbox{ weakly in } H.
\]
In particular,
\be\label{equ:liminf}
 \|\vy_0 - \ovz(0)\| \le \liminf_{n\rightarrow \infty} \| \varphi(t_n,\vartheta_{-t_n}\omega) \x_n - \ovz(0)\|.
\ee

We claim that it is enough to prove that for some subsequence $\{ n' \} \subset \nN$
\be\label{equ:limsup}
\|\vy_0 - \ovz(0)\| \ge \limsup_{n'\rightarrow \infty} \| \varphi(t_{n'},\vartheta_{-t_{n'}}\omega) \x_{n'} - \ovz(0)\|.
\ee
Indeed, since $H$ is a Hilbert space, \eqref{equ:liminf} in conjunction with \eqref{equ:limsup} imply that
\[
 \varphi(t_n,\vartheta_{-t_n}\omega)\x_n - \ovz(0) \rightarrow \vy_0 - \ovz(0) \quad \mbox{ strongly in } H
\]
which implies that
\[
\varphi(t_n,\vartheta_{-t_n} \omega) \x_n \rightarrow \vy_0 \quad \mbox{ strongly in } H.
\]
Therefore, in order to show that $\{\varphi(t_n,\vartheta_{-t_n}\omega)\x_n\}_n$ is relatively compact in $H$ we need
to prove that \eqref{equ:limsup} holds true.

{\bf Step II.} Construction of a negative trajectory, i.e. a sequence $(\vy_n)_{n=-\infty}^0$ such that
$\vy_n \in K(\vartheta_n \omega)$, $n \in \zZ^{-}$, and $\vy_k = \varphi(k-n,\vartheta_n \omega)\vy_n$, $n<k \le 0$.

Since $K(\vartheta_{-1}\omega)$ absorbs $B$, there exists a constant $N_1(\omega) \in \nN$, such that
\[
 \{ \varphi(-1+t_n,\vartheta_{1-t_n} \vartheta_{-1} \omega)\x_n : n \ge N_1(\omega) \} \subset K(\vartheta_{-1}\omega).
\]
Hence we can find a subsequence $\{n'\} \subset \nN$ and $\vy_{-1} \in K(\vartheta_{-1}\omega)$ such that
\be\label{def:ym1}
 \varphi(-1+t_{n'}, \vartheta_{-t_{n'}}\omega)\x_{n'} \rightarrow \vy_{-1} \mbox{ weakly in } H.
\ee
We observe that the cocycle property, with $t=1$, $s=t_{n'}-1$, and $\omega$ being replaced by $\vartheta_{-t_{n'}}\omega$,
reads as follows:
\[
\varphi(t_{n'},\vartheta_{-t_{n'}}\omega) = \varphi(1,\vartheta_{-1}\omega) \varphi(-1+t_{n'},\vartheta_{t_{n'}}\omega).
\]
Hence, by the last part of Theorem \ref{thm:limit}, from \eqref{def:y0} and \eqref{def:ym1} we infer that
$\varphi(1,\vartheta_{-1}\omega) \vy_{-1} = \vy_0$. By induction, for each $k=1,2,\ldots,$ we can construct a subsequence
$\{n^{(k)} \} \subset \{ n^{(k-1)} \}$ and $\vy_{-k} \in K(\vartheta_{-k}\omega)$, such that
$\varphi(1,\vartheta_{-k}\omega)\vy_{-k} = \vy_{-k+1}$ and
\be\label{def:ymk}
\varphi(-k+ t_{n^{(k)}}, \vartheta_{-t_{n^{(k)}}}\omega)\x_{n^{(k)}} \rightarrow \vy_{-k} \mbox{ weakly in } H,
 \mbox{ as } n^{(k)} \rightarrow \infty.
\ee

As above, the cocycle property with $t=k$, $s=t_{n^{(k)}}$ and $\omega$ being replaced by $\vartheta_{-t_{n^{(k)}}} \omega$
yields
\be\label{equ:varphi2}
\varphi( t_{n^{(k)}}, \vartheta_{- t_{n^{(k)}}  } \omega)
= \varphi(k,\vartheta_{-k}\omega) \varphi( t_{n^{(k)}} - k, \vartheta_{ - t_{n^{(k)}}  } \omega), \quad k \in \nN.
\ee
Hence, from \eqref{def:ymk} and by applying the last part of Theorem \ref{thm:limit}, we get
\be\label{equ:weaklim}
\begin{aligned}
 \vy_0 &= \mbox{w} - \lim_{ n^{(k)} \rightarrow \infty}
 \varphi( t_{n^{(k)}}, \vartheta_{ - t_{n^{(k)}}  } \omega) \x_{ n^{(k)}} \\
       &= \mbox{w} - \lim_{ n^{(k)} \rightarrow \infty}
       \varphi(k,\vartheta_{-k}\omega) \varphi( t_{n^{(k)}} - k, \vartheta_{ - t_{n^{(k)}}  } \omega) \x_{ n^{(k)}} \\
       &= \varphi(k,\vartheta_{-k}\omega)
         (  \mbox{w} - \lim_{ n^{(k)} \rightarrow \infty}
           \varphi( t_{n^{(k)}} - k, \vartheta_{ - t_{n^{(k)}}  } \omega) \x_{ n^{(k)}} ) \\
       &= \varphi(k,\vartheta_{-k}\omega) \vy_{-k},\\
\end{aligned}
\ee
where w-$\lim$ denotes the limit in the weak topology on $H$. The same proof yields a more general property:
\[
\varphi(j, \vartheta_{-k} \omega) \vy_{-k} = \vy_{-k+j} \mbox {  if } 0 \le j \le k.
\]

Before continuing with the proof, let us point out that \eqref{equ:weaklim} means precisely that
$\vy_0 = \vu(0,-k;\omega,\vy_{-k})$, where $\vu$ is defined in \eqref{def:uv}.

{\bf Step III.}
Proof of \eqref{equ:limsup}. From now on, unless explicitly stated, we fix $k \in \nN$, and we will consider
problem \eqref{sNSEs} on the time interval $[-k,0]$. From \eqref{def:uv} and \eqref{equ:varphi2}, with $t=0$
and $s=-k$, we have
\be\label{equ:8_15}
\begin{aligned}
&\|\varphi( t_{n^{(k)}}, \vartheta_{- t_{n^{(k)}}  } \omega)\x_{ n^{(k)} } - \ovz(0)\|^2  \\
& \qquad = \| \varphi(k,\vartheta_{-k}\omega) \varphi( t_{n^{(k)}} - k, \vartheta_{ - t_{n^{(k)}}  }  \omega)\x_{ n^{(k)} }
       - \ovz(0)\|^2 \\
& \qquad = \| \vv(0,-k;\omega, \varphi( t_{n^{(k)}} - k, \vartheta_{ - t_{n^{(k)}}  }  \omega)\x_{ n^{(k)}} - \ovz(-k))\|^2.
\end{aligned}
\ee

Let $\vv$ be the solution to \eqref{equ:ode} on $[-k,\infty)$ with $\ovz=\ovz_\alpha(\cdot,\omega)$
and the initial condition at time $-k$:
$\vv(-k) = \varphi( t_{n^{(k)}} - k, \vartheta_{ - t_{n^{(k)}} } \omega)\x_{n^{(k)}} - \ovz(-k)$.
In other words,
\[
 \vv(s) =
 \vv\big(s,-k;\omega, \varphi( t_{n^{(k)}} - k, \vartheta_{ - t_{n^{(k)}} } \omega)\x_{n^{(k)}} - \ovz(-k) \big),
 \quad s \ge -k.
\]
From \eqref{equ:8_15} and \eqref{equ:8_11} with $t=0$ and $\tau=-k$ we infer that
\be\label{equ:8_16}
\begin{aligned}
&\|\varphi(t_{n^{(k)}},\vartheta_{ - t_{n^{(k)}} } \omega)\x_{n^{(k)}} - \ovz(0)\|^2 =
e^{-\nu \lambda_1 k}\|\varphi( t_{n^{(k)}} - k, \vartheta_{ - t_{n^{(k)}} } \omega)\x_{n^{(k)}} - \ovz(-k)\|^2 \\
&+ 2 \int_{-k}^0 e^{\nu \lambda_1 s} (b(\vv(s),\ovz(s),\vv(s)) + \langle \vg(s),\vv(s) \rangle
+ \langle \vf, \vv(s) \rangle - [\vv(s)]^2) ds.
\end{aligned}
\ee

It is enough to find a nonnegative function $h \in L^1(-\infty,0)$ such that
\be\label{equ:8_17}
\limsup_{ n^{(k)} \rightarrow \infty } \|\varphi(t_{n^{(k)}},\vartheta_{ - t_{n^{(k)}} } \omega)\x_{n^{(k)}} - \ovz(0)\|^2 \le \int_{-\infty}^{-k} h(s) ds + \|\vy_0 - \ovz(0)\|^2.
\ee
For, if we define the diagonal process $(m_j)_{j=1}^\infty$ by $m_j = j^{(j)}$, $j\in \nN$, then for each
$k\in \nN$, the sequence $(m_j)_{j=k}^\infty$ is a subsequence of the sequence $(n^{(k)})$ and hence by
\eqref{equ:8_17},
$
 \limsup_j\|\varphi(t_{m_j},\vartheta_{ - t_{m_j} } \omega)\x_{m_j} - \ovz(0)\|^2
 \le \int_{-\infty}^{-k} h(s) ds + \|\vy_0 - \ovz(0)\|^2.
$ Taking the $k\rightarrow \infty$ limit in the last inequality we infer that
\[
 \limsup_j\|\varphi(t_{m_j},\vartheta_{ - t_{m_j} } \omega)\x_{m_j} - \ovz(0)\|^2
  \le \|\vy_0 - \ovz(0)\|^2,
\]
which proves claim \eqref{equ:limsup}.

{\bf Step IV.} Proof of \eqref{equ:8_17}. We begin with estimating the first term on the RHS
of \eqref{equ:8_16}. If $-t_{n^{(k)}} < -k$, then by \eqref{def:uv} and \eqref{equ:8_10} we infer that
\be\label{equ:8_18}
\begin{aligned}
&\|\varphi(t_{n^{(k)}} - k, \vartheta_{-t_{n^{(k)}} } \omega) \x_{n^{(k)}} - \ovz(-k)\|^2 \\
&\qquad= \|\vv(-k,-t_{n^{(k)}};\vartheta_{-k}\omega, \x_{n^{(k)}} - \ovz(-t_{n^{(k)}})\|^2 e^{-\nu \lambda_1 k}  \\
&\le e^{-\nu \lambda_1 k}
    \big\{ \| \x_{n^{(k)}} - \ovz(-t_{n^{(k)}}) \|^2
             \exp\left( -\nu\lambda_1(t_{n^{(k)}} -k)+ \frac{27C^4}{16\nu^3}
        \int_{-t_{n^{(k)}} }^{-k} \|\ovz(s)\|^4_{\mathbb{L}^4} ds\right) \\
& \qquad + \frac{3}{\nu}
   \int_{-t_{n^{(k)}} }^{-k} [\|\vg(s)\|^2_{V^\prime} + \|\vf\|^2_{V^\prime} ]
       \exp\left(
     -\nu \lambda_1(-k-s) + \frac{27C^4}{16\nu^3} \int_s^{-k}
          \|\ovz(\zeta)\|^4_{\mathbb{L}^4}d\zeta \right)   \big\}  \\
&\le 2I_{ n^{(k)}} + 2 II_{n^{(k)}}+ \frac{3}{\nu} III_{n^{(k)}}+ \frac{3}{\nu} IV_{n^{(k)}},
\end{aligned}
\ee
where
\begin{align*}
I_{ n^{(k)}}  &=\|\x_{n^{(k)}}\|^2
  \exp\left( -\nu \lambda_1 t_{n^{(k)}} +
             \frac{27C^4}{16\nu^3} \int_{-t_{n^{(k)}} }^{-k} \|\ovz(s)\|^4_{\mathbb{L}^4} ds  \right)  \\
II_{ n^{(k)}} &= \|\ovz( t_{n^{(k)}} )\|^2
       \exp\left( -\nu \lambda_1 t_{n^{(k)}} +
             \frac{27C^4}{16\nu^3} \int_{-t_{n^{(k)}} }^{-k} \|\ovz(s)\|^4_{\mathbb{L}^4} ds  \right)  \\
III_{ n^{(k)}}&= \int_{-t_{n^{(k)}} }^{-k} \|\vg(s)\|^2_{V^\prime}
   \exp\left( -\nu\lambda_1 s + \frac{27C^4}{16\nu^3}
                      \int_s^{-k}\|\ovz(\zeta)\|^4_{\mathbb{L}^4}d\zeta  \right) \\
IV_{ n^{(k)}} &=\int_{-t_{n^{(k)}} }^{-k} \|\vf(s)\|^2_{V^\prime}
          \exp\left( -\nu\lambda_1 s + \frac{27C^4}{16\nu^3}
                      \int_s^{-k}\|\ovz(\zeta)\|^4_{\mathbb{L}^4}d\zeta  \right)
\end{align*}

First we will find a nonnegative function $h \in L^1(-\infty,0)$ such that
\be\label{equ:8_19}
\limsup_{ n^{(k)} \rightarrow \infty} \| \varphi( t_{n^{(k)}} - k,\vartheta_{-t_{n^{(k)}}}\omega)\x_{n^{(k)}} - \ovz(-k)\|^2
  e^{-\nu \lambda_1 k} \le \int_{-\infty}^{-k} h(s) ds, \quad  k \in \nN.
\ee
This will be accomplished as soon as we prove the following four lemmas.

\begin{lemma}\label{lem:8_4}
$\limsup_{ n^{(k)} \rightarrow \infty } I_{n^{(k)}} = 0.$
\end{lemma}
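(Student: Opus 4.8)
The plan is to control $I_{n^{(k)}}$ by the radius function of $B$ together with the defining property of the class $\mathfrak{R}$, after absorbing the contribution of the integral over $[-k,0]$ into a harmless bounded factor. First I would use that $\x_{n^{(k)}} \in B(\vartheta_{-t_{n^{(k)}}}\omega)$ and $B \in \mathfrak{DR}$ to write $\|\x_{n^{(k)}}\|^2 \le r_B(\vartheta_{-t_{n^{(k)}}}\omega)^2$, where, by Definition~\ref{def classR}, the radius $r_B$ belongs to $\mathfrak{R}$.

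Next I would split the integral in the exponent of $I_{n^{(k)}}$ as
\[
  \int_{-t_{n^{(k)}}}^{-k} \|\ovz(s)\|^4_{\mathbb{L}^4}\,ds
  = \int_{-t_{n^{(k)}}}^{0} \|\ovz(s)\|^4_{\mathbb{L}^4}\,ds
  - \int_{-k}^{0} \|\ovz(s)\|^4_{\mathbb{L}^4}\,ds .
\]
Since $\ovz \in L^4_{\loc}$ and $k$ is fixed, the quantity $\int_{-k}^{0} \|\ovz(s)\|^4_{\mathbb{L}^4}\,ds$ is a nonnegative finite constant, so the corresponding exponential factor satisfies $\exp\!\big(-\tfrac{27C^4}{16\nu^3}\int_{-k}^{0} \|\ovz(s)\|^4_{\mathbb{L}^4}\,ds\big) \le 1$. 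Combining this with the bound on $\|\x_{n^{(k)}}\|^2$ yields
\[
  0 \le I_{n^{(k)}}
  \le r_B(\vartheta_{-t_{n^{(k)}}}\omega)^2
     \exp\!\left(-\nu\lambda_1 t_{n^{(k)}}
       + \frac{27C^4}{16\nu^3}\int_{-t_{n^{(k)}}}^{0} \|\ovz(s)\|^4_{\mathbb{L}^4}\,ds\right).
\]

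Finally, because $(t_n)$ increases to $+\infty$ and $(n^{(k)})$ is a subsequence, $t_{n^{(k)}} \to \infty$ as $n^{(k)} \to \infty$; the right-hand side above is exactly the expression whose limit superior, by $r_B \in \mathfrak{R}$ (in the working form established in Proposition~\ref{prop_class_R}), vanishes. Hence $\limsup_{n^{(k)}\to\infty} I_{n^{(k)}} = 0$. There is no genuine obstacle here: the only point requiring care is the bookkeeping that replaces the upper limit $-k$ of the integral by the upper limit $0$ demanded by membership in $\mathfrak{R}$, which for each fixed $k$ costs only the bounded factor above and therefore does not affect the vanishing of the limit.
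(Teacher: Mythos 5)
Your proof is correct, but it follows a genuinely different route from the paper's. You bound $\|\x_{n^{(k)}}\|^2$ by $r_B(\vartheta_{-t_{n^{(k)}}}\omega)^2$ and, after discarding the harmless factor $\exp\bigl(-\tfrac{27C^4}{16\nu^3}\int_{-k}^0\|\ovz(s)\|^4_{\mathbb{L}^4}\,ds\bigr)\le 1$, you land exactly on the quantity that the membership $r_B\in\mathfrak{R}$ (in the working form used in Proposition~\ref{prop_class_R} and in the absorption argument) forces to vanish as $t_{n^{(k)}}\to\infty$. The paper instead never touches the class $\mathfrak{R}$ at this point: it invokes the strong law of large numbers built into $\hat\Omega(\xi,E)$, i.e.\ \eqref{equ:Ezalpha}, together with the smallness condition \eqref{equ:Ezalpha1} on $\E\|\ovz_\alpha(0)\|^4_X$, to show that the exponent in $I_{n^{(k)}}$ is eventually dominated by $-\tfrac{\nu\lambda_1}{2}(t_{n^{(k)}}-k)$ (this is \eqref{equ:8_20}), and then combines this with a uniform bound $\|\x_{n^{(k)}}\|\le\rho_1$. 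Each approach buys something: yours is self-contained and, notably, remains valid when the fibers $B(\vartheta_{-t_n}\omega)$ are not uniformly bounded in $n$ --- which is precisely the situation the class $\mathfrak{DR}$ is designed to allow, and where the paper's appeal to a single constant $\rho_1$ is only immediate for deterministic bounded $B$; on the other hand, the paper's ergodic estimate \eqref{equ:8_20} is a quantitative statement that is reused in the proofs of Lemmas~\ref{lem:8_5}, \ref{lem:8_6} and \ref{lem:8_7}, so it cannot be dispensed with globally even if your shorter argument suffices for Lemma~\ref{lem:8_4} itself.
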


\begin{lemma}\label{lem:8_5}
$\limsup_{ n^{(k)} \rightarrow \infty } II_{n^{(k)}} = 0.$
\end{lemma}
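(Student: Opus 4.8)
The plan is to obtain Lemma~\ref{lem:8_5} as an essentially immediate consequence of Lemma~\ref{lem_8.5}, after a small adjustment of the integration limit. First I would trace the origin of the term $II_{n^{(k)}}$: in \eqref{equ:8_18} the solution $\vv$ is started at time $-t_{n^{(k)}}$ from the datum $\x_{n^{(k)}}-\ovz(-t_{n^{(k)}})$, and $II_{n^{(k)}}$ is precisely the contribution of the normal component $\ovz(-t_{n^{(k)}})$ after splitting $\|\x_{n^{(k)}}-\ovz(-t_{n^{(k)}})\|^2\le 2\|\x_{n^{(k)}}\|^2+2\|\ovz(-t_{n^{(k)}})\|^2$. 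Thus the factor displayed as $\|\ovz(t_{n^{(k)}})\|^2$ is to be read as $\|\ovz(-t_{n^{(k)}})\|^2$, and the quantity to be controlled is
\[
II_{n^{(k)}} = \|\ovz(-t_{n^{(k)}})\|^2 \exp\left(-\nu\lambda_1 t_{n^{(k)}} + \frac{27C^4}{16\nu^3}\int_{-t_{n^{(k)}}}^{-k}\|\ovz(s)\|^4_{\mathbb{L}^4}\,ds\right).
\]
This is exactly the expression appearing in Lemma~\ref{lem_8.5} evaluated at $t=-t_{n^{(k)}}$, except that the inner integral runs over $[-t_{n^{(k)}},-k]$ rather than over $[-t_{n^{(k)}},0]$.

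The key step is to remove this discrepancy by monotonicity. Since $k\ge 0$ we have $-k\le 0$, so $[-t_{n^{(k)}},-k]\subseteq[-t_{n^{(k)}},0]$, and as the integrand $\|\ovz(s)\|^4_{\mathbb{L}^4}$ is nonnegative, enlarging the interval only increases the exponent. Hence
\[
II_{n^{(k)}} \le \|\ovz(-t_{n^{(k)}})\|^2 \exp\left(-\nu\lambda_1 t_{n^{(k)}} + \frac{27C^4}{16\nu^3}\int_{-t_{n^{(k)}}}^{0}\|\ovz(s)\|^4_{\mathbb{L}^4}\,ds\right).
\]
Because $t_{n^{(k)}}\to\infty$, we have $-t_{n^{(k)}}\to-\infty$, and the right-hand side is precisely the quantity whose limit is asserted to vanish in Lemma~\ref{lem_8.5}. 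Therefore the right-hand side tends to $0$ as $n^{(k)}\to\infty$.

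Finally, since $II_{n^{(k)}}\ge 0$ and is dominated by a null sequence, I would conclude $\lim_{n^{(k)}\to\infty} II_{n^{(k)}}=0$, which in particular gives $\limsup_{n^{(k)}\to\infty} II_{n^{(k)}}=0$, as claimed. I do not expect any genuine obstacle in this argument: all the analytic content, namely the competition between the exponential decay $e^{-\nu\lambda_1 t}$ and the growth of the integral of $\|\ovz\|^4_{\mathbb{L}^4}$, is already packaged into Lemma~\ref{lem_8.5}. The only point requiring care is the bookkeeping of the upper integration limit $-k$ versus $0$, which is resolved by the nonnegativity of the integrand.
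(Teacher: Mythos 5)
Your argument is correct, but it takes a different route from the paper's. You reduce Lemma~\ref{lem:8_5} to Lemma~\ref{lem_8.5}: after fixing the sign typo ($\|\ovz(t_{n^{(k)}})\|$ should indeed be $\|\ovz(-t_{n^{(k)}})\|$, as the paper's own display \eqref{equ:8_23} confirms), you enlarge the integration range from $[-t_{n^{(k)}},-k]$ to $[-t_{n^{(k)}},0]$ using the nonnegativity of $\|\ovz(s)\|^4_{\mathbb{L}^4}$, and recognize the resulting majorant as the expression of Lemma~\ref{lem_8.5} at $t=-t_{n^{(k)}}\to-\infty$. That is a clean and valid domination argument. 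The paper instead proves the statement directly: it invokes the growth bound \eqref{equ:zhatgrow} to get $\|\ovz(s)\|\le\rho_2|s|$ for $s\le s_0$ (estimate \eqref{equ:8_22}), and combines this with the strong-law-of-large-numbers consequence \eqref{equ:8_20}, which forces the whole exponent to be at most $-\tfrac{\nu\lambda_1}{2}(t_{n^{(k)}}-k)$ for large $n^{(k)}$; a quadratically growing prefactor times this decaying exponential then vanishes. The trade-off is that your proof is shorter and more modular, but it leans entirely on Lemma~\ref{lem_8.5}, which the paper states without proof; the paper's direct argument is self-contained given \eqref{equ:zhatgrow} and \eqref{equ:8_20}, and in fact contains precisely the ingredients one would need to prove Lemma~\ref{lem_8.5} itself. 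Both proofs ultimately rest on the same two facts (subexponential growth of $\ovz$ and the ergodic smallness of $\E\|\ovz_\alpha(0)\|^4_X$), so there is no gap in your reasoning, only a different packaging of it.
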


\begin{lemma}\label{lem:8_6}
$\int_{-\infty}^0 \|\vg(s)\|^2_{V^\prime}
  \exp\left( -\nu\lambda_1 s + \frac{27C^4}{16\nu^3}
                      \int_s^{0}\|\ovz(\zeta)\|^4_{\mathbb{L}^4}d\zeta  \right) < \infty.$
\end{lemma}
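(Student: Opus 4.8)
The plan is to deduce this bound from Lemma~\ref{lem_8.6} by controlling $\|\vg(s)\|_{V^\prime}^2$ pointwise in terms of the $\mathbb{L}^4(\bS)$-norm of $\ovz(s)$, so that the integrand here is dominated by the one already treated there. Recall that $\vg(s)=\alpha\ovz(s)-\B(\ovz(s),\ovz(s))$ and that, by construction, $\ovz(s)\in X=\mathbb{L}^4(\bS)\cap H$ for every $s\le 0$, so the quantities below are finite.

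First I would estimate the two parts of $\vg(s)$ separately. For the quadratic term, \eqref{equ:Bu L4} gives at once $\|\B(\ovz(s),\ovz(s))\|_{V^\prime}\le C\|\ovz(s)\|_{\mathbb{L}^4}^2$. For the linear term I would use that $H$ embeds continuously into $V^\prime$: for $h\in H$ the Poincar\'e inequality \eqref{equ:Poincare} yields $\|h\|_{V^\prime}=\sup_{\|\vv\|_V\le 1}|(h,\vv)|\le\lambda_1^{-1/2}\|h\|$, and since $\bS$ has finite surface measure, H\"older's inequality gives $\|\ovz(s)\|\le(4\pi)^{1/4}\|\ovz(s)\|_{\mathbb{L}^4}$; hence $\|\alpha\ovz(s)\|_{V^\prime}\le C\|\ovz(s)\|_{\mathbb{L}^4}$. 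Combining the two and using $(a+b)^2\le 2a^2+2b^2$, I obtain a constant $C_g>0$ depending only on $\alpha,\nu,\lambda_1$ with
\[
\|\vg(s)\|_{V^\prime}^2\le C_g\big(\|\ovz(s)\|_{\mathbb{L}^4}^2+\|\ovz(s)\|_{\mathbb{L}^4}^4\big),\qquad s\le 0.
\]

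With this pointwise bound, the weighted integral in the statement is dominated by $C_g$ times
\[
\int_{-\infty}^0\big(\|\ovz(s)\|_{\mathbb{L}^4}^2+\|\ovz(s)\|_{\mathbb{L}^4}^4\big)\exp\left(\nu\lambda_1 s+\frac{27C^4}{16\nu^3}\int_s^0\|\ovz(\zeta)\|_{\mathbb{L}^4}^4\,d\zeta\right)ds,
\]
which is finite by Lemma~\ref{lem_8.6}, since the integrand there additionally contains the summand $1$ and carries exactly the same exponential weight. I do not anticipate a genuine difficulty: the statement is essentially a corollary of Lemma~\ref{lem_8.6}. The only step needing care is that the linear contribution $\alpha\ovz(s)$ be absorbed into the $\mathbb{L}^4$-norms, so that the resulting integrand matches the weight in Lemma~\ref{lem_8.6}; this is precisely where the finite-measure embedding $\mathbb{L}^4(\bS)\hookrightarrow H$ on the sphere enters, and it is the detail most easily missed.
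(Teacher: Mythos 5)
Your proof is correct, but it takes a somewhat different route from the paper's. Both arguments begin with the same decomposition of $\vg(s)$ into its linear and quadratic parts and the bound $\|\B(\ovz(s))\|_{V^\prime}\le C\|\ovz(s)\|^2_{\mathbb{L}^4}$ from \eqref{equ:Bu L4}. From there the paper keeps the linear term measured in $V^\prime$, i.e.\ reduces to the finiteness of the two weighted integrals of $\|\ovz(s)\|^2_{V^\prime}$ and $\|\ovz(s)\|^4_{\mathbb{L}^4}$, and then estimates these directly: the strong-law bound \eqref{equ:8_20} makes the exponent eventually $\le \nu\lambda_1 s/2$, and the at-most-linear growth $\|\ovz(s)\|\le\rho_2(1+|s|)$ from \eqref{equ:8_22} controls the prefactor, leaving $\int_{-\infty}^{0}|s|^4e^{\nu\lambda_1 s/2}\,ds<\infty$. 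You instead convert the linear term entirely into $\mathbb{L}^4$-norms via $\|\cdot\|_{V^\prime}\le\lambda_1^{-1/2}\|\cdot\|_H\le C\|\cdot\|_{\mathbb{L}^4}$ (Poincar\'e plus the finite measure of $\bS$) and then invoke Lemma~\ref{lem_8.6}, whose integrand $1+\|\ovz\|^2_{\mathbb{L}^4}+\|\ovz\|^4_{\mathbb{L}^4}$ carries exactly the same exponential weight and dominates yours. This is a legitimate and shorter reduction; what it buys is brevity, and what it costs is that your argument is only as complete as Lemma~\ref{lem_8.6}, which the paper states without proof --- the paper's own argument for Lemma~\ref{lem:8_6} is in effect a self-contained proof of that estimate, so the two proofs contain essentially the same analytic content in different packaging. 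One detail worth making explicit either way: as printed, the statement (and the definition of $III_{n^{(k)}}$) carries $-\nu\lambda_1 s$ in the exponent, which for $s\le 0$ is a growing weight and would make the claim false; both you and the paper's own proof tacitly read it as $+\nu\lambda_1 s$, which is the sign that actually comes out of \eqref{equ:8_18}.
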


\begin{lemma}\label{lem:8_7}
$\int_{-\infty}^0  \exp\left( -\nu\lambda_1 s + \frac{27C^4}{16\nu^3}
                      \int_s^{0}\|\ovz(\zeta)\|^4_{\mathbb{L}^4}d\zeta  \right) < \infty.$
\end{lemma}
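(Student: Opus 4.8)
The plan is to reduce the statement to Lemma~\ref{lem_8.6}, whose integrand pointwise dominates the present one, the finiteness in both cases being driven by the strong law of large numbers \eqref{equ:Ezalpha} together with the smallness bound \eqref{equ:Ezalpha1}. First I would note that, since the exponential weight here is the same as in Lemma~\ref{lem_8.6} and $1\le 1+\|\ovz(\omega)(s)\|^2_{\mathbb{L}^4}+\|\ovz(\omega)(s)\|^4_{\mathbb{L}^4}$ for every $s\le 0$, the integrand of Lemma~\ref{lem:8_7} never exceeds the integrand of Lemma~\ref{lem_8.6}. Thus the assertion is an immediate corollary of Lemma~\ref{lem_8.6}, and strictly speaking no further work is required.

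For completeness, and because the same mechanism underlies Lemmas~\ref{lem_8.5}--\ref{lem:8_7}, I would record the direct estimate. Writing $\beta:=\frac{27C^4}{16\nu^3}$ and $\psi(s):=\nu\lambda_1 s+\beta\int_s^0\|\ovz(\omega)(\zeta)\|^4_{\mathbb{L}^4}\,d\zeta$, the quantity to bound is $\int_{-\infty}^0 e^{\psi(s)}\,ds$. The key numerical input is that $\|\cdot\|_{\mathbb{L}^4}\le\|\cdot\|_X$ together with \eqref{equ:Ezalpha1} gives $\beta\,\E\|\ovz(0)\|^4_{\mathbb{L}^4}\le\beta\,\E\|\ovz(0)\|^4_X<\tfrac{\nu\lambda_1}{2}$. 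By the strong law \eqref{equ:Ezalpha} one has $\tfrac1{|s|}\int_s^0\|\ovz(\zeta)\|^4_{\mathbb{L}^4}\,d\zeta\to\E\|\ovz(0)\|^4_{\mathbb{L}^4}$ as $s\to-\infty$, so there is $S(\omega)<0$ with $\beta\int_s^0\|\ovz(\zeta)\|^4_{\mathbb{L}^4}\,d\zeta\le-\tfrac{\nu\lambda_1}{2}s$ for all $s\le S(\omega)$. Hence $\psi(s)\le\tfrac{\nu\lambda_1}{2}s$ on $(-\infty,S(\omega)]$, giving $\int_{-\infty}^{S(\omega)}e^{\psi(s)}\,ds\le\int_{-\infty}^{S(\omega)}e^{\nu\lambda_1 s/2}\,ds<\infty$; on the compact interval $[S(\omega),0]$ the map $\psi$ is continuous (the integral term is absolutely continuous since $\|\ovz\|^4_{\mathbb{L}^4}\in L^1_{\loc}$), so $e^{\psi}$ is bounded there and contributes a finite amount. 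Adding the two pieces proves the claim.

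The only genuine point to watch — the main obstacle — is that \eqref{equ:Ezalpha} controls the time average of $\|\ovz\|^4_{\mathbb{L}^4}$ only asymptotically, so the domain must be split into a tail, where the strictly negative rate $\tfrac{\nu\lambda_1}{2}$ forces exponential decay, and a bounded part, where local integrability of $\|\ovz\|^4_{\mathbb{L}^4}$ suffices. The decay of the tail hinges on the strict inequality $\beta\,\E\|\ovz(0)\|^4_{\mathbb{L}^4}<\tfrac{\nu\lambda_1}{2}$: the factor $16$ in $\beta$ set against the factor $8$ in \eqref{equ:Ezalpha1} leaves precisely the margin needed, and it is the same margin that was used to establish Lemmas~\ref{lem_8.5} and \ref{lem_8.6}.
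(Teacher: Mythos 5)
Your ``for completeness'' argument is essentially the paper's own proof: the authors set $p(s)=\nu\lambda_1 s+\frac{27C^4}{16\nu^3}\int_s^0\|\ovz(\zeta)\|^4_{\mathbb{L}^4}\,d\zeta$ and, exactly as in their proof of Lemma~\ref{lem:8_4}, use \eqref{equ:Ezalpha} and \eqref{equ:Ezalpha1} to get $p(s)\le\frac{\nu\lambda_1}{2}s$ for $s\le s_0$, whence the tail integral converges; your preliminary reduction to Lemma~\ref{lem_8.6} by pointwise domination is a valid shortcut the paper does not take. Note only that the sign $-\nu\lambda_1 s$ in the displayed statement (and in Lemma~\ref{lem_8.6}) is a typo for $+\nu\lambda_1 s$ --- otherwise the integrand is bounded below by $e^{\nu\lambda_1|s|}$ and the integral diverges --- and your proof correctly works with the intended sign.
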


\begin{proof}[Proof of Lemma~\ref{lem:8_4}]
We recall that for $\alpha \in \nN$, $\ovz(t) = \ovz_\alpha(t)$, $t \in \R$, being the
Ornstein-Uhlenbeck process from subsection~\ref{sec:OUprocess}, one has
\[
 \E\|\ovz(0)\|^4_X = \E \|\ovz_\alpha(0)\|^4_X < \frac{8\nu^4\lambda_1}{27 C^4}.
\]
Let us recall that the space $\hat{\Omega}(\xi,E)$ was constructed in such a way that
\[
 \lim_{ n^{(k)} \rightarrow \infty} \frac{1}{-k-(-t_{n^{(k)}})} \int_{t_{n^{(k)}} }^{-k}
  \| \ovz_\alpha(s)\|^4_X ds = \E\|\ovz(0)\|^4_X < \infty.
\]
Therefore, since the embedding $X \hookrightarrow \mathbb{L}^4(\bS)$ is a contraction, we have for $n^{(k)}$ sufficiently large,
\be\label{equ:8_20}
 \frac{27C^4}{16\nu^3} \int_{t_{n^{(k)}} }^{-k}\| \ovz_\alpha(s)\|^4_{\mathbb{L}^4} ds < \frac{\nu\lambda_1}{2}(t_{n^{(k)}} - k).
\ee
Since the set $B$ is bounded in $H$, there exists $\rho_1 >0$ such that for all $n^{(k)}$,
$\|\x_{n^{(k)}}\| \le \rho_1$. Hence
\be\label{equ:8_21}
 \limsup_{ n^{(k)} \rightarrow \infty} \|\x_{n^{(k)}}\|^2
\exp\left( -\nu \lambda_1 t_{n^{(k)}} +
              \frac{27C^4}{16\nu^3} \int_{-t_{n^{(k)}} }^{-k} \|\ovz(s)\|^4_{\mathbb{L}^4} ds  \right)
              \le \limsup_{ n^{(k)} \rightarrow \infty} \rho_1^2 e^{ -\frac{\nu\lambda_1}{2}(t_{n^{(k)}} - k)  } = 0.
\ee
\end{proof}

\begin{proof}[Proof of Lemma~\ref{lem:8_7}]
We denote by
\[
p(s) = \nu \lambda_1 s + \frac{27C^4}{16\nu^3} \int_s^0 \|\ovz(s)\|^4_{\mathbb{L}^4}.
\]
As in the proof of
Lemma~\ref{lem:8_4} we have, for $s\le s_0$, $p(s) < \frac{\nu \lambda_1}{2} s$. Hence $\int_{-\infty}^0 e^{p(s)}ds < \infty$,
as required.
\end{proof}

\begin{proof}[Proof of Lemma~\ref{lem:8_5}]
Because of \eqref{equ:zhatgrow}, we can find $\rho_2 \ge 0$ and $s_0 <0$, such that,
\be\label{equ:8_22}
\max\left( \frac{\|\ovz(s)\|} { |s|} , \frac{\|\ovz(s)\|_{V^\prime}}{|s|}, \frac{\|\ovz(s)\|_{\mathbb{L}^4}}{|s|} \right) \le \rho_2,
\quad \mbox{ for } s \le s_0.
\ee
Hence by \eqref{equ:8_20} we infer that
\be\label{equ:8_23}
\begin{aligned}
&\limsup_{ n^{(k)} \rightarrow \infty} \|\ovz(-t_{n^{(k)}})\|^2
       \exp \left( \int_{-t_{n^{(k)}} }^{-k} (-\nu \lambda_1
 + \frac{27C^4}{16\nu^3}\|\ovz(s)\|^4_{\mathbb{L}^4}) ds  \right) \\
& \qquad \le \limsup_{ n^{(k)} \rightarrow \infty} \frac{\|\ovz(-t_{n^{(k)}})\|^2 }{|t_{n^{(k)}}|^2}
             \limsup_{ n^{(k)} \rightarrow \infty} |t_{n^{(k)}}|^2  e^{-\frac{\nu\lambda_1}{2}( t_{n^{(k)}}-k ) }  \le 0.
\end{aligned}
\ee
This concludes the proof of Lemma~\ref{lem:8_5}.
\end{proof}

\begin{proof}[Proof of Lemma~\ref{lem:8_6}]
Since $\|\vg(s)\|^2_{V^\prime} = \|\alpha \ovz(s) + 2\B(\ovz(s))\|^2_{V^\prime} \le 2 \alpha^2 \|\ovz(s)\|^2_{V^\prime} + 2C \|\ovz(s)\|^4_{\mathbb{L}^4}$,
we only need to show that the integrals
\[
\int_{-\infty}^0 \|\ovz(s)\|^4_{\mathbb{L}^4}
\exp \left(\nu\lambda_1 s + \frac{27C^4}{16\nu^3}\int_s^0\|\ovz(\zeta)\|^4_{\mathbb{L}^4}d\zeta \right) ds
\]
and
\[
\int_{-\infty}^0 \|\ovz(s)\|^2_{V^\prime}
\exp\left(\nu\lambda_1 s + \frac{27C^4}{16\nu^3}\int_s^0\|\ovz(\zeta)\|^4_{\mathbb{L}^4}d\zeta\right) ds
\]
are finite.

It is enough to consider the case of $\|\ovz(s)\|^4_{\mathbb{L}^4}$ since the proof will be similar for the remaining case.
Reasoning as in \eqref{equ:8_20}, we can find $t_0 \ge 0$ such that for $ t \ge t_0$,
\[
 \int_{-t}^{-t_0} \left( -\nu\lambda_1 + \frac{27C^4}{6\nu^3} \|\ovz(\zeta)\|_{\mathbb{L}^4}^4   \right) d\zeta
  \le -\frac{\nu\lambda_1}{2} (t-t_0).
\]
Taking into account the inequality \eqref{equ:8_22}, we have $\|\ovz(t)\| \le \rho_2(1+|t|)$, $t \in \R$.
Therefore, with
$\rho_3 := \exp( \int_{-t_0}^0 (-\nu\lambda_1 + \frac{27C^4}{16\nu^3}
\|\ovz(\zeta)\|^4_{\mathbb{L}^4})d\zeta$, we have
\[
\begin{aligned}
&\int_{-\infty}^{-t_0} \|\ovz(s)\|^4_{\mathbb{L}^4}
   \exp\left( \int_{s}^0 (\nu\lambda_1  +
  \frac{27C^4}{16\nu^3}\|\ovz(\zeta)\|^4_{\mathbb{L}^4})d\zeta \right) ds \\
& \qquad = \rho_3 \int_{-\infty}^{-t_0} \|\ovz(s)\|^4_{\mathbb{L}^4}
   \exp\left(\int_{s}^{-t_0} (\nu\lambda_1
  + \frac{27C^4}{16\nu^3} \|\ovz(\zeta)\|^4_{\mathbb{L}^4})d\zeta\right) ds \\
& \qquad \le \rho_2^4 \rho_3 e^{\nu\lambda_1 t_0/2}  \int_{-\infty}^{t_0} |s|^4 e^{\nu\lambda_1 s/2} ds < \infty.
\end{aligned}
\]
By the continuity of all relevant functions, we can let $t_0 \rightarrow 0$ to get the result.
\end{proof}

Therefore, the proof of \eqref{equ:8_19} is concluded, and it only remains to finish the proof of \eqref{equ:8_17}.
Let us denote by
\begin{align*}
\vv_\nk(s) &= \vv(s,-k;\omega, \varphi(t_\nk - k, \vartheta_{-t_\nk}\omega) \x_\nk - \ovz(-k)), \;\; s \in (-k,0),\\
\vv_k(s) &= \vv(s,-k;\omega, \vy_{-k} - \ovz(-k)), \;\; s \in (-k,0).
\end{align*}

From \eqref{def:ymk} and the last part of Theorem \ref{thm:limit} we infer that
\be\label{equ:8_24}
\vv_{\nk} \rightarrow \vv_k \mbox{ weakly in } L^2(-k,0;V).
\ee

Since $e^{\nu\lambda_1 \cdot} \vg, \; e^{\nu \lambda_1 \cdot}
\vf \in L^2(-k,0;V^\prime)$, we get
\be\label{equ:8_25}
 \lim_{\nk \rightarrow \infty} \int_{-k}^0 e^{\nu\lambda_1 s} \langle \vg(s), \vv_{\nk}(s)\rangle ds
    = \int_{-k}^0 e^{\nu\lambda_1 s} \langle \vg(s),\vv_k(s)\rangle ds
\ee
and
\be\label{equ:8_26}
 \lim_{\nk \rightarrow \infty} \int_{-k}^0 e^{\nu\lambda_1 s} \langle \vf, \vv_{\nk}(s)\rangle ds
    = \int_{-k}^0 e^{\nu\lambda_1 s} \langle \vf,\vv_k(s)\rangle ds.
\ee

On the other hand, using the same methods as those in the proof of Theorem~\ref{thm:existence}, there
exists a subsequence of $\{\vv_\nk\}$, which, for the sake of simplicity of notation, is denoted as the
old one which satisfies
\be\label{equ:8_27}
 \vv_{\nk} \rightarrow \vv_k \mbox{ strongly in }
 L^2(-k,0;\mathbb{L}^2_{\loc}(\bS)).
\ee

Next, since $\ovz(t)$ is an $\mathbb{L}^4$-valued process, so is $e^{\nu\lambda_1 t} \ovz(t)$. Thus by
\cite[Corollary~4.1]{BrzGolLeG15},
\eqref{equ:8_24} and \eqref{equ:8_27}, we infer that
\be\label{equ:8_28}
\begin{aligned}
&\lim_{\nk \rightarrow \infty} \int_{-k}^0 e^{\nu \lambda_1 s} b(\vv_{\nk}(s), \ovz(s),\vv_{\nk}(s)) ds \\
&\qquad =  \int_{-k}^0 e^{\nu \lambda_1 s} b(\vv_k(s),\ovz(s),\vv_k(s)) ds.
\end{aligned}
\ee

Moreover, since the norms $[\cdot]$ and $\|\cdot\|_V$ are equivalent on $V$, and since for any $s \in (-k,0]$,
$e^{-\nu k \lambda_1} \le e^{\nu \lambda_1 s} \le 1$, $(\int_{-k}^0 e^{\nu \lambda_1 s} [\cdot]^2 ds)^{1/2}$
is a norm in $L^2(-k,0;V)$ equivalent to the standard one. Hence, from \eqref{equ:8_24} we obtain,
\[
\int_{k}^0 e^{\nu \lambda_1 s} [\vv_k(s)]^2 ds \le
\liminf_{\nk\rightarrow \infty} \int_{-k}^0 e^{\nu\lambda_1s} [\vv_{\nk}(s)]^2 ds.
\]
In other words,
\be\label{equ:8_29}
\limsup_{\nk \rightarrow \infty}
\left(-\int_{-k}^0 e^{\nu\lambda_1 s} [\vv_{\nk}(s)]^2 ds \right)\le - \int_{-k}^0 e^{\nu\lambda_1 s} [\vv_k(s)]^2 ds.
\ee

From \eqref{equ:8_16}, \eqref{equ:8_19}, \eqref{equ:8_28} and \eqref{equ:8_29} we infer that
\be\label{equ:8_30}
\begin{aligned}
&\limsup_{\nk \rightarrow \infty} \|\varphi(t_{\nk},\vartheta_{-t_{\nk}}\omega)\x_{\nk} - \ovz(0)\|^2 \\
&\qquad \le \int_{-\infty}^{-k} h(s) ds + 2\int_{-k}^0 e^{\nu\lambda_1 s}
\big\{  b(\vv_k(s),\ovz(s),\vv_k(s)) \\
&\qquad \qquad \qquad+ \langle \vg(s),\vv_k(s) \rangle + \langle\vf,\vv_k(s)\rangle - [\vv_k(s)]^2 \big\} ds
\end{aligned}
\ee

On the other hand, from \eqref{equ:weaklim} and \eqref{equ:8_11}, we have
\be\label{equ:8_31}
\begin{aligned}
\|\vy_0 - \ovz(0)\|^2 &= \|\varphi(k,\vartheta_{-k}\omega)\vy_k - \ovz(0)\|^2 = \|\vv(0,-k;\omega,\vy_k-\ovz(-k))\|^2 \\
 &=\|\vy_k - \ovz(-k)\|^2 e^{-\nu\lambda_1 k} + 2 \int_{-k}^0 e^{\nu \lambda_1 s} \big\{ \langle \vg(s),\vv_k(s) \rangle\\
 &+ b(\vv_k(s),\ovz(s),\vv_k(s)) + \langle \vf,\vv_k(s) \rangle - [\vv_k(s)]^2 \big\} ds.
\end{aligned}
\ee

Hence, by combining \eqref{equ:8_30} with \eqref{equ:8_31}, we get
\[
\begin{aligned}
&\limsup_{\nk\rightarrow \infty} \|\varphi(t_{\nk}, \vartheta_{-t_{\nk}}\omega)\x_{\nk} - \ovz(0)\|^2 \\
&\quad\le \int_{-\infty}^{-k} h(s) ds + \|\vy_0 - \ovz(0)\|^2 - \|\vy_k - \ovz(-k)\|^2 e^{-\nu\lambda_1 k} \\
&\quad\le \int_{-\infty}^{-k} h(s) ds + \|\vy_0 - \ovz(0)\|^2,
\end{aligned}
\]
which proves \eqref{equ:8_17}, and hence the proof of Proposition~\ref{prop:ACprop} is finished.
\end{proof}
\section{Invariant measure}
In this section we consider the existence of an invariant measure.
The main result in this section, i.e. Theorem \ref{thm-inv-measure} is a direct
consequence of Corollary 4.4 \cite{Crauel+F_1994} and  our Theorem \ref{thm-main}
about the existence of an attractor for the  RDS generated by  the stochastic Navier-Stokes equations
\eqref{sNSEs}.

Let $\varphi$ be the RDS corresponding to the SNSEs \eqref{sNSEs} and defined in \eqref{def:varphi}.
 We define the transition operator
$P_t$ by a standard formula. For $g \in \mathcal{B}_b(\mathrm{H})$, we put
\begin{eqnarray}
\label{eq-trans-sem} P_tg(\x)&=&\int_{\Omega} \left[g(\varphi(t,\omega,\x))\right] d \mathbb{P}(\omega)
,\quad \x \in {H}.
\end{eqnarray}
As in  \cite[Proposition 3.8]{BrzLi06} we have  the following result whose proof  is simply a repetition of the proof from \cite{BrzLi06}
\begin{proposition}\label{prop-Feller}
The family $(P_t)_{t \geq 0}$ is Feller, i.e. $P_tg \in C_b(H)$ if $g\in
C_b(H)$. Moreover,  for any $g\in
C_b(\mathrm{X})$,  $P_tg(\x) \to g(\x)$ as $t\todown 0$.
\end{proposition}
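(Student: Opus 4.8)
The plan is to obtain both assertions from the continuous dependence of solutions on the initial datum (the last part of Theorem~\ref{thm:limit}) combined with the Lebesgue Dominated Convergence Theorem; no new estimate is required, and the argument is exactly the one given in \cite{BrzLi06}. Fix $t \ge 0$ and $g \in C_b(H)$. Boundedness of $P_t g$ is immediate, since $|P_t g(\x)| \le \int_\Omega |g(\varphi(t,\omega,\x))|\, d\bbP(\omega) \le \|g\|_\infty$ for every $\x \in H$, so the only substantive point is continuity.

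To prove continuity of $\x \mapsto P_t g(\x)$, I would take a sequence $\x_n \to \x$ in $H$ and argue $\omega$ by $\omega$. For fixed $\omega$ the path $\ovz(\omega)$ is frozen, and by the definition \eqref{def:varphi}
\[
\varphi(t,\omega,\x_n) = \vv\big(t,\ovz(\omega),\x_n - \ovz(\omega)(0)\big) + \ovz(\omega)(t),
\]
so that the only $n$-dependent object is the initial velocity $\x_n - \ovz(\omega)(0) \to \x - \ovz(\omega)(0)$ in $H$. Applying Theorem~\ref{thm:limit} with the noise path and the force held fixed then gives $\varphi(t,\omega,\x_n) \to \varphi(t,\omega,\x)$ in $H$, whence $g(\varphi(t,\omega,\x_n)) \to g(\varphi(t,\omega,\x))$ by continuity of $g$. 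Since these integrands are dominated by the constant $\|g\|_\infty \in L^1(\Omega,\bbP)$, Dominated Convergence yields $P_t g(\x_n) \to P_t g(\x)$, and therefore $P_t g \in C_b(H)$.

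For the final statement I would fix $\x$ and use that $\varphi$ is a time-continuous cocycle with $\varphi(0,\omega,\x) = \x$. Since $\vv(\cdot,\ovz(\omega),\cdot) \in C([0,\infty);H)$ and $\ovz(\omega) \in C(\R;X)$, the trajectory $s \mapsto \varphi(s,\omega,\x)$ is continuous, so $\varphi(t,\omega,\x) \to \x$ as $t \todown 0$ for each $\omega$; continuity of $g$ together with the same uniform bound $\|g\|_\infty$ then give $P_t g(\x) \to g(\x)$ by Dominated Convergence. The case $g \in C_b(X)$ is handled identically, using that $\ovz(\omega) \in C(\R;X)$ makes the trajectory converge to $\x$ in the $X$-topology as $t\todown 0$.

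There is no serious analytical obstacle: the whole proof reduces to passing a pointwise-in-$\omega$ limit under the expectation, which is licensed by the uniform bound $\|g\|_\infty$. The only places requiring care are, first, invoking Theorem~\ref{thm:limit} in the correct form — noise path $\ovz(\omega)$ and force $\vf$ fixed, initial velocity varying — after observing that fixing $\omega$ freezes $\ovz(\omega)$, and, second, for the $t\todown 0$ statement with $g \in C_b(X)$, checking that $s \mapsto \varphi(s,\omega,\x)$ is continuous in the topology in which $g$ is continuous, which follows from the regularity of $\vv$ and $\ovz$.
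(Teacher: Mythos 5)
Your proof is correct and is essentially the paper's own argument: the paper gives no details here, simply stating that the proof "is a repetition of the proof from \cite{BrzLi06}" (Proposition 3.8 there), and that proof is precisely the continuity-in-initial-data (Theorem~\ref{thm:limit}) plus dominated convergence argument you spell out. The one caveat is that the ``$g\in C_b(\mathrm{X})$'' in the last assertion is surely a notational slip for $C_b(H)$ inherited from the cited reference, so your attempt to upgrade the trajectory's continuity to the $\mathbb{L}^4(\bS)\cap H$ topology --- which the stated regularity $\vv\in C([0,\infty);H)$ does not support --- is not needed and, as written, not justified.
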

Following \cite{Crauel+F_1994} one can prove that $\varphi$ is a Markov RDS,
i.e. $P_{t+s}=P_tP_s$ for all $t,s \geq 0$. Hence by \cite[Corollary 3.10]{BrzLi06} which says that
 a time-continuous and continuous   asymptotically compact, Markov RDS  $\varphi$  admits a Feller  invariant probability measure
 $\mu$, i.e. a Borel probability measure $\mu$
\begin{eqnarray}
\label{eq-inv-meas}
P_t^\ast\mu=\mu, \; t \ge 0,
\end{eqnarray}
 where
 \[P_t^\ast \mu (\Gamma)=\int_\mathrm{H}P_t(x,\Gamma)
\, \mu(dx),\quad \Gamma \in \mathcal{B}(\mathrm{H}),\]
 and $P_t(x,\cdot)$ is the
transition probability, $P_t(x,\Gamma)= P_t1_\Gamma(x)$, $x \in H$. \\
A Feller invariant probability measure for a Markov RDS $\varphi$ on $\mathrm{H}$ is,
by definition, an invariant probability measure for the semigroup $(P_t)_{t \geq 0}$
defined by \eqref{eq-trans-sem}. Therefore, we obtain the following result.

\begin{theorem}\label{thm-inv-measure} There exists an invariant
measure for the stochastic NSE \eqref{sNSEs}.
\end{theorem}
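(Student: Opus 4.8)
The plan is to obtain this existence result as a direct application of the abstract theory linking random attractors to invariant measures of the associated Markov transition semigroup, assembling the pieces that have already been established. First I would record the four structural properties of the RDS $\varphi$ from \eqref{def:varphi} that the abstract result requires: (a) $\varphi$ is continuous, which follows from the continuous dependence on data in \cite[Theorem 6.1]{BrzGolLeG15} (see also Theorem~\ref{thm:limit}); (b) $\varphi$ is time-continuous, since its trajectories lie in $C([s,\infty);H)$; (c) $\varphi$ is a Markov RDS, i.e. the transition operators $(P_t)_{t\ge 0}$ of \eqref{eq-trans-sem} satisfy $P_{t+s}=P_tP_s$; and (d) $\varphi$ is asymptotically compact, which is precisely Theorem~\ref{thm-main}(ii).

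Next I would verify the Feller and Markov properties explicitly. The Feller property, together with the right-continuity $P_tg(\x)\to g(\x)$ as $t\todown 0$, is exactly the content of Proposition~\ref{prop-Feller}. For the Markov property, following Crauel and Flandoli \cite{Crauel+F_1994}, one combines the cocycle identity $\varphi(t+s,\omega)=\varphi(t,\vartheta_s\omega)\circ\varphi(s,\omega)$ with the fact that $w_t$ is a two-sided Wiener process with independent increments, so that $\varphi(t,\vartheta_s\omega)$ is independent of $\calF_s$; taking expectations in \eqref{eq-trans-sem} then yields $P_{t+s}=P_tP_s$.

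With these hypotheses in hand, the conclusion follows from the abstract existence theorem. By \cite[Corollary 3.10]{BrzLi06} (equivalently Corollary 4.4 of \cite{Crauel+F_1994}), a time-continuous, continuous, asymptotically compact Markov RDS admits a Feller invariant probability measure $\mu$, i.e. a Borel probability measure on $H$ satisfying $P_t^\ast\mu=\mu$ for all $t\ge 0$. Since $\varphi$ possesses a random attractor by Theorem~\ref{thm-main}, all the hypotheses are met, and such a $\mu$ is by definition an invariant measure for the stochastic NSE \eqref{sNSEs}.

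I do not expect any genuine obstacle here: the substantive work, namely the $\mathfrak{DR}$-asymptotic compactness of $\varphi$, has already been carried out in Theorem~\ref{thm-main} and Proposition~\ref{prop:ACprop}. The only point requiring mild care is confirming that the hypotheses of the abstract result are met in the present infinite-dimensional, irregular-noise setting, but this is routine: the Feller property is supplied by Proposition~\ref{prop-Feller}, and the Markov property is inherited directly from the additive-noise structure of \eqref{sNSEs}.
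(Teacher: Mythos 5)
Your proposal follows exactly the paper's own route: it assembles the Feller property from Proposition~\ref{prop-Feller}, the Markov property via the cocycle identity and independence of increments as in Crauel--Flandoli, and the asymptotic compactness from Theorem~\ref{thm-main}, then invokes \cite[Corollary 3.10]{BrzLi06} (equivalently Corollary 4.4 of \cite{Crauel+F_1994}) to produce a Feller invariant measure. This is correct and matches the paper's argument in both structure and the abstract results cited.
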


\begin{remark}\label{rem-inv-measure} We believe that the  uniqueness of an invariant measure for nondegenerate noise will follow from the classical procedure based on Doob's Theorem, see e.g. Seidler's paper \cite{Seidler_1997} and references therein. If the noise is degenerate and spatially smooth, it seems that the results from a recent paper by Hairer and Mattingly \cite{Hairer+Matt_2011} should  be applicable
in our setting. In particular, \cite[Theorem 8.4]{Hairer+Matt_2011},  which gives a sufficient
conditions for uniqueness in terms of controllability, should be applicable. Details
will be published elsewhere. One should point out that these authors use the "vorticity" formulation and their initial data belongs to
the $L^2$ space.  This corresponds to our approach with the initial
data belonging to the finite enstrophy space $H^1$. However, we work in the space of finite energy, which seems to be physically more natural.   On the other hand,
verifying the  sufficient conditions could be more challenging. For
the NSE without the Coriolis force this problems has been
investigated in  \cite{Agra+Sar_2008}. Corresponding NSE with the
Coriolis force study is postponed till the next publication.
\end{remark}


\end{document}